\documentclass[a4paper,10pt]{article}

\usepackage{amsmath, amsthm, amssymb,  amsfonts, amscd, color, tikz}

\definecolor{extlinkz}{rgb}{0,0,1}
\definecolor{intlinkz}{rgb}{1,0,0}
\definecolor{citlinkz}{rgb}{0,.6,.1}
\usepackage[pdfpagelabels,plainpages=false,colorlinks=true,citecolor=citlinkz,linkcolor=intlinkz,urlcolor=extlinkz,urlbordercolor={0 0 0}]{hyperref}

\allowdisplaybreaks

\newtheorem{theo}{Theorem}[section]

\newtheorem{lem}[theo]{Lemma}
\newtheorem{prop}[theo]{Proposition}
\newtheorem{cor}[theo]{Corollary}
\newtheorem{rem}[theo]{Remark}

\newtheorem{TheoA}{Theorem A}
\newtheorem{TheoB}{Theorem B}
\newtheorem{DEF}{Definition}

\newcommand{\R}{\mathbb{R}}
\newcommand{\C}{\mathbb{C}}
\newcommand{\Z}{\mathbb{Z}}
\newcommand{\T}{{\mathbb{T}}}
\newcommand{\Heis}{\mathbb{H}}

\newcommand{\Rn}{{\mathbb{R}^n}}
\newcommand{\Zn}{{\mathbb{Z}^n}}
\newcommand{\Tn}{{\mathbb{T}^n}}

\newcommand{\M}{\mathcal{M}}
\newcommand{\meas}{\mathcal{X}}

\newcommand{\G}{\mathbb{G}}
\newcommand{\vn}{\textnormal{vNa}}
\newcommand{\B}{\mathcal{B}}

\newcommand{\Hil}{\mathcal{H}}
\newcommand{\HD}{\ell_2(\Gamma)}

\newcommand{\id}{{\mathrm{e}}}
\newcommand{\Id}{\mathbf{1}}
\newcommand{\Proj}{\mathbf{P}}

\newcommand{\F}{\mathcal F}
\newcommand{\lr}{\lambda}
\newcommand{\act}{\sigma}
\newcommand{\ract}{\,\,{}_{\act}\!\!\rtimes}

\newcommand{\wh}[1]{\widehat{#1}}
\newcommand{\ol}[1]{\overline{#1}}

\newcommand{\Ran}{\mathcal{R}}
\newcommand{\Ker}{\textnormal{Ker}}
\newcommand{\vsp}{\textnormal{span}}

\newcommand{\psis}[1]{\langle #1\rangle}

\newcommand{\Frame}{\mathfrak{F}}

\newcommand{\ind}{\mathcal{I}}

\title{Riesz and frame systems generated by\\ unitary actions of discrete groups}
\author{D. Barbieri, E. Hern\'andez, J. Parcet}

\begin{document}

\maketitle

\begin{abstract}

We characterize orthonormal bases, Riesz bases and frames which arise from the action of a countable discrete group $\Gamma$ on a single element $\psi$ of a given Hilbert space $\Hil$. As $\Gamma$ might not be abelian, this is done in terms of a bracket map taking values in the $L^1$-space associated to the group von Neumann algebra of $\Gamma$. Our result generalizes recent work for LCA groups in \cite{HSWW10}. In many cases, the bracket map can be computed in terms of a noncommutative form of the Zak transform.
\let\thefootnote\relax\footnote{\hspace{-15pt}\textit{2010 Mathematics Subject Classification}: 42C15, 47A15, 43A65, 46L52\\
\textit{Keywords}: Shift-invariant spaces, Riesz bases, frames, noncommutative harmonic analysis, von Neumann algebras, group theory. }

\end{abstract}

{
\hypersetup{linkcolor=black}
\tableofcontents
}

\section{Introduction}

Shift invariant subspaces are closed subspaces of the Hilbert space $L^2(\R^d)$ that are invariant under translations by elements of $(\Z^d , +)$. They have been the main subject of an increasing number of works in the last twenty five years due to their role in approximation theory \cite{Mallat89, Meyer93, BoorVoreRon93, BoorVoreRon94, RonShen95, BenedettoWalnut94, BenedettoLi98, Bownik00, HLWW02, RonShen05, HSWW10, CabrelliPaternostro10, BHM13}. Shift invariant spaces generated by the system of translations of a single function are called principal and play a fundamental role. Translations in $L^2(\R^d)$ by elements of $\mathbb{Z}^d$ are just an example of many other transformations given by the action of a discrete group $\Gamma$ in a Hilbert space $\mathcal{H}$. Translations, modulations or dilations in Euclidean spaces are standard actions in the construction of wavelets, Gabor frames and multiresolution analysis. All these actions arise from abelian groups and only the combination of them might result in a noncommutative action. The aim of this paper is to analyze the properties of principal shift invariant spaces in a Hilbert space $\mathcal{H}$ given by the action of an arbitrary countable discrete group $\Gamma$. To be precise, let
$$
\gamma \in \Gamma \mapsto T(\gamma) \in U(\Hil)
$$
be a unitary representation of $\Gamma$ in $\Hil$, and for $\psi \in \Hil$ set
$$
\psis{\psi} = \ol{\vsp\left\{T(\gamma) \psi \right\}_{\gamma \in \Gamma}}^{\Hil}.
$$
When does $\left\{T(\gamma) \psi \right\}_{\gamma \in \Gamma}$ become an orthonormal basis of $\psis{\psi}$? Similarly, when is it a Riesz basis or just a frame of the corresponding invariant space $\psis{\psi}$? In this paper, we will answer these questions in terms of the properties of an operator which takes values in the $L^1$-space associated to the group von Neumann algebra of $\Gamma$. This problem was already addressed for integer translations in $\R^d$ -- see \cite{Mallat89, Meyer93, Daubechies92} for orthonormal and Riesz bases and \cite{BenedettoWalnut94, BenedettoLi98} for frames -- and more general actions of locally compact abelian (LCA) groups studied in \cite{HSWW10}. Our solution follows the spirit of the bracket map introduced in \cite{HSWW10} and defined as follows. 

Let $\G$ be an LCA group, let $\wh{\G}$ be its Pontryagin dual and denote by $d\alpha$ the Haar measure on $\wh{\G}$. A unitary representation $T: \G \to U(\mathcal{H})$ is said to be dual
integrable if there exists a sesquilinear map
$$
[\cdot , \cdot] : \Hil \times \Hil \rightarrow L^1(\wh{\G}, d\alpha)
$$
such that
$$
\big\langle \varphi, T(g) \psi \big\rangle_{\Hil} = \int_{\widehat{\G}} [\varphi, \psi](\alpha) \overline{\alpha(g)} d\alpha \quad \forall \ \varphi, \psi \in \Hil .
$$

\noindent Let us illustrate this definition with two basic examples:
\begin{itemize}
\item In the case of integer translations $T_k f(x) = f(x + k)$ -- which yields a unitary representation of $(\Z^d,+)$ in $L^2(\R^d)$ -- the bracket map corresponds to the following periodization map
\begin{equation}\label{eq:integertranslates}
[\varphi, \psi](\xi) = \sum_{l \in \Z^d} \wh{\varphi}(\xi + l)\overline{\wh{\psi}(\xi + l)}
\end{equation}
where $\wh{\psi}$ is the Fourier transform of $\psi$, $\xi \in \T^d \simeq [0,1)^d$ and $\varphi, \psi \in L^2(\R^d)$ (see \cite{BoorVoreRon93, BoorVoreRon94, BenedettoLi98}).

\item It also includes other cases, such as the Gabor representation of the group $(\Z^d \times \Z^d,+)$ on $L^2(\R^d)$ given by $M_l T_k \psi(x) = e^{-2\pi i l\cdot x} \psi(x - k)$. In this case $[\varphi, \psi](x,\xi) = Z\varphi(x,\xi)\overline{Z\psi(x,\xi)}$, where $x,\xi \in \T^d$, $\varphi, \psi \in L^2(\R^d)$ and
$$
Z\psi(x,\xi) = \sum_{k \in \Z^d}  \psi(k + x)\, e^{-2\pi i k \cdot \xi}
$$
is the Zak transform of the function $\psi \in L^2(\R^d)$ (see for instance \cite{HeilPowell06}).
\end{itemize}

\noindent Given a countable index set $\mathcal{I}$, recall that the system $\big\{ \psi_j \big\}_{j \in \mathcal{I}}$ is a Riesz basis for $\ol{\vsp\big\{ \psi_j \big\}_{j \in \mathcal{I}}}^\Hil$ with constants $0 < A \le B < \infty$ when
$$
A \|c\|^2_{\ell_2(\ind)} \leq \big\| \sum_{j \in \ind} c_j \psi_j \big\|^2_{\Hil} \leq B \|c\|^2_{\ell_2(\ind)}
$$
for all $c = \{c_j\}_{j \in \ind} \in \ell_2(\ind)$. It is a frame for $\ol{\vsp\big\{ \psi_j \big\}_{j \in \mathcal{I}}}^\Hil$ with constants $0 < A \le B < \infty$ when
$$
A \|\varphi\|^2_{\Hil} \leq \sum_{j \in \ind} |\langle \varphi, \psi_j \rangle_{\Hil} |^2 \leq B \|\varphi\|^2_{\Hil}
$$
for all $\varphi \in \ol{\vsp\big\{ \psi_j \big\}_{j \in \mathcal{I}}}^\Hil$. The main results of \cite{HSWW10} can then be summarized as follows. Let $\Gamma$ be a countable discrete abelian group and let $T: \Gamma \to U(\mathcal{H})$ be a dual integrable representation of $\Gamma$. Then $\{T(\gamma) \psi\}_{\gamma \in \Gamma}$ is
\begin{description}
\item[i)] An orthonormal basis for $\psis{\psi}$ if and only if
$$
[\psi,\psi](\alpha) = 1 \quad \mbox{for} \quad \mbox{a.e.} \ \alpha \in \wh{\G}.
$$
\item[ii)] A Riesz basis for $\psis{\psi}$ with frame bounds $0 < A \leq B < \infty$ if and only if
$$
A \leq [\psi,\psi](\alpha) \leq B \quad \mbox{for} \quad \mbox{a.e.} \ \alpha \in \wh{\G}.
$$
\item[iii)] A frame for $\psis{\psi}$ with frame bounds $0 < A \leq B < \infty$ if and only if
$$
A \leq [\psi,\psi](\alpha) \leq B \quad \mbox{for} \quad \mbox{a.e.} \ \alpha \in \textnormal{supp}[\psi,\psi].
$$
\end{description}

The characterization of shift invariant spaces in LCA groups was also addressed in \cite{CabrelliPaternostro10}, in terms of the range function instead of the bracket map. Such tool is strictly related, as shown in \cite{Bownik00}, to the Gramian analysis of \cite{RonShen95, RonShen05}.

A characterization in terms of range functions for translations given by the left regular representation
of nilpotent Lie groups whose irreducible representations are square integrable modulo the center, also called $SI/Z$ Lie groups, has been recently announced in \cite{CMO}, and a partial generalization to such noncommutative setting of the characterization in terms of the bracket map was recently given in \cite{BHM13}. Both results make use of the notion of Fourier transform in terms of the unitary dual of the nonabelian group under consideration. 
\newpage
In order to exploit all the information carried by the group action -- not just its center -- we recall that the compact dual of a nonabelian discrete group can only be understood as a quantum group whose underlying space is a \emph{group von Neumann algebra}, which replaces the former role of Pontryagin duality. This general setting is widely accepted and very well understood in noncommutative geometry and operator algebra \cite{Connes94, BrownOzawa08}. Let $\Gamma$ be a discrete group and $\lr : \Gamma \to \B(\HD)$ its left regular representation given by $\lr(\gamma) \delta_{\gamma'} = \delta_{\gamma\gamma'}$, where the $\delta_\gamma$'s form the unit vector basis of $\HD$. Write $\vn(\Gamma)$ for its group von Neumann algebra, the weak operator closure of the linear span of $\{\lr(\gamma)\}_{\gamma \in \Gamma}$. Given $F \in \mathrm{vNa}(\Gamma)$, we consider the standard normalized trace $\tau(F) = \langle F \delta_\id, \delta_\id \rangle$ where $\id$ denotes the identity element of $\Gamma$. Any such element $F$ has a \emph{Fourier series} 
$$
\sum_{\gamma \in \Gamma} \wh{F}(\gamma) \lr(\gamma) \quad \mbox{with} \quad \wh{F}(\gamma) = \tau(F \lr(\gamma^{-1})) \quad \mbox{so that} \quad \tau(F) = \wh{F}(e).
$$
Let $L^p(\mathrm{vNa}(\Gamma))$ denote the $L^p$ space over the noncommutative measure space $(\mathrm{vNa}(\Gamma), \tau)$ -- so called noncommutative $L^p$ spaces -- equipped with the following norm
$$
\|F\|_p = \Big\| \sum_{\gamma \in \Gamma} \wh{F}(g) \lr(g) \Big\|_p = \Big( \tau \Big[ \hskip1pt \Big| \sum_{\gamma \in \Gamma} \wh{F}(\gamma) \lr(\gamma) \Big|^p \hskip1pt \Big] \Big)^\frac1p = \tau(|F|^p)^\frac1p
$$
where the absolute value $|F| = (F^*F)^{\frac12}$ and the power $p$ are obtained from functional calculus for this (unbounded) operator on the Hilbert space $\HD$. Also set $L^\infty(\mathrm{vNa}(\Gamma)) = \mathrm{vNa}(\Gamma) \subset \B(\HD)$.
We recall that, when $\Gamma$ is abelian,
$$
L^p(\mathrm{vNa}(\Gamma)) = L^p(\widehat{\Gamma})
$$
after identifying $\lr(\gamma)$ with the character $\alpha(\gamma): \widehat{\Gamma} \to \mathbb{T}$ associated to $\gamma \in \Gamma$. There are more explicit descriptions of $L^p(\vn(\Gamma))$ as a space of (so called) measurable operators with finite $L^p$-norm. Noncommutative $L^p$ spaces are Banach spaces for $1 \le p \le \infty$ which share many properties -- also present some pathologies -- with their commutative/classical relatives. This includes duality, H\"older inequalities, real and complex interpolation and Clarkson inequalities among others. We refer to \cite{Nelson74} for an introduction and to \cite{PisierXu03} for an excellent survey paper.
Although these are not closely related subjects, von Neumann algebras have appeared before in the study of orthonormal systems generated by unitary subsets of operators in a Hilbert space. The best references we know are \cite{DaiLarson98} and the survey article \cite{Larson97}. Using this language from noncommutative harmonic analysis, we may generalize the definition of the bracket map defined above to include nonabelian discrete groups.

\begin{DEF}
A unitary representation $T: \Gamma \to U(\mathcal{H})$ of a countable discrete group $\Gamma$ will be called \emph{dual integrable} whenever there exists a map $[\cdot, \cdot] : \Hil \times \Hil \to L^1(\vn(\Gamma))$ such that
\begin{equation}\label{eq:bracketproperty}
\big\langle \varphi , T(\gamma) \psi \big\rangle_{\Hil} = \tau \big( [\varphi,\psi] \lr(\gamma)^* \big)
\end{equation}
holds for every $\varphi, \psi \in \Hil$ and all $\gamma \in \Gamma$.
\end{DEF}

We can now state the main result of this paper.

\begin{TheoA}
Let $\Gamma$ be a countable discrete group acting on some Hilbert space $\mathcal{H}$ via a dual integrable representation $T: \Gamma \to U(\mathcal{H})$. Let $\Id$ stand for the identity operator on $\HD$ and let $\Proj_F$ denote the orthogonal projection in $\HD$ onto $(\Ker F)^\perp$ for any densely defined operator $F$ on $\HD$. Then, given any $\psi \in \Hil$, the system $\{T(\gamma)\psi\}_{\gamma \in \Gamma}$ is
\begin{description}
\item[i)] An orthonormal basis for $\psis{\psi}$ if and only if $[\psi,\psi] = \Id$.
\item[ii)] A Riesz basis for $\psis{\psi}$ with frame bounds $0 < A \leq B < \infty$ if and only if
$$
A \Id \leq [\psi,\psi] \leq B \Id.
$$
\item[iii)] A frame for $\psis{\psi}$ with frame bounds $0 < A \leq B < \infty$ if and only if
$$
A \Proj_{[\psi,\psi]} \leq [\psi,\psi] \leq B \Proj_{[\psi,\psi]} .
$$
\end{description}
\end{TheoA}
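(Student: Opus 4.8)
The plan is to reduce all three assertions to spectral data of the single operator $[\psi,\psi]$, transporting everything from $\Hil$ to $L^2(\vn(\Gamma))$ via the Plancherel unitary $\HD\cong L^2(\vn(\Gamma))$, $\delta_\gamma\leftrightarrow\lr(\gamma)$. The cornerstone is the following \emph{master identity}: for a finitely supported $c=(c_\gamma)_{\gamma\in\Gamma}$ and $a=\sum_\gamma c_\gamma\lr(\gamma)\in\vn(\Gamma)$, the bracket property \eqref{eq:bracketproperty}, the relation $T(\gamma)^*T(\gamma')=T(\gamma^{-1}\gamma')$ and traciality of $\tau$ yield
$$
\Big\|\sum_{\gamma}c_\gamma\,T(\gamma)\psi\Big\|_\Hil^2 \;=\; \sum_{\gamma,\gamma'}c_\gamma\,\overline{c_{\gamma'}}\,\big\langle T(\gamma)\psi,T(\gamma')\psi\big\rangle_\Hil \;=\; \tau\big([\psi,\psi]\,a^*a\big) \;=\; \big\langle R_{[\psi,\psi]}a,a\big\rangle_{L^2(\vn(\Gamma))},
$$
where $R_b$ denotes right multiplication by $b$. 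Choosing $c=\delta_\id$ already forces $[\psi,\psi]\ge 0$ in $L^1(\vn(\Gamma))$, so $[\psi,\psi]^{1/2}\in L^2(\vn(\Gamma))$ and the right‑hand side also equals $\|a\,[\psi,\psi]^{1/2}\|_2^2$. Polarizing, the identity shows that, under the Plancherel unitary, the Gramian $G=C_\psi C_\psi^*$ of the system $\{T(\gamma)\psi\}$ (with $C_\psi$ the analysis operator and $C_\psi^*$ the synthesis operator) is the densely defined positive operator $R_{[\psi,\psi]}$ --- bounded exactly when the system is Bessel --- and that $\Ker G$ corresponds to $\{a:a\,[\psi,\psi]=0\}$, so the orthogonal projection onto $(\Ker G)^\perp$ is $R_q$, with $q$ the support projection of $[\psi,\psi]$.

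Granting this, elementary Hilbert‑space frame theory on $\HD\cong L^2(\vn(\Gamma))$ gives the three criteria. The system $\{T(\gamma)\psi\}$ is a Riesz basis of $\psis{\psi}$ with bounds $A,B$ iff the synthesis operator is bounded below and above by $\sqrt A,\sqrt B$, i.e. iff $A\,\|a\|_2^2\le\langle R_{[\psi,\psi]}a,a\rangle\le B\,\|a\|_2^2$ on the group algebra, equivalently $A\le R_{[\psi,\psi]}\le B$. It is orthonormal --- hence automatically an orthonormal basis of $\psis{\psi}$ --- iff $\langle\psi,T(\gamma)\psi\rangle=\wh{[\psi,\psi]}(\gamma)=\delta_{\gamma,\id}$ for all $\gamma$, i.e. iff $[\psi,\psi]=\Id$, since Fourier coefficients separate $L^1(\vn(\Gamma))$. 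And it is a frame of $\psis{\psi}$ with bounds $A,B$ iff its frame operator $C_\psi^*C_\psi$ on $\psis{\psi}$ satisfies $A\,I\le C_\psi^*C_\psi\le B\,I$, which by the standard equivalence with the Gramian reads $A\,R_q\le R_{[\psi,\psi]}\le B\,R_q$ on $\HD$.

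It remains to convert these estimates for the right‑multiplication operator $R_{[\psi,\psi]}$ on $L^2(\vn(\Gamma))$ into the operator inequalities for $[\psi,\psi]$ acting on $\HD$, as stated in Theorem~A. This is a transfer lemma: the modular conjugation $J$ --- the involutive antiunitary $J(\sum_\gamma c_\gamma\delta_\gamma)=\sum_\gamma\overline{c_\gamma}\delta_{\gamma^{-1}}$ on $\HD$, equivalently $a\mapsto a^*$ on $L^2(\vn(\Gamma))$ --- conjugates right multiplication by a self‑adjoint element affiliated to $\vn(\Gamma)$ into left multiplication by it; since $J$ preserves positivity, the order relation, support projections and the spectrum, one obtains $A\,R_q\le R_{[\psi,\psi]}\le B\,R_q$ iff $A\,\Proj_{[\psi,\psi]}\le[\psi,\psi]\le B\,\Proj_{[\psi,\psi]}$ on $\HD$, and likewise with $\Id$ in place of the projections, where $\Proj_{[\psi,\psi]}=q$. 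Moreover both the ``if'' and the ``only if'' implications start by noting that the relevant upper bound ($[\psi,\psi]\le B\,\Proj_{[\psi,\psi]}$ on one side, the Bessel bound $R_{[\psi,\psi]}\le B$ on the other) forces $[\psi,\psi]\le B\,\Id$, i.e. the a priori merely $L^1$ element $[\psi,\psi]$ lies in $\vn(\Gamma)$; this is exactly the regularity that makes the bounded‑operator arguments legitimate. Combining the master identity, the three criteria and the transfer lemma proves i), ii) and iii).

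The main obstacle is not the frame bookkeeping but the master identity together with the transfer lemma: one has to make rigorous the manipulations with $[\psi,\psi]^{1/2}\in L^2(\vn(\Gamma))$ and with unbounded operators affiliated to $\vn(\Gamma)$ under the trace, prove the conjugation statement relating $R_{[\psi,\psi]}$ and $[\psi,\psi]$ (including the behaviour of kernels and support projections), and establish the a priori passage $L^1(\vn(\Gamma))\to\vn(\Gamma)$ forced by a Bessel bound. This operator‑algebraic dictionary is precisely what is invisible in the abelian setting of \cite{HSWW10}, where the bracket is scalar‑valued and multiplication by it is one and the same operator whichever side it acts on.
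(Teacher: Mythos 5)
Your overall architecture --- the quadratic-form identity $\|\sum_\gamma c_\gamma T(\gamma)\psi\|^2_\Hil = \tau(a^*a\,[\psi,\psi])$, the reduction of ii) and iii) to Gramian bounds on $\ell_2(\Gamma)$, and the transfer from right to left multiplication via the modular conjugation --- is sound, but one step as written is false, and it is precisely the step that hides the real content. You claim that choosing $c=\delta_\id$ in the master identity ``already forces $[\psi,\psi]\ge 0$''. It does not: $c=\delta_\id$ gives $a=\Id$ and yields only $\tau([\psi,\psi])=\|\psi\|^2_\Hil\ge 0$, and a self-adjoint element of $L^1(\vn(\Gamma))$ with nonnegative trace need not be positive. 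What the master identity actually gives is $\tau(a^*a\,[\psi,\psi])\ge 0$ for every trigonometric polynomial $a$, and passing from this to positivity of $[\psi,\psi]$ (equivalently to $\tau(q\,[\psi,\psi])\ge 0$ for every projection $q\in\vn(\Gamma)$) requires a density argument; the paper isolates this as Lemma \ref{lem:Kaplansky}, proved via the Kaplansky density theorem and normality of $\tau$. The same lemma is exactly what converts your quadratic-form inequalities $A\|a\|_2^2\le\tau(a^*a\,[\psi,\psi])\le B\|a\|_2^2$, known a priori only on the weak-$*$ dense subalgebra of trigonometric polynomials, into operator inequalities for the a priori unbounded $L^1$ element $[\psi,\psi]$. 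This is part of the ``dictionary'' you defer to the end, but without it neither $[\psi,\psi]^{1/2}$ nor $R_{[\psi,\psi]}$ as a positive operator is available, so the gap is load-bearing rather than cosmetic.

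Modulo that lemma, your route is genuinely different from the paper's in parts ii) and iii). The paper never forms the Gramian as an operator: for ii) it shows directly that the Riesz inequalities are equivalent to $A\|F\|_2^2\le\tau(|F|^2[\psi,\psi])\le B\|F\|_2^2$ and then to $A\Id\le[\psi,\psi]\le B\Id$ (Lemma \ref{lem:Kaplansky} in one direction, a factorization $B\Id-[\psi,\psi]=x^*x$ in the other); for iii) it builds the isometry $S_\psi:\psis{\psi}\to L^2(\vn(\Gamma),[\psi,\psi])$ of Proposition \ref{prop:isometry} and uses Plancherel to identify $\sum_\gamma|\langle\varphi,T(\gamma)\psi\rangle_\Hil|^2$ with $\tau(|F|^2[\psi,\psi]^2)$, reducing the frame condition to $A\,\tau(|F|^2[\psi,\psi])\le\tau(|F|^2[\psi,\psi]^2)\le B\,\tau(|F|^2[\psi,\psi])$ and then manipulating support projections. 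Your version buys a cleaner conceptual picture --- everything becomes spectral data of a single operator, and conjugation by $J$ is an elegant explanation of why the support projection $\Proj_{[\psi,\psi]}$ appears --- at the price of having to justify the frame/Gramian equivalence for an a priori unbounded, densely defined Gramian and the identification of $\Ker R_{[\psi,\psi]}$ with $L^2(\vn(\Gamma))(\Id-q)$, which is essentially the content of Proposition \ref{prop:isometry} in different clothing. Part i) coincides with the paper's argument.
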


After Theorem A, it is essential to construct dual integrable representations to illustrate it. Let us explain how to do it in the concrete framework of unitary representations given by measurable actions $\sigma: \Gamma \curvearrowright (\meas,\mu)$. The action $\sigma$ is called \emph{quasi-$\Gamma$-invariant} when $\mu_\gamma(E) := \mu(\sigma_\gamma(E))$ is absolutely continuous with respect to $\mu$ and admits a positive Radon-Nikodym derivate $J_\sigma(\gamma,\cdot)$ for all elements $\gamma \in \Gamma$. In that case, we may define 
\begin{itemize}
\item The \emph{associated unitary representation}
$$
T_{\sigma}(\gamma) \psi (x) : = J_\sigma(\gamma^{-1}, x)^{\frac12} \psi(\sigma_{\gamma^{-1}} x).\vspace{-6pt}
$$
\item The \emph{noncommutative Zak transform}, defined as the measurable field of operators over $\meas$
% $$
% Z_\sigma: L^2(\meas,\mu) \to L^2(\meas,\mu; L^2(\mathrm{vNa}(\Gamma))),
% $$
given by the formula $\displaystyle Z_\sigma[\psi](x) := \sum_{\gamma \in \Gamma} \Big( T_{\sigma}(\gamma) \psi (x) \Big) \lr(\gamma)^*$.\vspace{-6pt}
\item The \emph{tiling property}. We say that the action $\sigma$ is tiling when there exists a $\mu$-measurable set $C \subset \meas$ such that the family $\{\sigma_\gamma(C)\}_{\gamma \in \Gamma}$ is a disjoint covering of $\meas$ up to a set of zero $\mu$-measure.
\end{itemize}

\begin{TheoB}
Let $\act$ be a quasi-$\Gamma$-invariant action of the countable discrete group $\Gamma$ on the measure space $(\meas,\mu)$, and let $T_\act$ be the associated unitary representation on $L^2(\meas,\mu)$. If $\act$ has the tiling property with tiling set $C$, then
\begin{description}
\item[i)] \ The Zak transform $Z_\sigma$ defines an isometry
$$
Z_\act : L^2(\meas,\mu) \to L^2((C,\mu),L^2(\vn(\Gamma)))
$$
\hspace{-12pt} and satisfies the quasi-periodicity condition $Z_\act[T_\act(\gamma)\psi] = \lr(\gamma) Z_\act[\psi]\,.$
% \item[i)] The Zak transform $Z_\sigma$ satisfies the quasi-periodicity $Z_\act[T_{\act}(\gamma)\psi] = \lr(\gamma) Z_\act[\psi]$ and defines an isometry from $L^2(\meas,\mu)$ to $L^2(\Sigma,\mu;L^2(\vn(\Gamma)))$.
\item[ii)] The representation $T_\sigma$ is dual integrable with bracket
\begin{equation}\label{eq:ZakBracket}
[\varphi, \psi] = \int_C Z_\act[\varphi](x) Z_\act[\psi](x)^* d\mu(x).
\end{equation}
\end{description}
\end{TheoB}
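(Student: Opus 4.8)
The plan is to prove part (i) first and then deduce part (ii) from it. Before starting I would record two facts used repeatedly. The first is the standard identification of $L^2(\vn(\Gamma))$ with $\ell_2(\Gamma)$ through the Fourier coefficient map $F \mapsto (\wh F(\gamma))_{\gamma}$: it is isometric, $\|F\|_2^2 = \sum_\gamma |\wh F(\gamma)|^2$, the family $\{\lr(\gamma)\}_\gamma$ is an orthonormal basis of $L^2(\vn(\Gamma))$, and left multiplication by $\lr(\gamma)$ merely shifts Fourier coefficients. The second is that $T_\act$ is a genuine unitary representation; this rests on the cocycle identity $J_\act(\gamma\eta,x) = J_\act(\gamma,\act_\eta x)\,J_\act(\eta,x)$ — which comes from $\mu(\act_{\gamma\eta}(E)) = \mu(\act_\gamma(\act_\eta(E)))$ and two applications of the change-of-variables formula $\int_E J_\act(\eta,x)\,d\mu(x) = \mu(\act_\eta(E))$ — together with $\act_{\gamma^{-1}}\act_{\eta^{-1}} = \act_{(\eta\gamma)^{-1}}$, unitarity itself being the $\eta=\gamma^{-1}$ instance of the change-of-variables formula.

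For part (i), note that the Fourier coefficient of $Z_\act[\psi](x)$ at $\gamma$ is, up to the inversion $\gamma\mapsto\gamma^{-1}$, the scalar $T_\act(\gamma)\psi(x) = J_\act(\gamma^{-1},x)^{1/2}\psi(\act_{\gamma^{-1}}x)$, so by the first fact above $\big\|Z_\act[\psi](x)\big\|_{L^2(\vn(\Gamma))}^2 = \sum_{\gamma\in\Gamma} J_\act(\gamma^{-1},x)\,\big|\psi(\act_{\gamma^{-1}}x)\big|^2$. I would then integrate over $C$, using Tonelli (all terms are nonnegative) and the change-of-variables formula to turn the $\gamma$-th summand into $\int_{\act_{\gamma^{-1}}(C)}|\psi(y)|^2\,d\mu(y)$, and finally invoke the tiling property — $\{\act_\gamma(C)\}_\gamma$ disjointly covers $\meas$ up to null sets — to collapse the sum to $\|\psi\|_{\Hil}^2$. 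This proves the isometry; a posteriori it guarantees that $Z_\act[\psi](x)$ is a well-defined vector of $L^2(\vn(\Gamma))$ for $\mu$-a.e.\ $x$, while strong measurability of $x\mapsto Z_\act[\psi](x)$ follows since it is the $\mu$-a.e.\ limit in $L^2(\vn(\Gamma))$ of its strongly measurable finite partial sums. By polarization the isometry upgrades to $\langle Z_\act[\varphi],Z_\act[\psi]\rangle = \langle\varphi,\psi\rangle_{\Hil}$. The quasi-periodicity relation $Z_\act[T_\act(\gamma)\psi](x) = \lr(\gamma)Z_\act[\psi](x)$ is then a direct reindexing: write $T_\act(\gamma'\gamma) = T_\act(\gamma')T_\act(\gamma)$, substitute $\eta=\gamma'\gamma$, and use $\lr(\eta\gamma^{-1})^* = \lr(\gamma)\lr(\eta)^*$.

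For part (ii), I would derive everything from part (i). First, the noncommutative Hölder inequality $\|FG^*\|_1 \le \|F\|_2\|G^*\|_2 = \|F\|_2\|G\|_2$ puts $Z_\act[\varphi](x)Z_\act[\psi](x)^*$ in $L^1(\vn(\Gamma))$ for $\mu$-a.e.\ $x$, and Cauchy--Schwarz together with part (i) gives
$$
\int_C \big\|Z_\act[\varphi](x)Z_\act[\psi](x)^*\big\|_1 \, d\mu(x) \le \Big(\int_C \|Z_\act[\varphi](x)\|_2^2\,d\mu(x)\Big)^{1/2}\Big(\int_C \|Z_\act[\psi](x)\|_2^2\,d\mu(x)\Big)^{1/2} = \|\varphi\|_{\Hil}\|\psi\|_{\Hil} ,
$$
so the Bochner integral $[\varphi,\psi] = \int_C Z_\act[\varphi](x)Z_\act[\psi](x)^*\,d\mu(x)$ is a well-defined element of $L^1(\vn(\Gamma))$. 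To verify \eqref{eq:bracketproperty}, I would apply the isometry of part (i) with $\psi$ replaced by $T_\act(\gamma)\psi$ and then quasi-periodicity, obtaining
$$
\big\langle\varphi,T_\act(\gamma)\psi\big\rangle_{\Hil} = \big\langle Z_\act[\varphi], Z_\act[T_\act(\gamma)\psi]\big\rangle = \int_C \big\langle Z_\act[\varphi](x),\,\lr(\gamma)Z_\act[\psi](x)\big\rangle_{L^2(\vn(\Gamma))} \, d\mu(x) ;
$$
rewriting the fibrewise inner product as $\tau\big(Z_\act[\varphi](x)Z_\act[\psi](x)^*\lr(\gamma)^*\big)$ via cyclicity of $\tau$ and then pulling the bounded functional $F\mapsto\tau(F\lr(\gamma)^*)$ out of the Bochner integral yields $\tau\big([\varphi,\psi]\lr(\gamma)^*\big)$, which is exactly \eqref{eq:bracketproperty}.

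The computations themselves are short; the points that demand care are the measure-theoretic ones — well-definedness and strong measurability of the operator-valued field $x\mapsto Z_\act[\psi](x)$, the interchange of the sum over $\Gamma$ with the integral over $C$ (legitimate by nonnegativity and Tonelli), and the Bochner integrability of $x\mapsto Z_\act[\varphi](x)Z_\act[\psi](x)^*$ into $L^1(\vn(\Gamma))$. I expect the main (though mild) obstacle to be assembling these housekeeping facts cleanly rather than any single hard estimate: the engine of the proof, the isometry, is just one change of variables, applied once the tiling set is brought in at the right moment.
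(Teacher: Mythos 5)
Your proposal is correct and follows essentially the same route as the paper: the same Plancherel identification of $L^2(\vn(\Gamma))$ with $\ell_2(\Gamma)$, the same change of variables via the Jacobian combined with the tiling property, the same use of quasi-periodicity and traciality, and the same H\"older/Cauchy--Schwarz estimate for membership of the bracket in $L^1(\vn(\Gamma))$. The only (cosmetic) difference is the order of operations — you prove the diagonal isometry first and recover the bracket identity by polarization and quasi-periodicity, whereas the paper carries out the full off-diagonal trace computation $\tau\big(\lr(\gamma)^*\int_C Z_\act[\psi_1]Z_\act[\psi_2]^*\big) = \langle\psi_1,T_\act(\gamma)\psi_2\rangle$ in one pass and reads off the isometry as the special case $\gamma=\id$, $\psi_1=\psi_2$.
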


As an application, we will see how Theorem B yields bracket maps in a wide range of scenarios. Other brackets can be constructed ad hoc. Here is a brief list of examples -- by no means exhaustive -- that come to mind. Some of them will be explained in further detail in the body of the paper:

\begin{itemize}
\item {\bf LCA groups.} If $\Gamma$ is abelian, its group von Neumann algebra coincides with the algebra of essentially bounded functions on its dual group. Taking this into account, we shall show how can we recover all the results and examples from \cite{HSWW10} as a particular case of our approach.
 
\item {\bf Shift invariance in $\HD$.} The left regular representation $\Gamma \to U(\HD)$ is always dual integrable. This will be proved in Section \ref{sec:bracket}. In particular, we may characterize which $\psi  \in \HD$ yield ON basis, Riesz basis and frames by translations in the associated invariant subspace of $\HD$.
 
\item {\bf Shift invariance in $L^2(\G)$.} It is even more interesting to replace $\HD$ by a larger Hilbert space in the previous example. Let $\G$ be a locally compact group admitting $\Gamma$ as a discrete subgroup and consider the unitary representation given by left translations of $\Gamma$. We shall show this is again dual integrable for all pairs $(\G,\Gamma)$. Take for instance $\G$ to be the Heisenberg group $\mathbb{H}^n = (\R^n \times \R^n \times \R, \cdot)$ and $\Gamma$ the discrete subgroup with entries in $\Z^n \times \Z^n \times \frac{1}{2} \Z$. Theorem A characterizes those orbits in $L^2(\G)$ which yield ON systems, Riesz systems and frames.

\item {\bf Semidirect products.} We shall also study the case of semidirect products of locally compact groups $S := A \ract \G$, which includes the crystallographic groups, considering unitary representations of a subgroup of $S$ in $L^2(A)$ related to the left regular representation of $A$ and the representation $T_\act$ of $\G$ induced by the action $\act$, both representations in $L^2(A).$ The dual integrability of these representations -- we will consider the quasi-regular and affine representations -- follows from Theorem B. These representations include the ones used in the theory of composite wavelets \cite{WeissWilson01, LaugersenWeaverWeissWilson02, LabateWeiss10}.

\item \textbf{Other examples.} We may consider many other actions of discrete groups in Hilbert spaces. For instance, any map $\Lambda: \{1,2, \ldots, n\} \to U(\mathcal{H})$ yields a unitary action of the free group $\mathbb{F}_n$ with $n$ generators in $\mathcal{H}$. It would also be interesting to consider infinite Coxeter groups trying to exploit their symmetry properties. Another direction to follow is to search for optimal frame constants in the case of finite groups, like the permutation group. A priori, these questions may be faced by showing dual integrability, finding the associated bracket and studying its properties.   
\end{itemize}

\vskip-3pt

The organization of this paper is as follows. In Section \ref{sec:duality} we will review some background on noncommutative integration and group von Neumann algebras that will be used along the paper. Although standard in noncommutative harmonic analysis, we feel this might be useful for the potential audience of this paper. In Section \ref{sec:bracket} we study the main properties of the bracket map and prove  Theorem A. The proof of Theorem B is given in Section \ref{sec:NCZaktransform} together with several equivalent characterizations of the dual integrability -- one of them establishes that $T$ is dual integrable if and only if it is square integrable (but not necessarily irreducible) -- and our examples are explained in Section \ref{Sec:Examples}.

\vskip5pt

\noindent \textbf{Acknowledgement.} D. Barbieri was supported by a Marie Curie Intra European Fellowship (Prop. N. 626055) within the 7th European Community Framework Programme. E. Hern\'andez and J. Parcet were supported by Grant MTM2010-16518 (Ministerio de Econom\'ia y Competitividad, Spain) and J. Parcet was also supported by ERC Starting Grant 256997-CZOSQP and by ICMAT Severo Ochoa Grant SEV-2011-0087 (Spain).

\section{Noncommutative Fourier series}\label{sec:duality}

The problem of Fourier duality for discrete (and more general locally compact) groups has been addressed in terms of their von Neumann algebra in several contexts \cite{Kunze58, Eymard64, EnockSchwartz92, Connes94}.
This approach directly extends the ordinary Pontryagin duality for LCA groups: if $\G$ is a LCA group, the Fourier transform of an integrable function $f$ on $\G$ is defined on the character group $\wh{\G}$ as
$$
\F f (\alpha) := \int_\G f(g) \ol{\alpha(g)} dg\, ,
$$
where $dg$ stands for the Haar measure of $\G$. The map $\F$ extends to an isometry of $L^2(\G)$ onto $L^2(\wh\G)$ which makes the operator $L_f$ of convolution by $f$ in $L^2(\G)$ correspond to the operator $M_{\F f}$ of multiplication by $\F f$ in $L^2(\wh\G)$ as
$$
M_{\F f} := \F L_f \F^{-1} .
$$
Since for any measurable $f$ such that $L_f$ is a closed densely defined operator on $L^2(\G)$ one can define the Fourier transform of $f$ as such multiplier, for general locally compact groups a natural choice is then to consider directly the operator $L_f$ as the Fourier transform of $f$. A characterization of the von Neumann algebra $\vn(\Gamma)$ of a discrete group $\Gamma$ is indeed that of being isomorphic to the left convolution algebra of $\HD$.% (see e.g. \cite{Xu}).

\subsection{Noncommutative integration}

In this section we review some basic notions for operators on a Hilbert space, which can be found e.g. in \cite[Chapt. 3]{Conway00} and \cite[Vol. 2, Chapt. 6]{KadisonRingrose83}, and for von Neumann algebras and noncommutative $L^p$ spaces, see e.g. \cite{KadisonRingrose83}, \cite{Nelson74} and \cite{PisierXu03}.

Let $F$ be a densely defined closed operator on a Hilbert space $\Hil$, and denote with $\mathcal{D}(F)$ the domain of $F$. $F$ is said to be \emph{positive} if
$$
\langle u, F u\rangle_{\Hil} \geq 0 \quad \forall \ u \in \mathcal{D}(F) .
$$
This is equivalent to say that for any positive operator $F$ there exists a densely defined operator $x$ on $\Hil$ such that $F = x^*x$. Any time we write an operator inequality such as $F_1 \geq F_2$ we mean that $F_1 - F_2 \geq 0$.

The right and left \emph{support} of $F$, indicated respectively with $r_F$ and $l_F$, are defined as the minimal orthogonal projections such that
$$
F r_F = F \ \ \ \textrm{and} \ \ \ l_F F = F .
$$
Explicitly, $r_F = \Proj_{(\Ker(F))^\bot}$ and $l_F = \Proj_{\ol{\Ran(F)}}$ are the orthogonal projections onto the orthogonal complement of the kernel and on the closure of the range. If $F = F^*$, then $r_F = l_F$ and in such a case we indicate the support with $s_F$. Moreover, since $\Ker(F^*F) = \Ker(F)$, we have that $r_F = s_{|F|}$ where $|F| = (F^*F)^\frac12$.

A \emph{partial isometry} $u$ on a Hilbert space is an operator whose restriction to $(\Ker(u))^\bot$ is an isometry with the closure of the range. It is characterized by the properties $u = u u^* u$ or $u^* u = \Proj_{(\Ker(u))^\bot}$ .

For any closed and densely defined $F$ there exists a unique partial isometry $u_F$ satisfying $\Ker(u_F) = \Ker(F)$ and such that
$$
F = u_F |F|
$$
which is called the \emph{polar decomposition} of $F$. In particular $u_F^* u_F^{\phantom{*}} = r_F$.
\newpage
Part of von Neumann algebra theory has evolved as the noncommutative form of measure theory and integration. 
A \emph{von Neumann algebra} $\M$ (see e.g. \cite[Vol.1, Chapt. 5]{KadisonRingrose83}) is a
% unital weak-operator closed $\mathrm{C}^*$-algebra
weak-operator closed complex unital algebra of bounded linear operators on a complex Hilbert space that is closed under taking the adjoint.  The positive cone $\M_+$ is the set of positive operators in $\M$ and a trace $\tau: \M_+ \to [0,\infty]$ is a linear map with the \emph{tracial property}
$$
\tau(F^*F) = \tau(FF^*).
$$
The trace $\tau$ plays the r\^ole of the integral in the classical case. It is normal if $\sup_\alpha \tau(F_\alpha) = \tau(\sup_\alpha F_\alpha)$ for bounded increasing nets $\{F_\alpha\}$ in $\M_+$; it is semifinite if for any non-zero $F \in \M_+$ there exists $0 < F' \le F$ such that $\tau(F') < \infty$; and it is faithful if $\tau(F) = 0$ implies that $F = 0$. A von Neumann algebra is semifinite when it admits a normal semifinite faithful (n.s.f. in short) trace $\tau$. Any operator $F$ is a linear combination $F_1 - F_2 + iF_3 - iF_4$ of four positive operators. Thus, we can extend $\tau$ to the whole algebra $\M$ and the tracial property can be restated in the familiar form $\tau(F_1F_2) = \tau(F_2F_1)$.
We will always consider semifinite von Neumann algebras equipped with a n.s.f. trace, and refer to the pair $(\M,\tau)$ as a \emph{noncommutative measure space}. Note that commutative von Neumann algebras correspond to classical $L^\infty$ spaces \cite{PisierXu03}.

Let $\mathcal{S}_\M^+$ be the set of all $F \in \M_+$ such that $\tau(s_F) < \infty$ and set $\mathcal{S}_\M$ to be the linear span of $\mathcal{S}_\M^+$. If we write $|F|=\sqrt{F^*F}$, we can use the spectral measure $d\gamma: \R_+ \to \B(\Hil)$ of $|F|$ to define
$$
|F|^p = \int_{\R_+} s^p \, d \gamma(s) \quad \mbox{for} \quad 0 < p < \infty.
$$
We have $F \in \mathcal{S}_\M \Rightarrow |F|^p \in \mathcal{S}_\M^+ \Rightarrow \tau(|F|^p) < \infty$. If we set $\|F\|_p = \tau( |F|^p )^{\frac1p}$, we obtain a norm in $\mathcal{S}_\M$ for $1 \le p < \infty$ and a $p$-norm for $0 < p < 1$. Using that $\mathcal{S}_\M$ is an involutive strongly dense ideal of $\M$, we define the \emph{noncommutative $L^p$ space} $L^p(\M)$ associated to the pair $(\M, \tau)$ as the completion of $(\mathcal{S}_\M, \| \ \|_p)$. On the other hand, we set $L^\infty(\M) = \M$ equipped with the operator norm. Elements of $L^p(\M)$ can also be described as measurable operators $F$ affiliated to $(\M,\tau)$ for which $\tau(|F|^p)$ is finite (see e.g. \cite{PisierXu03} for the definitions). The space $L^2(\M)$ is a Hilbert space whose scalar product can be obtained by polarization of the corresponding norm: for $F_1, F_2 \in L^2(\M)$
$$
\langle F_1, F_2\rangle_2 = \tau(F_2^* F_1).
$$
Many fundamental properties of classical $L^p$ spaces such as duality, real and complex interpolation, H\"older inequalities hold in this setting, and we refer to \cite{Nelson74, PisierXu03} for more information and historical references.

Note that classical $L^p(\Omega,\mu)$ spaces are denoted in the noncommutative terminology as $L^p(\M)$ where $\M$ is the commutative von Neumann algebra $L^\infty(\Omega,\mu)$.

\subsection{Group von Neumann algebras}

Let $\Gamma$ be a discrete group, and let $\B(\HD) \supset U(\HD)$ denote respectively the bounded and unitary operators on $\HD$. Let us denote with $\id$ the identity of $\Gamma$, and with $\{\delta_\gamma\}_{\gamma \in \Gamma}$ the canonical basis on $\HD$. We will also denote with $\lr$ the left regular representation of $\Gamma$, given by $\lr(\gamma) \delta_{\gamma'} = \delta_{\gamma\gamma'}$.

In analogy with classical Fourier analysis on $\T$, we will call \emph{trigonometric polynomials} the operators obtained by finite linear combinations of the left regular representation. The \emph{von Neumann algebra associated to $\Gamma$} is then defined as the closure of trigonometric polynomials in the weak operator topology
$$
\vn(\Gamma) := \ol{\vsp\{\lr(\gamma)\}_{\gamma \in \Gamma}}^{\mathrm{WOT}} .
$$
This amounts to say that $F \in \vn(\Gamma)$ if and only if there exists a sequence $\{F_n\}$ of trigonometric polynomials that converges to $F$ in the weak operator topology of $\HD$.
Given $F \in \mathrm{vNa}(\Gamma)$, we consider the standard normalized trace
$$
\tau(F) = \langle F \delta_e, \delta_e \rangle
$$
where $e$ denotes the identity element of $\Gamma$. It is indeed tracial and it is easy to see that it is normal, faithful and finite, i.e. $\tau(F^*F) < \infty$ for all $F \in \vn(\Gamma)$.

Since $(\vn(\Gamma),\tau)$ is a noncommutative measure space, the associated noncommutative $L^p$ spaces over $\vn(\Gamma)$ are defined as in the previous section. In particular, we note that $L^p(\vn(\Gamma)) \equiv \ol{\vn(\Gamma)}^{\|\cdot\|_p}$ when $1 \leq p < \infty$. Moreover, since $\tau$ is finite, H\"older inequality implies $L^q(\vn(\Gamma)) \subset L^p(\vn(\Gamma))$ for all $1 \leq p \leq q \leq \infty$.
Note that, while $L^\infty(\vn(\Gamma)) \equiv \vn(\Gamma) \subset \B(\HD)$, operators in $L^p(\vn(\Gamma))$ for $p < \infty$ in general need not be bounded.

Moreover we observe that $\vn(\Gamma)$ is the Banach dual of $L^1(\vn(\Gamma))$, so the convergence in the weak operator topology is equivalent to the weak-$*$ convergence, that is $\tau((F - F_n)G) \to 0$ as $n \to \infty$ for all $G \in L^1(\vn(\Gamma))$. Note also that since trigonometric polynomials define a unital $C^*$-subalgebra of $\B(\HD)$, by the Double Commutant Theorem this closure coincides with the closure in the strong operator topology (see e.g. \cite{Conway00}).

For $F \in \vn(\Gamma)$, its \emph{Fourier coefficients} are
$$
\wh{F}(\gamma) := \tau(\lr(\gamma)^*F)
$$
and one can write
$$
F = \sum_{\gamma \in \Gamma} \wh{F}(\gamma) \lr(\gamma) \, , 
$$
where convergence is understood in the weak operator topology.
The operator $F$ is then a bounded left convolution operator on $\HD$ with convolution kernel given by $\wh{F} = F\delta_\id$:
$$
F \psi (\gamma') = \sum_{\gamma \in \Gamma} \wh{F}(\gamma) \psi(\gamma^{-1}\gamma') = (\wh{F} \ast \psi)(\gamma') \quad \forall \ \psi \in \HD \, , \ \forall \ \gamma' \in \Gamma.
$$
Note also that, since $\tau$ is finite, H\"older inequality implies that the Fourier coefficients are well-defined for all $F \in L^p(\vn(\Gamma))$ for $1 \leq p \leq \infty$ and in particular if $F \in L^1(\vn(\Gamma))$ then $\wh{F} \in \ell_\infty(\Gamma)$. Moreover, as $\tau(\lambda(\gamma'\gamma^{-1})^*F) = \langle F\delta_\gamma, \delta_{\gamma'}\rangle_{\HD}$ is well defined for all $\gamma, \gamma' \in \Gamma$ when $F \in L^p(\vn(\Gamma))$, then $F$ is defined on the orthogonal basis $\{\delta_\gamma\}_{\gamma \in \Gamma}$ and hence on the dense domain of finite sequences.

\begin{lem}[Uniqueness]\label{lem:uniqueness}
Let $F \in L^1(\vn(\Gamma))$. If $\wh{F}(\gamma) = 0$ for all $\gamma \in \Gamma$, then $F = 0$.
\end{lem}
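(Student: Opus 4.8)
The plan is to deduce the statement from the duality $\vn(\Gamma) = (L^1(\vn(\Gamma)))^*$ and the density of trigonometric polynomials. First I would rephrase the hypothesis: since $\wh{F}(\gamma) = \tau(\lr(\gamma)^* F)$ and the trace is linear, the assumption $\wh{F}(\gamma) = 0$ for all $\gamma \in \Gamma$ is equivalent to $\tau(PF) = 0$ for every trigonometric polynomial $P$, because $\{\lr(\gamma)^* : \gamma \in \Gamma\} = \{\lr(\gamma) : \gamma \in \Gamma\}$ spans the same subspace of $\vn(\Gamma)$.

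The core of the argument is to upgrade this to $\tau(GF) = 0$ for all $G \in \vn(\Gamma)$. Since $F \in L^1(\vn(\Gamma))$, the functional $G \mapsto \tau(GF)$ on $\vn(\Gamma)$ is bounded by H\"older's inequality ($|\tau(GF)| \le \|G\|_\infty \|F\|_1$) and, being represented by the element $F$ of the predual $L^1(\vn(\Gamma))$, it is continuous for the weak-$*$ topology of $\vn(\Gamma)$. Moreover the trigonometric polynomials form a unital $*$-subalgebra of $\vn(\Gamma)$ whose weak-operator closure is $\vn(\Gamma)$, so by Kaplansky's density theorem their unit ball is strongly dense in the unit ball of $\vn(\Gamma)$; on bounded sets the strong (hence weak) operator topology agrees with the weak-$*$ topology, so trigonometric polynomials are weak-$*$ dense in $\vn(\Gamma)$. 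A weak-$*$ continuous functional vanishing on a weak-$*$ dense set vanishes identically, whence $\tau(GF) = 0$ for every $G \in \vn(\Gamma)$. To finish, I would use the polar decomposition $F = u_F |F|$: as $F$ is a measurable operator affiliated to $\vn(\Gamma)$ the partial isometry $u_F$ lies in $\vn(\Gamma)$, so taking $G = u_F^*$ and using the tracial property gives $0 = \tau(u_F^* F) = \tau(|F|\, u_F^* u_F) = \tau(|F|\, r_F) = \tau(|F|) = \|F\|_1$, hence $F = 0$. (Equivalently, the canonical embedding $L^1(\vn(\Gamma)) \hookrightarrow \vn(\Gamma)^*$ is isometric, so vanishing of the pairing against all of $\vn(\Gamma)$ already forces $F = 0$.)

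The only delicate point is the density step, and more precisely the need to be careful that "weak operator closure" and "weak-$*$ closure" are used correctly for the span of the $\lr(\gamma)$, which need not be bounded; this is why I would route the argument through Kaplansky's theorem applied to balls, where the two topologies genuinely coincide, rather than comparing closures of the whole span directly. One can also avoid the weak-$*$ language altogether by first checking that trigonometric polynomials are $\|\cdot\|_1$-dense in $L^1(\vn(\Gamma))$ (using $\|H\|_2^2 = \|H\delta_\id\|_{\HD}^2$, Kaplansky's theorem again, and $\|\cdot\|_1 \le \|\cdot\|_2$ for the finite trace) and then pairing $F$ against $\|\cdot\|_1$-approximants of $u_F^* \in \vn(\Gamma)$; but this still rests on Kaplansky, so the topological bookkeeping around approximating a general element of $\vn(\Gamma)$ by trigonometric polynomials is unavoidable and is the real content of the proof.
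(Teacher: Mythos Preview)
Your argument is correct and follows essentially the same route as the paper: identify $F$ with the weak-$*$ continuous functional $G \mapsto \tau(GF)$ on $\vn(\Gamma)$, use weak-$*$ density of trigonometric polynomials to conclude the functional vanishes, and invoke injectivity of $L^1(\vn(\Gamma)) \hookrightarrow \vn(\Gamma)^*$. The paper's version is terser---it simply states that every element of $\vn(\Gamma)$ is a weak-$*$ limit of trigonometric polynomials and appeals directly to injectivity---whereas you spell out the density step via Kaplansky on balls and give the explicit polar-decomposition computation for injectivity; these are genuine refinements but not a different strategy.
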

\begin{proof}
Let $\mathbf{j} : L^1(\vn(\Gamma)) \to \vn(\Gamma)^*$ be the natural injection of $L^1(\vn(\Gamma))$ in its double dual, that maps $F \in L^1(\vn(\Gamma))$ to the continuous functional
$$
\mathbf{j}(F)\Phi := \tau(F\Phi) \quad \forall \ \Phi \in \vn(\Gamma).
$$
% By definition, $\forall \, F \in L^1(\vn(\Gamma))$ we have that $T(F)$ is a $W^*$-continuous linear functional.
% Let us also prove that $T$ is injective. By H\"older inequality we have
% $$
% \|T(F)\| = \sup_{\|\Phi\| = 1} |T(F)\Phi| = \sup_{\|\Phi\| = 1} |\tau(F\Phi)| \leq \|F\|_1 .
% $$
% Moreover, let $F = u_F |F|$ be the polar decomposition of $F$. Since $u_F \in \vn(\Gamma)$, we can choose $\Phi = u_F^*$, so that by the circularity of $\tau$
% $$
% T(F)u_F^* = \tau(u_F |F| u_F^*) = \tau(|F|r_F)
% $$
% where, by Lemma \ref{lem:rFs|F|}, $|F|r_F = |F|$. We have then obtained that $\|T(F)\| = \|F\|_1$, which implies injectivity\footnote{Injectivity could be actually deduced as a general consequence of the fact that $\vn(\Gamma)$ is the dual of $L^1(\vn(\Gamma))$, so that $T$ is the immersion of a Banach space in its bidual, which is always injective.}.

Let us now suppose that $\tau(F\lr(\gamma)^*) = \wh{F}(\gamma) = 0$ for all $\gamma \in \Gamma$, and let $\Phi \in \vn(\Gamma)$ be a trigonometric polynomial, i.e. for some finite $\Omega \subset \Gamma$
$$
\Phi = \sum_{\gamma \in \Omega} \phi(\gamma) \lr(\gamma) .
$$
Then
$$
\mathbf{j}(F)\Phi = \tau(F\Phi) = \sum_{\gamma \in \Omega} \phi(\gamma) \tau(F\lr(\gamma)) = 0 .
$$
Since any element of $\vn(\Gamma)$ is the weak-$*$ limit of such polynomials, by weak-$*$ continuity $\mathbf{j}(F) = 0$ and by injectivity $F = 0$.
\end{proof}

% \begin{rem}
% $L^1(\vn(\Gamma))$ is traditionally identified with the so-called \emph{Fourier algebra} of $\Gamma$, that is defined as (see e.g. \cite{Eymard64})
% $$
% A(\Gamma) = \{f: \Gamma \to \C : \, \exists \, \psi_1,\psi_2 \in \HD \, \textnormal{such that} \ f(\gamma) = \langle \psi_1, \lr(\gamma)\psi_2\rangle_{\HD}\} .
% $$
% % \cite[Vol. 2, Chapt. VI, \S3]{Takesaki03}
% For any locally compact group, so in particular for $\Gamma$ discrete, the Banach dual of the Fourier algebra of $\Gamma$ is indeed isomorphic to the von Neumann algebra $\vn(\Gamma)$, that is also isomorphic to the Banach dual of its noncommutative $L^1$:
% $$
% A(\Gamma)^* \approx \vn(\Gamma) \approx (L^1(\vn(\Gamma)))^*
% $$
% so since a von Neumann algebra has a unique predual, one has $A(\Gamma)\approx L^1(\vn(\Gamma))$ as Banach spaces.
% If $\Gamma$ is abelian, by ordinary Plancherel Theorem one can also easily see that $A(\Gamma)$ is isomorphic to the Fourier transform of $L^1(\wh\Gamma)$, where $\wh\Gamma$ is the Pontryagin dual of $\Gamma$. This same result in the present setting can be obtained as a corollary of Lemma \ref{lem:dualint}.
% \end{rem}

Observe that, since
$$
\langle \lr(\gamma), \lr(\gamma')\rangle_2 = \langle \delta_{\gamma'} , \delta_{\gamma} \rangle_{\HD} = \delta_{\gamma,\gamma'} \ ,
$$
then $\{\lambda(\gamma)\}_{\gamma \in \Gamma}$ is an orthonormal system. Since $L^2(\vn(\Gamma)) \subset L^1(\vn(\Gamma))$, by the uniqueness Lemma \ref{lem:uniqueness} one has that $\langle F, \lambda(\gamma)\rangle_2 = 0$ for all $\gamma \in \Gamma$ implies $F = 0$, which means that $(\vsp\{\lambda(\gamma)\}_{\gamma \in \Gamma})^\bot = \{0\}$ in $L^2(\vn(\Gamma))$. Hence trigonometric polynomials are dense in $L^2(\vn(\Gamma))$, and $\{\lambda(\gamma)\}_{\gamma \in \Gamma}$ is an orthonormal basis. This allows to prove the following Plancherel-type result.
\begin{lem}[Plancherel]\label{lem:Plancherel}\ \\
\vspace{-12pt}
\begin{itemize}
\item[i.] Let $F$ be in $L^2(\vn(\Gamma))$. Then $\{\wh{F}(\gamma)\} \in \HD$ and $\|F\|_2 = \|\wh{F}\|_{\HD}$.
\item[ii.] Let $\{\phi(\gamma)\} \in \HD$. Then ${\sum_{\gamma \in \Gamma}} \phi(\gamma) \lr(\gamma)$ converges in the $\|\cdot\|_2$ norm to an operator $\Phi \in L^2(\vn(\Gamma))$ such that $\wh{\Phi}(\gamma) = \phi(\gamma)$ for all $\gamma \in \Gamma$.
\end{itemize}
\end{lem}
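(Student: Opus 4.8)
The plan is to transport the abstract Hilbert-space Parseval theorem through the identification established in the paragraph just before the lemma, namely that $\{\lr(\gamma)\}_{\gamma \in \Gamma}$ is an orthonormal basis of the Hilbert space $L^2(\vn(\Gamma))$ equipped with the scalar product $\langle F_1, F_2 \rangle_2 = \tau(F_2^* F_1)$. The one bookkeeping point to record first is that the Fourier coefficient of an $L^2$ element is literally its coordinate in this basis: since $\tau$ is finite, H\"older's inequality gives $L^2(\vn(\Gamma)) \subset L^1(\vn(\Gamma))$, so $\wh{F}(\gamma) = \tau(\lr(\gamma)^* F)$ is defined for every $F \in L^2(\vn(\Gamma))$, and comparing with the formula for the scalar product one has $\wh{F}(\gamma) = \tau(\lr(\gamma)^* F) = \langle F, \lr(\gamma)\rangle_2$.

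For part i, I would invoke Parseval's identity for the orthonormal basis $\{\lr(\gamma)\}$: given $F \in L^2(\vn(\Gamma))$,
$$
\|F\|_2^2 = \sum_{\gamma \in \Gamma} \big| \langle F, \lr(\gamma)\rangle_2 \big|^2 = \sum_{\gamma \in \Gamma} \big| \wh{F}(\gamma) \big|^2,
$$
which shows at once that $\{\wh{F}(\gamma)\}_{\gamma \in \Gamma} \in \HD$ and that $\|F\|_2 = \|\wh{F}\|_{\HD}$.

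For part ii, given $\{\phi(\gamma)\}_{\gamma \in \Gamma} \in \HD$, the elementary theory of orthonormal bases guarantees that the net of finite partial sums of $\sum_{\gamma \in \Gamma} \phi(\gamma) \lr(\gamma)$ is Cauchy in $L^2(\vn(\Gamma))$ and hence converges, in the $\|\cdot\|_2$ norm, to some $\Phi \in L^2(\vn(\Gamma))$ with $\|\Phi\|_2 = \|\phi\|_{\HD}$. To identify its Fourier coefficients I would use continuity of the scalar product together with orthonormality of $\{\lr(\gamma)\}$: for fixed $\gamma' \in \Gamma$,
$$
\wh{\Phi}(\gamma') = \langle \Phi, \lr(\gamma')\rangle_2 = \lim_{\Omega} \Big\langle \sum_{\gamma \in \Omega} \phi(\gamma)\lr(\gamma), \lr(\gamma')\Big\rangle_2 = \sum_{\gamma \in \Gamma} \phi(\gamma) \langle \lr(\gamma), \lr(\gamma')\rangle_2 = \phi(\gamma'),
$$
the limit being taken over finite subsets $\Omega \subset \Gamma$. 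This completes the argument.

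There is no genuine obstacle here: the substance of the statement has already been absorbed into the preceding assertion that $\{\lr(\gamma)\}$ is an orthonormal basis of $L^2(\vn(\Gamma))$, which itself rested on the uniqueness Lemma~\ref{lem:uniqueness}. The only point requiring a little care is to keep the two notions of convergence apart --- weak-operator convergence of the Fourier series inside $\vn(\Gamma)$ versus $\|\cdot\|_2$-convergence inside $L^2(\vn(\Gamma))$ --- and to observe that they are compatible, since $\|\cdot\|_2$-convergence $F_n \to F$ forces $\langle F_n \delta_\gamma, \delta_{\gamma'}\rangle = \langle F_n, \lr(\gamma'\gamma^{-1})\rangle_2 \to \langle F, \lr(\gamma'\gamma^{-1})\rangle_2$, so that the $L^2$-convergent Fourier series of $F$ also converges to $F$ in the weak operator topology.
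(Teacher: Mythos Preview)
Your argument is correct and matches the paper's approach exactly: the paper does not write out a proof but simply notes that the orthonormal-basis property of $\{\lr(\gamma)\}_{\gamma\in\Gamma}$ ``allows to prove'' the Plancherel lemma, and what you have supplied is precisely that deduction via Parseval's identity and the standard expansion theorem for orthonormal bases.
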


Note that this also implies that trigonometric polynomials are dense in $L^p(\vn(\Gamma))$ for all $1 \leq p < \infty$. Indeed, for $p < 2$ by H\"older inequality and the finiteness of $\tau$ one has $\|\Phi\|_p \leq \|\Phi\|_2$ while for $p > 2$ one has
$$
\|\Phi\|_p^p = \tau(|\Phi|^{p-2}|\Phi|^2) \leq \|\Phi\|_\infty^{p - 2} \|\Phi\|_2^2
$$
for all $\Phi \in \vn(\Gamma)$. Let $\{F_n\}$ be a sequence of trigonometric polynomials converging weak-$*$ (and hence in the $\|\cdot\|_2$ norm) to $F \in \vn(\Gamma)$. Then by choosing $\Phi = F - F_n$ in the preceding inequalities one has that $F_n$ converges to $F$ in $L^p(\vn(\Gamma))$, so that $\ol{\vn(\Gamma)}^{\|\cdot\|_p} \equiv \ol{\vsp\{\lambda(\gamma)\}_{\gamma \in \Gamma}}^{\|\cdot\|_p}$, for all $1 \leq p < \infty$.

Moreover, since for $\{\wh{F}(\gamma)\} \in \ell_1(\Gamma)$
\begin{equation}\label{eq:Pontryagin}
\|F\|_\infty = \Big\|\sum_{\gamma \in \Gamma} \wh{F}(\gamma) \lr(\gamma)\Big\|_{\HD \to \HD} \leq \sum_{\gamma \in \Gamma} |\wh{F}(\gamma)| = \|\wh{F}\|_{\ell_1(\Gamma)} \ ,
\end{equation}
by interpolation with the case $p = 2$ one gets the Hausdorff-Young inequalities
$$
\|F\|_q \leq \|\wh{F}\|_{\ell_p(\Gamma)} \quad \textnormal{for} \quad \frac1p + \frac1q = 1 \, , \ 1 \leq p \leq 2 .
$$

We will also make use of the following lemma.
\begin{lem}\label{lem:Kaplansky}
Let $G$ be an $L^1(\vn(\Gamma))$ selfadjoint operator such that
$$
\tau(|F|^2G) \geq 0
$$
for all trigonometric polynomials $F$. Then $G \geq 0$.
\end{lem}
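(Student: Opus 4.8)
The plan is to reduce the claim to the positivity of the normal functional $\phi_G\colon\vn(\Gamma)\to\C$, $\phi_G(b):=\tau(bG)$, and then to deduce that positivity from a Kaplansky-density argument followed by a single spectral test. First I would check that $\phi_G$ is well defined and weak-$*$ continuous: since $G\in L^1(\vn(\Gamma))$ and $b\in\vn(\Gamma)=L^\infty(\vn(\Gamma))$, H\"older's inequality gives $|\tau(bG)|\le\|b\|_\infty\|G\|_1$, and $\phi_G$ is exactly the functional in the predual $L^1(\vn(\Gamma))$ represented by $G$. It is hermitian because $G=G^*$ forces $\overline{\phi_G(b)}=\tau((bG)^*)=\tau(Gb^*)=\tau(b^*G)=\phi_G(b^*)$, so it is real valued on selfadjoint operators. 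The hypothesis says precisely that $\phi_G(F^*F)=\tau(|F|^2G)\ge 0$ for every trigonometric polynomial $F$; hence it is enough to prove $\phi_G(b)\ge0$ for all $b\in\vn(\Gamma)_+$, and then to test this against the appropriate projection attached to $G$.

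The crux is that $G$ only belongs to $L^1(\vn(\Gamma))$, so it pairs with bounded operators but not with general elements of $L^1$; the bare $\|\cdot\|_2$-density of trigonometric polynomials is therefore useless here, and what is needed is a \emph{norm-bounded} approximation of an arbitrary positive element of $\vn(\Gamma)$ by operators of the form $|F|^2$ with $F$ a trigonometric polynomial. I would produce it in two stages. Let $C^*_r(\Gamma):=\ol{\vsp\{\lr(\gamma)\}_{\gamma\in\Gamma}}^{\,\|\cdot\|_\infty}$ be the reduced $C^*$-algebra; by the Double Commutant Theorem it is strongly dense in $\vn(\Gamma)$, so by the Kaplansky density theorem \cite{KadisonRingrose83} the positive part of its closed unit ball is strongly — hence weakly — dense in the positive part of the closed unit ball of $\vn(\Gamma)$. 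On the other hand, any $a\in C^*_r(\Gamma)$ with $0\le a$ and $\|a\|_\infty\le1$ factors as $a=(a^{1/2})^2$ with $a^{1/2}\in C^*_r(\Gamma)$ and $\|a^{1/2}\|_\infty\le1$; picking trigonometric polynomials $F_n\to a^{1/2}$ in operator norm and rescaling by $F_n\mapsto F_n/\max(1,\|F_n\|_\infty)$ gives $\|F_n\|_\infty\le1$ with $|F_n|^2\to a$ in operator norm. Putting the two stages together, every $b\in\vn(\Gamma)_+$ with $\|b\|_\infty\le1$ lies in the weak operator closure of $\mathcal S:=\{\,|F|^2: F\text{ a trigonometric polynomial with }\|F\|_\infty\le1\,\}$.

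To conclude, the set of selfadjoint $b$ with $\phi_G(b)\ge0$ is weak-$*$ closed, and since the weak-$*$ and weak operator topologies agree on the unit ball it is weak-operator closed there as well; it contains $\mathcal S$, hence it contains the whole positive part of the unit ball, and by homogeneity $\phi_G\ge0$ on $\vn(\Gamma)_+$. Finally, write the selfadjoint element $G\in L^1(\vn(\Gamma))$ as $G=G_+-G_-$ with $G_\pm\ge0$ and $G_+G_-=0$, and let $e\in\vn(\Gamma)$ be the support projection of $G_-$; this is a spectral projection of $G$, hence lies in $\vn(\Gamma)$ since $G$ is affiliated to it. Then $eG=-G_-$, so $0\le\phi_G(e)=\tau(eG)=-\tau(G_-)\le0$, which forces $\tau(G_-)=0$, and faithfulness of $\tau$ gives $G_-=0$, i.e. $G=G_+\ge0$. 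The single delicate point in this plan is the norm-bounded positive approximation of the second paragraph — this is precisely where the Kaplansky density theorem is indispensable, and where the hypothesis being phrased in terms of $|F|^2$ (rather than arbitrary positive trigonometric polynomials) costs the extra limiting step through $C^*_r(\Gamma)$.
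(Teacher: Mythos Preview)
Your proof is correct, and its overall architecture matches the paper's: reduce to showing $\tau(bG)\ge 0$ for positive $b\in\vn(\Gamma)$ via a Kaplansky-density argument, then test against the negative spectral projection of $G$ and invoke faithfulness of $\tau$.

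The difference lies in how the approximation is carried out. The paper only approximates \emph{projections}: for $q\in\vn(\Gamma)$ a projection, it takes trigonometric polynomials $F_n\to q$ strongly (with $\|F_n\|_\infty\le 1$ by Kaplansky) and uses the algebraic identity
\[
q-|F_n|^2 \;=\; q^*A_n + A_n^*q - A_n^*A_n,\qquad A_n:=q-F_n,
\]
to see directly that strong convergence $F_n\to q$ forces $|F_n|^2\to q$ in the weak-$*$ topology. Your route is a bit longer: you pass through the intermediate $C^*$-algebra $C^*_r(\Gamma)$, using the positive-part version of Kaplansky to approximate an arbitrary $b\in\vn(\Gamma)_+$ by positives in $C^*_r(\Gamma)$, then approximate the \emph{square root} of each such element by trigonometric polynomials in operator norm. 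The paper's identity is a slicker shortcut that avoids both the intermediate $C^*$-algebra and the square-root step; your version is more modular, relies only on standard $C^*$-approximation facts, and yields $\phi_G\ge 0$ on all of $\vn(\Gamma)_+$ rather than just on projections (which is more than is needed, since a single spectral projection suffices at the end).
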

\begin{proof}
In order to prove this lemma it suffices to prove that
\begin{equation}\label{eq:KaplanskyClaim}
\tau(qG) \geq 0
\end{equation}
for all orthogonal projections $q \in \vn(\Gamma)$. Indeed if that holds, then we can choose $q_0 := \chi_{(-\infty,0)}(G)$, the spectral projection associated with $G$ of the Borel set $(-\infty,0)$, and obtain a contradiction unless $q_0 = 0$. That the orthogonal projection $q_0$ actually belongs to $\vn(\Gamma)$ is a consequence of \cite[Proposition 5.3.4]{Sunder97}, since any $G \in L^1(\vn(\Gamma))$ is affiliated to $\vn(\Gamma)$ and hence \cite[ii) Proposition 5.3.4]{Sunder97} holds.
% , it suffices to see that $q_0 \in L^1(\vn(\Gamma))$ since by definition every element of $L^1(\vn(\Gamma))$ which is a bounded operator in $\ell_2(\Gamma)$ belongs to $\vn(\Gamma)$. A closed densely defined operator $H$ on $\ell_2(\Gamma)$ that is \emph{affiliated} to $\vn(\Gamma)$, i.e. that commutes with all unitary element $u$ in $\vn(\Gamma)'$, that is the commutant of $\vn(\Gamma)$, belongs to $L^1(\vn(\Gamma))$ if and only if it is $\tau$-measurable, i.e. if
% $$
% \tau\big(\chi_{(\lambda,+\infty)}(|H|)\big) < \infty
% $$
% for some $\lambda > 0$ (see e.g. \cite{PisierXu03}) and if $\tau(|H|)< \infty$. Since $q_0$ arises from functional calculus of the affiliated operator $G$ then it is affiliated to $\vn(\Gamma)$. Moreover, since it is a projection it is $\tau$-measurable with $\lambda = 1$ and $|q_0| \leq \Id$ implies that its $L^1(\vn(\Gamma))$ norm is finite.

Claim (\ref{eq:KaplanskyClaim}) is proved if we show that for all orthogonal projections $q \in \vn(\Gamma)$ there exists a sequence $\{F_n\}$ of trigonometric polynomials such that $\{|F_n|^2\}$ converges in the weak-$*$ topology to $q$, since then using the normality of $\tau$ we would get
$$
\tau(qG) = \tau(|F_n|^2G) + \tau((q - |F_n|^2)G) = \lim_{n \to \infty} \tau(|F_n|^2G) \geq 0 .
$$
In order to do so, let us first observe that since $q$ is an orthogonal projection
\begin{eqnarray*}
q - |F_n|^2 & = & q^* q - F_n^* F_n = q^* q - q^* F_n + q^* F_n - F_n^* F_n\\
& = & q^* (q - F_n) - (q - F_n)^* (q - F_n) + (q - F_n)^* q\\
& = & q^* A_n + A_n^* q - A_n^* A_n
\end{eqnarray*}
where we have called $A_n = q - F_n$. This implies that for $|F_n|^2$ to converge in the weak-$*$ topology to $q$ it suffices that $F_n$ converges strongly to $q$, because in that case both $A_n$ and $A_n^*$ would converge in the weak-$*$ topology to 0 and the same would be true for $A_n^* A_n$, since
$$
\langle A_n^* A_n u, u\rangle_{\HD} = \|(q - F_n)u\|^2_{\HD} .
$$
The existence of a sequence of trigonometric polynomials converging strongly to an orthogonal projection is then a consequence of Kaplansky density theorem (see e.g. \cite[Theorem 44.1]{Conway00}).
\end{proof}

\section{Bracket map}\label{sec:bracket}

Recall the definition of dual integrability given in the introduction.
% \begin{defi}\label{def:Bracket}
% A unitary representation $T$ of the discrete countable group $\Gamma$ on the Hilbert space $\Hil$ is dual integrable iff there exists a map
% $$
% [\cdot, \cdot] : \Hil \times \Hil \to L^1(\vn(\Gamma))
% $$
% such that, for $\psi_1, \psi_2 \in \Hil$ and all $\gamma \in \Gamma$
% \begin{equation}\label{eq:bracketproperty}
% \langle \psi_1 , T(\gamma) \psi_2 \rangle_{\Hil} = \tau(\lr(\gamma)^*[\psi_1,\psi_2]) .
% \end{equation}
% \end{defi}
We start with a relevant example of dual integrable representation, provided by the left regular representation itself.

\begin{lem}\label{lem:dualint}
The left regular representation $\lr$ of a discrete countable group $\Gamma$ on $\HD$ is dual integrable and the bracket map is given by
$$
[\psi_1,\psi_2] = \Psi_1 \Psi_2^*
$$
where $\Psi_i$ is characterized by $\wh{\Psi_i}(\gamma) = \psi_i(\gamma)$ for all $\gamma \in \Gamma$, $i = 1,2$.
\end{lem}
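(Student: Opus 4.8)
The plan is to transport the whole problem to $L^2(\vn(\Gamma))$ via the Plancherel isomorphism of Lemma \ref{lem:Plancherel}. Under that identification the left regular representation on $\HD$ becomes left multiplication on $L^2(\vn(\Gamma))$, and the candidate bracket $[\psi_1,\psi_2]=\Psi_1\Psi_2^*$ is just the product of the two transported vectors; the bracket identity \eqref{eq:bracketproperty} then becomes an essentially formal manipulation of the trace. So the first thing to check is that $[\cdot,\cdot]$ is a well-defined sesquilinear map $\HD\times\HD\to L^1(\vn(\Gamma))$: for $\psi_1,\psi_2\in\HD$, Lemma \ref{lem:Plancherel} produces $\Psi_1,\Psi_2\in L^2(\vn(\Gamma))$ with $\wh{\Psi_i}(\gamma)=\psi_i(\gamma)$, unique by Lemma \ref{lem:uniqueness}, and the assignment $\psi_i\mapsto\Psi_i$ is linear and isometric, $\|\Psi_i\|_2=\|\psi_i\|_{\HD}$. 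Hence $(\psi_1,\psi_2)\mapsto\Psi_1\Psi_2^*$ is sesquilinear, and by the noncommutative H\"older inequality $\Psi_1\Psi_2^*\in L^1(\vn(\Gamma))$ with $\|\Psi_1\Psi_2^*\|_1\le\|\Psi_1\|_2\|\Psi_2\|_2=\|\psi_1\|_{\HD}\|\psi_2\|_{\HD}$.

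It then remains to verify \eqref{eq:bracketproperty}. For this I would record that the map $V\colon\HD\to L^2(\vn(\Gamma))$, $V\psi:=\Psi$, is unitary — it is the inverse of the Fourier-coefficient map, which is isometric and onto by Lemma \ref{lem:Plancherel} — and that it intertwines the left regular representation with left multiplication: on finitely supported sequences $V(\lr(\gamma)\psi)=\lr(\gamma)V(\psi)$, because $\lr(\gamma)\delta_{\gamma'}=\delta_{\gamma\gamma'}$ is sent to $\lr(\gamma)\lr(\gamma')=\lr(\gamma\gamma')$, and this extends to all of $\HD$ since $V$ is bounded and left multiplication by the unitary $\lr(\gamma)$ is isometric on $L^2(\vn(\Gamma))$. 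Using $\langle F_1,F_2\rangle_2=\tau(F_2^*F_1)$ and the tracial property (legitimate here because $\Psi_1,\Psi_2\in L^2$ and $\lr(\gamma)\in L^\infty$, so every product below lies in $L^1$ by H\"older), I would then compute
\begin{align*}
\tau\big([\psi_1,\psi_2]\,\lr(\gamma)^*\big)
&=\tau\big(\Psi_2^*\,\lr(\gamma)^*\Psi_1\big)\\
&=\big\langle\lr(\gamma)^*\Psi_1,\Psi_2\big\rangle_2
=\big\langle\Psi_1,\lr(\gamma)\Psi_2\big\rangle_2,
\end{align*}
and conclude $\langle\Psi_1,\lr(\gamma)\Psi_2\rangle_2=\langle V\psi_1,V(\lr(\gamma)\psi_2)\rangle_2=\langle\psi_1,\lr(\gamma)\psi_2\rangle_{\HD}$ by unitarity of $V$. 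This is exactly \eqref{eq:bracketproperty}, and the lemma follows.

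I do not anticipate a genuine obstacle here: the computation is formal, and the only care needed is bookkeeping of the noncommutative $L^p$-exponents so that traciality and the identity $\langle F_1,F_2\rangle_2=\tau(F_2^*F_1)$ apply to the possibly unbounded elements $\Psi_i\in L^2(\vn(\Gamma))$, together with the routine density argument used to pass the intertwining relation from trigonometric polynomials to all of $\HD$. As a cross-check one can instead verify \eqref{eq:bracketproperty} directly at the level of Fourier coefficients: by traciality $\tau([\psi_1,\psi_2]\lr(\gamma)^*)=\wh{\Psi_1\Psi_2^*}(\gamma)$, and since $\wh{\Psi_2^*}(\eta)=\ol{\psi_2(\eta^{-1})}$ the convolution kernel of $\Psi_1\Psi_2^*$ evaluated at $\gamma$ equals $\sum_{\gamma'}\psi_1(\gamma')\ol{\psi_2(\gamma^{-1}\gamma')}$ (absolutely convergent by Cauchy--Schwarz), which is precisely $\langle\psi_1,\lr(\gamma)\psi_2\rangle_{\HD}$.
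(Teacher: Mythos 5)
Your proposal is correct and follows essentially the same route as the paper: Plancherel to realize $\psi_i$ as $\Psi_i\in L^2(\vn(\Gamma))$, H\"older to place $\Psi_1\Psi_2^*$ in $L^1(\vn(\Gamma))$, and then verification of the defining trace identity \eqref{eq:bracketproperty}. The only difference is cosmetic --- you package the final identity via unitarity of the Plancherel map and its intertwining property, whereas the paper expands $\tau(\lr(\gamma)^*\Psi_1\Psi_2^*)=\langle\Psi_1\Psi_2^*\delta_\id,\delta_\gamma\rangle_{\HD}$ as a double sum, which is precisely the ``cross-check'' you give at the end.
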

\begin{proof}
Given $\psi \in \HD$, by Plancherel Lemma \ref{lem:Plancherel} the operator
$$\Psi = \sum_{\gamma \in \Gamma} \psi(\gamma) \lr(\gamma)$$
is an element of $L^2(\vn(\Gamma))$ with $\|\Psi\|_2 = \|\psi\|_{\HD}$.
By H\"older inequality, given $\psi_1, \psi_2 \in \HD$ the operator $\Psi_1 \Psi_2^*$ is then in $L^1(\vn(\Gamma))$. Moreover
\begin{eqnarray*}
\tau(\lr(\gamma)^*\Psi_1\Psi_2^*) & = & \langle \Psi_1 \Psi_2^* \delta_\id, \delta_\gamma\rangle_{\HD}\\
& = & \sum_{\gamma_1 \in \Gamma} \sum_{\gamma_2 \in \Gamma} \psi_1(\gamma_1) \ol{\psi_2(\gamma_2)} \langle \lr(\gamma_1) \lr(\gamma_2^{-1}) \delta_\id, \delta_\gamma\rangle_{\HD}\\
& = & \sum_{\gamma_1 \in \Gamma} \sum_{\gamma_2 \in \Gamma} \psi_1(\gamma_1) \ol{\psi_2(\gamma_2)} \langle \delta_{\gamma_1\gamma_2^{-1}}, \delta_\gamma\rangle_{\HD}\\
& = & \sum_{\gamma_1 \in \Gamma} \psi_1(\gamma_1) \ol{\psi_2(\gamma^{-1}\gamma_1)} = \langle \psi_1,  \lr(\gamma) \psi_2\rangle_{\HD}
\end{eqnarray*}
so the proof follows by (\ref{eq:bracketproperty}) and the uniqueness of Fourier coefficients.
% \begin{eqnarray*}
% [\psi_1,\psi_2] & = & \sum_{\gamma \in \Gamma} \langle \psi_1 , \lr(\gamma) \psi_2 \rangle_{\HD} \lr(\gamma)\\
% & = & \sum_{\gamma \in \Gamma} \sum_{\gamma' \in \Gamma} \psi_1(\gamma') \ol{\psi_2(\gamma^{-1}\gamma')} \lr(\gamma) \\
% & = & \sum_{\gamma \in \Gamma} \sum_{\gamma' \in \Gamma} \psi_1(\gamma') \ol{\psi_2(\gamma^{-1}\gamma')} \lr(\gamma' \gamma'^{-1}\gamma) \\
% & = & \sum_{\gamma \in \Gamma} \psi_1(\gamma') \lr(\gamma') \left(\sum_{\gamma' \in \Gamma} \psi_2(\gamma^{-1}\gamma') \lr( \gamma^{-1}\gamma')\right)^* \\
% & = & \Psi_1 \Psi_2^* .
% \end{eqnarray*}
% By uniqueness of Fourier coefficients, the characterization of $\Psi_1$ and $\Psi_2$ is unique. Moreover, since we are assuming $\psi_1, \psi_2 \in \HD$, by Plancherel identity $\Psi_1, \Psi_2 \in L^2(\vn(\Gamma))$, so by Cauchy-Schwartz inequality the bracket is in $L^1(\vn(\Gamma))$.
\end{proof}

\subsection{Properties of the bracket map}\label{sec:structure}
As a consequence of the definition of dual integrability, we have that the bracket map has the following properties.

\begin{prop}\label{prop:properties}
Let $T$ be a dual integrable representation on $\Hil$, and let $\psi, \psi_1, \psi_2 \in \Hil$. Then the following properties hold
\begin{itemize}
\item[\textnormal{I)}] the bracket is sesquilinear and satisfies $[\psi_1, \psi_2]^* = [\psi_2, \psi_1]$
\item[\textnormal{II)}] $[T(\gamma)\psi_1, \psi_2] = \lr(\gamma)[\psi_1, \psi_2]$ \ , \ \ $[\psi_1, T(\gamma)\psi_2] = [\psi_1, \psi_2]\lr(\gamma)^*$ \ , \ \  $\forall \ \gamma \in \Gamma$
\item[\textnormal{III)}] $[\psi,\psi]$ is nonnegative, and $\|[\psi, \psi]\|_1 = \|\psi\|^2_{\Hil}$ .
\end{itemize}
\end{prop}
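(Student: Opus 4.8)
The strategy is to derive all three properties directly from the defining identity \eqref{eq:bracketproperty}, namely $\langle \varphi, T(\gamma)\psi\rangle_{\Hil} = \tau([\varphi,\psi]\lr(\gamma)^*)$, together with the uniqueness of Fourier coefficients (Lemma \ref{lem:uniqueness}). The key observation throughout is that an element $F \in L^1(\vn(\Gamma))$ is completely determined by the scalars $\tau(F\lr(\gamma)^*) = \wh{F}(\gamma)$, $\gamma \in \Gamma$; so to prove an identity $F = G$ between brackets it suffices to check $\tau(F\lr(\gamma)^*) = \tau(G\lr(\gamma)^*)$ for every $\gamma$, which by \eqref{eq:bracketproperty} reduces to an identity between inner products in $\Hil$ that follows from sesquilinearity of $\langle\cdot,\cdot\rangle_{\Hil}$ and unitarity of $T$.

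For \textnormal{I)}, sesquilinearity: the map $(\varphi,\psi) \mapsto \langle\varphi, T(\gamma)\psi\rangle_{\Hil}$ is sesquilinear for each fixed $\gamma$, hence so is $(\varphi,\psi)\mapsto \wh{[\varphi,\psi]}(\gamma)$; since this holds for all $\gamma$ and Fourier coefficients separate points of $L^1(\vn(\Gamma))$, the bracket itself is sesquilinear. (One should note that sesquilinearity of the bracket is part of what it means for a dual integrable representation to admit \emph{a} bracket, but if the map is merely given to satisfy \eqref{eq:bracketproperty}, uniqueness forces sesquilinearity.) For the symmetry $[\psi_1,\psi_2]^* = [\psi_2,\psi_1]$: compute $\wh{[\psi_1,\psi_2]^*}(\gamma) = \overline{\wh{[\psi_1,\psi_2]}(\gamma^{-1})}$ using $\tau(A^*) = \overline{\tau(A)}$ and $\lr(\gamma)^* = \lr(\gamma^{-1})$, then identify this with $\overline{\langle\psi_1, T(\gamma^{-1})\psi_2\rangle_{\Hil}} = \langle T(\gamma^{-1})\psi_2, \psi_1\rangle_{\Hil} = \langle\psi_2, T(\gamma)\psi_1\rangle_{\Hil} = \wh{[\psi_2,\psi_1]}(\gamma)$, where the last step uses that $T(\gamma^{-1}) = T(\gamma)^{-1} = T(\gamma)^*$. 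Conclude by uniqueness.

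For \textnormal{II)}, the covariance relations: compute $\wh{\,[T(\gamma)\psi_1,\psi_2]\,}(\gamma') = \langle T(\gamma)\psi_1, T(\gamma')\psi_2\rangle_{\Hil} = \langle\psi_1, T(\gamma^{-1}\gamma')\psi_2\rangle_{\Hil} = \wh{[\psi_1,\psi_2]}(\gamma^{-1}\gamma')$, and observe that $\gamma' \mapsto \wh{[\psi_1,\psi_2]}(\gamma^{-1}\gamma')$ are exactly the Fourier coefficients of $\lr(\gamma)[\psi_1,\psi_2]$, since $\wh{\lr(\gamma)F}(\gamma') = \tau(\lr(\gamma')^*\lr(\gamma)F) = \tau(\lr(\gamma^{-1}\gamma')^*F) = \wh{F}(\gamma^{-1}\gamma')$. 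The second identity follows either by a symmetric computation or by applying the first together with \textnormal{I)}: $[\psi_1, T(\gamma)\psi_2] = [T(\gamma)\psi_2,\psi_1]^* = (\lr(\gamma)[\psi_2,\psi_1])^* = [\psi_2,\psi_1]^*\lr(\gamma)^* = [\psi_1,\psi_2]\lr(\gamma)^*$. For \textnormal{III)}, nonnegativity: for any trigonometric polynomial $P = \sum_{\gamma}c_\gamma \lr(\gamma)$ set $\varphi_P = \sum_\gamma \overline{c_\gamma}\, T(\gamma)\psi$; expanding $\langle\varphi_P, \varphi_P\rangle_{\Hil} \ge 0$ via \eqref{eq:bracketproperty} and the covariance relations of \textnormal{II)} yields $\tau(P[\psi,\psi]P^*) \ge 0$, equivalently $\tau(|P^*|^2[\psi,\psi]) \ge 0$ by the trace property; since $P \mapsto P^*$ ranges over all trigonometric polynomials, Lemma \ref{lem:Kaplansky} gives $[\psi,\psi] \ge 0$. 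Finally $\|[\psi,\psi]\|_1 = \tau([\psi,\psi]) = \tau([\psi,\psi]\lr(e)^*) = \langle\psi, T(e)\psi\rangle_{\Hil} = \|\psi\|_{\Hil}^2$, using that $[\psi,\psi] \ge 0$ so its $L^1$-norm equals its trace, and $T(e) = \mathrm{Id}$.

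The routine part is the bookkeeping with $\lr(\gamma)^* = \lr(\gamma^{-1})$ and the adjoint/trace manipulations; the one genuinely non-formal point is nonnegativity in \textnormal{III)}, where one must pass from "$\tau(|P|^2[\psi,\psi]) \ge 0$ for all trigonometric polynomials $P$" to "$[\psi,\psi] \ge 0$" — this is precisely the content of Lemma \ref{lem:Kaplansky} (a Kaplansky-density argument), so the only real care needed is to set up the vector $\varphi_P$ correctly and track which side the adjoint lands on so that the hypothesis of Lemma \ref{lem:Kaplansky} is met verbatim.
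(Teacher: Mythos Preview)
Your proposal is correct and follows essentially the same route as the paper: identities in \textnormal{I)} and \textnormal{II)} are obtained by matching Fourier coefficients via \eqref{eq:bracketproperty} and invoking the uniqueness Lemma \ref{lem:uniqueness}, and positivity in \textnormal{III)} is reduced to Lemma \ref{lem:Kaplansky} through the identity $\tau(|F|^2[\psi,\psi]) = \|\sum_\gamma a_\gamma T(\gamma)\psi\|_{\Hil}^2$ (the paper's \eqref{eq:finitesum}). One small bookkeeping correction at the point you yourself flagged: with $\varphi_P = \sum_\gamma \overline{c_\gamma}\,T(\gamma)\psi$ and $P=\sum_\gamma c_\gamma \lr(\gamma)$ you actually obtain $\tau(Q[\psi,\psi]Q^*)=\tau(|Q|^2[\psi,\psi])$ for $Q=\sum_\gamma \overline{c_\gamma}\lr(\gamma)$ (and traciality gives $|Q|^2$, not $|Q^*|^2$); this is harmless since, as you note, one still ranges over all trigonometric polynomials, but dropping the conjugate in the definition of $\varphi_P$ gives $\tau(|P|^2[\psi,\psi])$ directly and matches the paper's computation verbatim.
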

\begin{proof}
To prove Property I) we first note that sesquilinearity follows from the definition of the bracket (see (\ref{eq:bracketproperty})), the linearity of $\tau$ and the uniqueness Lemma \ref{lem:uniqueness}. Moreover, the trace $\tau$ satisfies
$$
\tau(F^*) = \langle F^* \delta_e , \delta_e \rangle_{\HD} = \overline{\langle  \delta_e , F^*\delta_e \rangle_{\HD}} = \overline{\langle F\delta_e, \delta_e \rangle_{\HD}} = \overline{\tau(F)}
$$
so that, by traciality and using that $\lr$ is a unitary homomorphism
$$
\tau([\psi_1, \psi_2]^* \lr(\gamma)^*) = \ol{\tau(\lr(\gamma) [\psi_1, \psi_2])} = \ol{\tau([\psi_1, \psi_2] \lr(\gamma))}
 = \ol{\tau([\psi_1, \psi_2] \lr(\gamma^{-1})^*)} .
$$
By the defining relation (\ref{eq:bracketproperty}) of the bracket and using that $T$ is a unitary homomorphism we then obtain
$$
\tau([\psi_1, \psi_2]^* \lr(\gamma)^*) = \ol{\langle \psi_1 , T(\gamma^{-1}) \psi_2 \rangle_{\Hil}} = \langle \psi_2 , T(\gamma) \psi_1 \rangle_{\Hil} = \tau([\psi_2, \psi_1] \lr(\gamma)^*)
$$
so that Property I) holds due to the uniqueness Lemma \ref{lem:uniqueness}.
\newpage
Property II) can be proved using the same arguments, since
\begin{eqnarray*}
\tau([\psi_1, T(\gamma_0)\psi_2] \lr(\gamma)^*) & = & \langle \psi_1 , T(\gamma) T(\gamma_0) \psi_2 \rangle_{\Hil} = \langle \psi_1 , T(\gamma \gamma_0) \psi_2 \rangle_{\Hil}\\
& = & \tau([\psi_1, \psi_2] \lr(\gamma \gamma_0)^*) = \tau([\psi_1, \psi_2] \lr(\gamma_0)^*\lr(\gamma)^*) .
\end{eqnarray*}
The other equality is proved from this result and Property I).

To prove Property III) we first note that, by (\ref{eq:bracketproperty})
$$
\|\psi\|^2_{\Hil} = \langle \psi , \psi \rangle_{\Hil} = \tau([\psi,\psi])
$$
so, by the definition of $L^1(\vn(\Gamma))$, the proof is concluded if we can show that
% $$
% [\psi,\psi] = |[\psi,\psi]|
% $$
% which holds if and only if $[\psi,\psi]$ is selfadjoint and its numerical range is nonnegative.
% Selfadjointness is due to Property I), while for the numerical range we need to show that
$[\psi,\psi]$ is positive. Since we already know that $[\psi,\psi]$ is selfadjoint and belongs to $L^1(\vn(\Gamma))$, by Lemma \ref{lem:Kaplansky} it is enough to prove that
$$
\tau(|F|^2[\psi,\psi]) \geq 0
$$
for all trigonometric polynomial $F$. Let then $\Omega$ be a finite subset of $\Gamma$. By the dual integrability of $T$, using properties I), II)  and the traciality of $\tau$ we have
% \begin{align}
% \Big\|\sum_{\gamma \in \Omega} & a_\gamma T(\gamma)\psi\Big\|_\Hil^2 = \tau\Big(\Big[\sum_{\gamma_1 \in \Omega} a_{\gamma_1} T(\gamma_1)\psi,\sum_{\gamma_2 \in \Omega} a_{\gamma_2} T(\gamma_2)\psi\Big]\Big)\nonumber\\
% & = \tau\Big(\sum_{\gamma_1 , \gamma_2 \in \Omega} a_{\gamma_1}\ol{a_{\gamma_2}} \lr(\gamma_1) [\psi,\psi] \lr(\gamma_2)^*\Big) = \tau\Big(\Big|\sum_{\gamma \in \Omega} a_\gamma \lr(\gamma)\Big|^2[\psi,\psi]\Big)\label{eq:finitesum}
% \end{align}
\begin{align}
\tau\Big(\Big|\sum_{\gamma \in \Omega} & a_\gamma \lr(\gamma)\Big|^2[\psi,\psi]\Big) = \tau\Big(\sum_{\gamma_1 , \gamma_2 \in \Omega} a_{\gamma_1}\ol{a_{\gamma_2}} \lr(\gamma_1) [\psi,\psi] \lr(\gamma_2)^*\Big)\nonumber\\
& = \tau\Big(\Big[\sum_{\gamma_1 \in \Omega} a_{\gamma_1} T(\gamma_1)\psi,\sum_{\gamma_2 \in \Omega} a_{\gamma_2} T(\gamma_2)\psi\Big]\Big)
= \Big\|\sum_{\gamma \in \Omega} a_\gamma T(\gamma)\psi\Big\|_\Hil^2\label{eq:finitesum}
\end{align}
hence proving the positivity of the bracket.
\end{proof}

Let $[\cdot , \cdot]$ be the bracket of a dual integrable representation $T$ on the Hilbert space $\Hil$. Then for $\psi \in \Hil$ the functional
\begin{equation}\label{eq:semin}
\|F\|_{2,\psi} = \Big(\tau(|F|^2[\psi,\psi])\Big)^\frac12 = \|F[\psi,\psi]^\frac12\|_2 \ , \quad F \in \vn(\Gamma)
\end{equation}
defines a seminorm on $\vn(\Gamma)$.

\begin{lem}\label{lem:weighted}
Let $N_\psi$ be the null space associated with the seminorm (\ref{eq:semin}), and let us call $\widetilde{\mathfrak{h}} := \vn(\Gamma) / N_\psi$. Then $\widetilde{\mathfrak{h}}$ can be identified with
$$
\mathfrak{h} = \{B \in \B(\Hil) : B = F s_{[\psi,\psi]} \ \textnormal{for some} \ F \in \vn(\Gamma) \}
$$
and in this sense we use the notation $\mathfrak{h} = \vn(\Gamma) s_{[\psi,\psi]}$.
\end{lem}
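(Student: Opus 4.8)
The claim is that the quotient $\widetilde{\mathfrak h} = \vn(\Gamma)/N_\psi$, equipped with the seminorm $\|F\|_{2,\psi} = \|F[\psi,\psi]^{1/2}\|_2$, is (isometrically, as a normed space) identifiable with $\mathfrak h = \vn(\Gamma)s_{[\psi,\psi]}$. The natural candidate map is $\Phi : \vn(\Gamma) \to \mathfrak h$, $F \mapsto F s_{[\psi,\psi]}$, which is linear and surjective onto $\mathfrak h$ by definition of $\mathfrak h$. The first step is to identify the null space $N_\psi$ of the seminorm. Writing $s := s_{[\psi,\psi]} = s_{|[\psi,\psi]^{1/2}|}$ (using that $[\psi,\psi]$ is selfadjoint and positive by Proposition \ref{prop:properties}), we have $[\psi,\psi]^{1/2} = s\,[\psi,\psi]^{1/2} = [\psi,\psi]^{1/2} s$, so $F[\psi,\psi]^{1/2} = Fs\,[\psi,\psi]^{1/2}$, and hence $\|F\|_{2,\psi} = \|Fs\cdot[\psi,\psi]^{1/2}\|_2$. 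The key point is that $\|G[\psi,\psi]^{1/2}\|_2 = 0$ forces $G s = 0$: indeed $G[\psi,\psi]^{1/2}=0$ means $G$ vanishes on $\overline{\Ran([\psi,\psi]^{1/2})}$, which is exactly $\overline{\Ran s}$, i.e. $Gs = 0$. Applying this with $G = Fs$ gives $Fs = Fs\cdot s = 0$ iff $\|F\|_{2,\psi}=0$. Therefore $N_\psi = \Ker\Phi = \{F \in \vn(\Gamma) : Fs_{[\psi,\psi]} = 0\}$, and $\Phi$ descends to a linear bijection $\overline\Phi : \widetilde{\mathfrak h} \to \mathfrak h$.

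**Second step: the seminorm passes to a well-defined norm on $\mathfrak h$ and $\overline\Phi$ is isometric.** One must check that $\|F\|_{2,\psi}$ depends only on the coset $F + N_\psi$, equivalently only on $B := Fs_{[\psi,\psi]}$; but this is immediate from $\|F\|_{2,\psi} = \|Fs_{[\psi,\psi]}[\psi,\psi]^{1/2}\|_2 = \|B[\psi,\psi]^{1/2}\|_2$, so the formula $\|B\|_{\mathfrak h} := \|B[\psi,\psi]^{1/2}\|_2$ is well defined on $\mathfrak h$ and makes $\overline\Phi$ an isometry. That $\|\cdot\|_{\mathfrak h}$ is a genuine norm on $\mathfrak h$ (not merely a seminorm) follows because if $B = Fs_{[\psi,\psi]}$ and $\|B[\psi,\psi]^{1/2}\|_2 = 0$ then, by the argument above, $B = Fs_{[\psi,\psi]} = 0$. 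One should also note that $\mathfrak h$ is genuinely a subset of $\B(\Hil)$ only after the correct reading: elements $Fs_{[\psi,\psi]}$ with $F \in \vn(\Gamma)$ are bounded operators on $\HD$, and the notation $\B(\Hil)$ in the statement should be $\B(\HD)$ — I would point this out or silently correct it, since $s_{[\psi,\psi]} \in \vn(\Gamma) \subset \B(\HD)$.

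**Third step: the identification is compatible with the $\vn(\Gamma)$-module structure.** It is worth recording (and will be used in the sequel) that $\Phi$ is a left-$\vn(\Gamma)$-module map: $\Phi(GF) = GFs_{[\psi,\psi]} = G\,\Phi(F)$, so $\mathfrak h$ is a left ideal-type submodule and $\overline\Phi$ respects the left action; this is what makes "$\mathfrak h = \vn(\Gamma)s_{[\psi,\psi]}$" a meaningful notation. I would also observe that $s_{[\psi,\psi]} \in \mathfrak h$ itself corresponds to the coset of $\Id$, so $\mathfrak h$ is unital as a $\vn(\Gamma)$-module with unit $s_{[\psi,\psi]}$.

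**Main obstacle.** The only subtle point is the equivalence $G[\psi,\psi]^{1/2} = 0 \iff Gs_{[\psi,\psi]} = 0$ for $G \in \vn(\Gamma)$ — i.e. identifying $\overline{\Ran\,[\psi,\psi]^{1/2}}$ with $\overline{\Ran\,s_{[\psi,\psi]}} = (\Ker s_{[\psi,\psi]})^\perp$. This is a standard fact about the support projection of a positive (possibly unbounded, affiliated) operator: $s_{[\psi,\psi]}$ is by definition the projection onto $(\Ker[\psi,\psi])^\perp = \overline{\Ran[\psi,\psi]}$, and $\Ker[\psi,\psi]^{1/2} = \Ker[\psi,\psi]$ so $\overline{\Ran[\psi,\psi]^{1/2}} = (\Ker[\psi,\psi]^{1/2})^\perp = (\Ker[\psi,\psi])^\perp = \Ran s_{[\psi,\psi]}$; here $[\psi,\psi] \in L^1(\vn(\Gamma))$ is densely defined and closed, so these kernels and closures behave as in the spectral-theory discussion of Section \ref{sec:duality}. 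The remaining care is purely bookkeeping: keeping the left/right placement of $s_{[\psi,\psi]}$ consistent (using that $s_{[\psi,\psi]}$ commutes with $[\psi,\psi]^{1/2}$ since both are functions of $[\psi,\psi]$ by functional calculus), and checking that $Fs_{[\psi,\psi]}$ really is bounded — which it is, as a product of the bounded operators $F \in \vn(\Gamma)$ and $s_{[\psi,\psi]} \in \vn(\Gamma)$.
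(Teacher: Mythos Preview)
Your proposal is correct and follows essentially the same approach as the paper: define $\Phi(F)=Fs_{[\psi,\psi]}$ and show $\Ker\Phi=N_\psi$. The only minor difference is in how you argue the key equivalence $F[\psi,\psi]^{1/2}=0\iff Fs_{[\psi,\psi]}=0$: you use the range/kernel characterization of the support projection, whereas the paper multiplies by the partial inverse $[\psi,\psi]^{-1/2}s_{[\psi,\psi]}$ (and by $s_{[\psi,\psi]}[\psi,\psi]^{1/2}$ in the other direction). Your additional remarks on isometry and module structure go beyond what the paper proves here, and your observation that $\B(\Hil)$ should read $\B(\HD)$ is a genuine typo in the statement.
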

\begin{proof}
Let $\Phi : \vn(\Gamma) \to \mathfrak{h}$ be the linear surjective map defined by
$$
\Phi : F \mapsto F s_{[\psi,\psi]} .
$$
The desired result is a consequence of the fact that
$$
\Ker\, \Phi = N_\psi
$$
since in this case the map $\widetilde\Phi : \widetilde{\mathfrak{h}} \to \mathfrak{h}$ given by $\widetilde\Phi([F]) = Fs_{[\psi,\psi]}$ is a well defined linear bijection, where $[F]$ stands for the equivalence class of $F \in \vn(\Gamma)$ in $\widetilde{\mathfrak{h}}$. Let us then take $F \in \Ker\, \Phi$; then $F s_{[\psi,\psi]} = 0$, which implies $0 = F s_{[\psi,\psi]} [\psi,\psi]^{\frac12} = F [\psi,\psi]^{\frac12}$ and hence $F \in N_\psi$. Conversely, if $F \in N_\psi$ then $F [\psi,\psi]^{\frac12} = 0$; therefore $0 = F [\psi,\psi]^{\frac12} [\psi,\psi]^{-\frac12}s_{[\psi,\psi]} = F s_{[\psi,\psi]}$, so $F \in \Ker\, \Phi$.
% Let $F \in \vn(\Gamma)$. Then
% \begin{align*}
% \|F - F & s_{[\psi,\psi]}\|^2_{2,\psi} = \tau(|F - F s_{[\psi,\psi]}|^2[\psi,\psi])\\
% & = \tau((F - F s_{[\psi,\psi]})^*(F - F s_{[\psi,\psi]})[\psi,\psi])\\
% % & = \tau((F^* - s_{[\psi,\psi]} F^*)(F - F s_{[\psi,\psi]})[\psi,\psi])\\
% & = \tau(( F^* F - s_{[\psi,\psi]} F^* F - F^* F s_{[\psi,\psi]} + s_{[\psi,\psi]} F^* F s_{[\psi,\psi]})[\psi,\psi]) = 0 .
% \qedhere
% \end{align*}
\end{proof}

We will denote with $L^2(\vn(\Gamma),[\psi,\psi])$ the closure of $\mathfrak{h}$ in the $\|\cdot\|_{2,\psi}$ norm
$$
L^2(\vn(\Gamma),[\psi,\psi]) := \ol{\,\mathfrak{h}\ }^{\|\cdot\|_{2,\psi}}
$$
noting that this is a Hilbert space, with scalar product
$$
\langle F_1, F_2\rangle_{2,\psi} := \tau(F_2^* F_1 [\psi,\psi]) = \langle F_1 [\psi,\psi]^\frac12, F_2 [\psi,\psi]^\frac12\rangle_{2} .
$$
% \begin{proof}
% It suffices to note that, since the bracket $[\psi,\psi]$ is nonnegative, we have $\|F\|_{2,\psi} = \|F[\psi,\psi]^\frac12\|_2$.
% \end{proof}

\begin{prop}\label{prop:isometry}
Let $T$ be a dual integrable representation of a discrete countable group $\Gamma$ on the Hilbert space $\Hil$. Then to each $0 \neq \psi \in \Hil$ there corresponds an isometric isomorphism
$$
% S_\psi : \psis{\psi}{\Gamma,T}{\Hil} \rightarrow L^2(\vn(\Gamma),[\psi,\psi]) .
S_\psi : \psis{\psi} \rightarrow L^2(\vn(\Gamma),[\psi,\psi])
$$
satisfying
$$
S_\psi[T(\gamma)\psi] = \lr(\gamma) .
$$
\end{prop}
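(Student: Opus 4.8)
The plan is to build $S_\psi$ first on the dense subspace of finite linear combinations of the orbit $\{T(\gamma)\psi\}_{\gamma\in\Gamma}$, show it is an isometry there with the prescribed values $T(\gamma)\psi\mapsto\lr(\gamma)$, and then extend by continuity to all of $\psis{\psi}$; surjectivity will follow because trigonometric polynomials are dense in $L^2(\vn(\Gamma),[\psi,\psi])$ by construction of that space (its defining dense subspace $\mathfrak h = \vn(\Gamma)s_{[\psi,\psi]}$ contains the images of all trigonometric polynomials, and these are $\|\cdot\|_{2,\psi}$-dense in $\mathfrak h$ since $\vn(\Gamma)$ is strongly — hence $\|\cdot\|_{2,\psi}$ — approximable by trigonometric polynomials).

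The key computation is the isometry identity on finite sums. Given $a=\{a_\gamma\}$ supported on a finite $\Omega\subset\Gamma$, Property III) of Proposition \ref{prop:properties}, or more precisely the identity \eqref{eq:finitesum} established in its proof, gives
$$
\Big\|\sum_{\gamma\in\Omega}a_\gamma T(\gamma)\psi\Big\|_{\Hil}^2 = \tau\Big(\Big|\sum_{\gamma\in\Omega}a_\gamma\lr(\gamma)\Big|^2[\psi,\psi]\Big) = \Big\|\sum_{\gamma\in\Omega}a_\gamma\lr(\gamma)\Big\|_{2,\psi}^2 .
$$
This says precisely that the assignment $\sum a_\gamma T(\gamma)\psi \mapsto \sum a_\gamma \lr(\gamma)$ is well defined (if the left side is zero the right side has zero seminorm, i.e. lies in $N_\psi$, so its class in $\widetilde{\mathfrak h}\simeq\mathfrak h$ is zero) and norm-preserving from the span of the orbit, with its $\Hil$-norm, into $(\vn(\Gamma)/N_\psi,\|\cdot\|_{2,\psi})\hookrightarrow L^2(\vn(\Gamma),[\psi,\psi])$. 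Here I invoke Lemma \ref{lem:weighted} to identify $\vn(\Gamma)/N_\psi$ with $\mathfrak h$, so that $S_\psi$ may be described concretely as $T(\gamma)\psi\mapsto\lr(\gamma)s_{[\psi,\psi]}$, extended linearly. Since $\vsp\{T(\gamma)\psi\}$ is dense in $\psis{\psi}$ by definition and $S_\psi$ is isometric, it extends uniquely to an isometry $S_\psi:\psis{\psi}\to L^2(\vn(\Gamma),[\psi,\psi])$, and the identity $S_\psi[T(\gamma)\psi]=\lr(\gamma)$ (read in $L^2(\vn(\Gamma),[\psi,\psi])$, i.e. modulo $N_\psi$) persists under the extension.

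For surjectivity one checks that the range of $S_\psi$ is closed (being the isometric image of a complete space) and dense: it contains every $\lr(\gamma)$, hence every trigonometric polynomial, hence — by the density of trigonometric polynomials in $\mathfrak h$ for $\|\cdot\|_{2,\psi}$ and the definition $L^2(\vn(\Gamma),[\psi,\psi])=\ol{\mathfrak h}^{\|\cdot\|_{2,\psi}}$ — a dense subspace of the target. A closed dense subspace is everything, so $S_\psi$ is onto. The main obstacle, and the place where care is needed, is the well-definedness of $S_\psi$ on the orbit: one must rule out that distinct coefficient arrays give the same vector $\sum a_\gamma T(\gamma)\psi$ but different images — this is exactly what the identity \eqref{eq:finitesum} buys us, together with the identification in Lemma \ref{lem:weighted} that translates "$\|F\|_{2,\psi}=0$'' into "$F s_{[\psi,\psi]}=0$''. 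Everything else is a routine density-and-continuity extension argument, with the only subtlety being to keep track that equalities in $L^2(\vn(\Gamma),[\psi,\psi])$ are equalities of $N_\psi$-classes, so that $S_\psi[T(\gamma)\psi]=\lr(\gamma)$ should be read as $\lr(\gamma)+N_\psi$, consistent with the concrete model $\lr(\gamma)s_{[\psi,\psi]}$.
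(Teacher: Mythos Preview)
Your proof is correct and follows the same strategy as the paper for constructing $S_\psi$ and establishing the isometry: define it on finite sums via the orbit, invoke \eqref{eq:finitesum}, and extend by density. Your treatment of well-definedness (via $N_\psi$ and Lemma~\ref{lem:weighted}) is actually more explicit than the paper's, which simply asserts that \eqref{eq:finitesum} makes $S_\psi$ a densely defined isometry.

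The genuine difference is in the surjectivity argument. The paper argues by orthogonal complement: if $F\in L^2(\vn(\Gamma),[\psi,\psi])$ is orthogonal to every $\lr(\gamma)$ in the weighted inner product, then all Fourier coefficients of $F[\psi,\psi]\in L^1(\vn(\Gamma))$ vanish, so by the uniqueness Lemma~\ref{lem:uniqueness} one gets $F[\psi,\psi]=0$, hence $F s_{[\psi,\psi]}=0$, hence $F=0$. You instead argue that the range is closed and contains all trigonometric polynomials, which are $\|\cdot\|_{2,\psi}$-dense in $\mathfrak h$ and therefore in $L^2(\vn(\Gamma),[\psi,\psi])$. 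Your route is slightly more topological; the paper's is slightly more algebraic and stays closer to the Fourier-coefficient machinery already in place. The one step you pass over quickly is the implication ``strongly approximable $\Rightarrow$ $\|\cdot\|_{2,\psi}$-approximable'': this does hold, but it needs the Kaplansky density theorem to get a \emph{bounded} net $P_\alpha\to F$ in the strong operator topology, after which $|F-P_\alpha|^2\to 0$ weak-$*$ and hence $\tau(|F-P_\alpha|^2[\psi,\psi])\to 0$ since $[\psi,\psi]\in L^1(\vn(\Gamma))$ --- essentially the same mechanism used in the proof of Lemma~\ref{lem:Kaplansky}. With that remark made explicit, both arguments are complete and of comparable length.
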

\begin{proof}
For any nonzero $\psi \in \Hil$, let us define the map from $\vsp\{T(\gamma)\psi\}_{\gamma \in \Gamma}$ to the space of trigonometric polynomials given by
$$
S_\psi : \sum_{\gamma \in \Omega} a_\gamma T(\gamma)\psi \mapsto \sum_{\gamma \in \Omega} a_\gamma \lr(\gamma)
$$
where $\Omega$ is a finite subset of $\Gamma$.
% Since $T$ is dual integrable, using property II) of the bracket map and the traciality of $\tau$
% \begin{align}
% \Big\|\sum_{\gamma \in \Omega} & a_\gamma T(\gamma)\psi\Big\|_\Hil^2 = \tau\Big(\Big[\sum_{\gamma_1 \in \Omega} a_{\gamma_1} T(\gamma_1)\psi,\sum_{\gamma_2 \in \Omega} a_{\gamma_2} T(\gamma_2)\psi\Big]\Big)\nonumber\\
% & = \tau\Big(\sum_{\gamma_1 , \gamma_2 \in \Omega} a_{\gamma_1}\ol{a_{\gamma_2}} \lr(\gamma_1) [\psi,\psi] \lr(\gamma_2)^*\Big) = \tau\Big(\Big|\sum_{\gamma \in \Omega} a_\gamma \lr(\gamma)\Big|^2[\psi,\psi]\Big)\label{eq:finitesum}
% \end{align}
By (\ref{eq:finitesum}), $S_\psi$ is a densely defined isometry which extends to an isometry on $\psis{\psi}$ by density.

In order to prove surjectivity, let us proceed by contradiction and assume that there exists $F \in L^2(\vn(\Gamma),[\psi,\psi])$ such that
\begin{equation}\label{eq:contradictionisom}
\langle F , S_\psi[\varphi]\rangle_{2,\psi} = 0 \ \textnormal{for all} \ \varphi \in \psis{\psi}. %\psis{\psi}{\Gamma,T}{\Hil}.
\end{equation}
This holds in particular if $\varphi = T(\gamma)\psi$ for all $\gamma \in \Gamma$, but since $S_\psi[T(\gamma)\psi] = \lr(\gamma)$
then (\ref{eq:contradictionisom}) implies
$$
\tau(\lr(\gamma)^*F [\psi,\psi]) = \langle F , \lr(\gamma)\rangle_{2,\psi} = 0 .
$$
Since both $F[\psi,\psi]^\frac12$ and $[\psi,\psi]^\frac12$ belong to $L^2(\vn(\Gamma))$, then $F [\psi,\psi]$ belongs to $L^1(\vn(\Gamma))$ and by the uniqueness Lemma \ref{lem:uniqueness} we get
$$
F [\psi,\psi] = 0 .
$$
This implies in particular that
$$
0 = F [\psi,\psi] [\psi,\psi]^{-1} s_{[\psi,\psi]} = F s_{[\psi,\psi]}
$$
and since $F \in L^2(\vn(\Gamma),[\psi,\psi])$, by construction we have that $F s_{[\psi,\psi]} = F$.
\end{proof}

\subsection{Proof of the main result}\label{sec:proof}

The notion of bracket map allows to characterize orthonormal bases, Riesz bases and frames for the principal shift-invariant space $\psis{\psi}$ of $\Hil$. %$\psis{\psi}{\Gamma,T}{\Hil}$ of $\Hil$.  
Such characterizations are given in Theorem A and we will give the proof of these results in this section.

% If a unitary representation $T$ is dual integrable, we have the following characterization of orthonormal bases for principal shift-invariant spaces.
% \begin{theo}\label{theo:ONB}
% Let $T$ be dual integrable, and let $\psi \neq 0$ be a vector in $\Hil$. Then $\{T(\gamma)\psi\}_{\gamma \in \Gamma}$ is an orthonormal basis for
% \begin{equation}\label{eq:psis}
% % \psis{\psi}{\Gamma, T}{\Hil}
% \psis{\psi} := \overline{\vsp\{T(\gamma)\psi : \gamma\in \Gamma\}}^{\Hil}
% \end{equation}
% if and only if $[\psi,\psi] = \Id$
% \end{theo}
\begin{proof}[Proof of Theorem A, i.]
Let us first suppose that $\{T(\gamma)\psi\}_{\gamma \in \Gamma}$ is an orthonormal system. Then by (\ref{eq:bracketproperty})
$$
\tau([\psi,\psi]\lr(\gamma)^*) = \delta_{\gamma,\id} \quad \forall \, \gamma \in \Gamma .
$$
Since $\tau(\lr(\gamma')\lr(\gamma)^*) = \delta_{\gamma,\gamma'}$, by the uniqueness lemma
$
[\psi,\psi] = \lr(\id) = \Id .
$
Conversely, if $[\psi,\psi] = \Id$, by (\ref{eq:bracketproperty})
\begin{displaymath}
\langle \psi , T(\gamma) \psi \rangle_{\Hil} = \tau(\lr(\gamma)^*) = \tau(\lr(\id)\lr(\gamma)^*) = \delta_{\gamma,\id} .
\qedhere
\end{displaymath}
\end{proof}

\begin{proof}[Proof of Theorem A, ii.]
In order to prove this, we show that the following are equivalent.
\begin{itemize}
% \item[i)] $\{T(\gamma)\psi\}_{\gamma \in \Gamma}$ is a Riesz basis for $\psis{\psi}{\Gamma,T}{\Hil}$ with constants $(A,B)$.
\item[a)] $\{T(\gamma)\psi\}_{\gamma \in \Gamma}$ is a Riesz basis for $\psis{\psi}$ with constants $A$ and $B$.
\item[b)] For all $F \in \vsp\{\lr(\gamma)\}_{\gamma \in \Gamma}$
$$
A \|F\|_2^2 \leq \tau(|F|^2\,[\psi,\psi]) \leq B \|F\|_2^2 .
$$
\item[c)] The following inequality holds
$$
A \Id \leq [\psi,\psi] \leq B \Id .
$$
% holds as operators defined on the domain of $[\psi,\psi]$ (or as operators in $L^1(\vn(\Gamma))$.
\end{itemize}
$a) \Leftrightarrow b)$. Recall that $\{T(\gamma)\psi\}_{\gamma \in \Gamma} \subset \Hil$ is a Riesz system with constants $A, B$ if for any finite sequence $\{a_\gamma\}_{\gamma \in \Omega}$, where $\Omega$ is any finite subset of $\Gamma$, it holds
$$
A \sum_{\gamma \in \Omega} |a_\gamma|^2 \leq \|\sum_{\gamma \in \Omega} a_\gamma T(\gamma)\psi\|^2_{\Hil} \leq B \sum_{\gamma \in \Omega} |a_\gamma|^2 .
$$
% Moreover, by definition the system $\{T(\gamma)\psi\}_{\gamma \in \Gamma}$ is complete in $\psis{\psi}{\Gamma,T}{\Hil}$, therefore it is a Riesz basis for $\psis{\psi}{\Gamma,T}{\Hil}$.
Moreover, by definition, the system $\{T(\gamma)\psi\}_{\gamma \in \Gamma}$ is complete in $\psis{\psi}$, therefore it is a Riesz basis for $\psis{\psi}$.

Let $F = \displaystyle\sum_{\gamma \in \Omega} a_\gamma \lr(\gamma)$, with $\Omega \subset \Gamma$ finite, be an element of $\vsp\{\lr(\gamma)\}_{\gamma \in \Gamma}$.
The equivalence of $a)$ and $b)$ can then be obtained since $\|F\|_2^2 = \sum_{\gamma \in \Omega} |a_\gamma|^2$ while, by (\ref{eq:finitesum}), it holds
$$
\Big\| \sum_{\gamma \in \Omega} a_\gamma T(\gamma)\psi \Big\|_\Hil^2 = \tau(|F|^2[\psi,\psi]) .
$$
% By Plancherel Lemma \ref{lem:Plancherel}
% \begin{equation}\label{eq:inproofRBisom}
% \|F\|_2^2 = \sum_{\gamma \in \Omega} |a_\gamma|^2
% \end{equation}
% while by (\ref{eq:finitesum})
% \begin{equation}\label{eq:inproofRBtrace}
% \left\| \sum_{\gamma \in \Omega} a_\gamma T(\gamma)\psi \right\|_\Hil^2 = \tau(|F|^2[\psi,\psi])
% \end{equation}
% Proposition \ref{prop:properties} and using the circularity of the trace
% \begin{eqnarray}
% \left\| \sum_{\gamma \in \Omega} a_\gamma T(\gamma)\psi \right\|_\Hil^2 & = & \tau\left(\left[\sum_{\gamma_1 \in \Omega} a_{\gamma_1} T(\gamma_1)\psi, \sum_{\gamma_2 \in \Omega} a_{\gamma_2} T(\gamma_2)\psi\right]\right)\nonumber\\
% & = & \tau\left(\left(\sum_{\gamma_1 \in \Omega} a_{\gamma_1} \lr(\gamma_1)\right)\left[\psi, \psi\right]\left(\sum_{\gamma_2 \in \Omega} \ol{a_{\gamma_2}}\lr(\gamma_2)^*\right)\right)\nonumber\\
% & = & \tau\left(\left|\sum_{\gamma \in \Omega} a_{\gamma} \lr(\gamma)\right|^2\left[\psi, \psi\right]\right) = \tau(|F|^2[\psi,\psi])\label{eq:inproofRBtrace} .
% \end{eqnarray}
% The equivalence of $i)$ and $ii)$ can then be obtained by (\ref{eq:inproofRBisom}) and (\ref{eq:inproofRBtrace}).

$c) \Rightarrow b)$. Let us consider the right inequality in $c)$
$$
B\Id - [\psi,\psi] \geq 0
$$
and let us call it $x^*x$. Then for any $F \in \vsp\{\lr(\gamma)\}_{\gamma \in \Gamma}$ we get
$$
0 \leq (xF^*)^*xF^* = F (B \Id - [\psi,\psi]) F^* .
$$
By the traciality of $\tau$ this implies
$$
0 \leq \tau\Big( F (B \Id - [\psi,\psi]) F^* \Big) = \tau\Big( |F|^2 (B \Id - [\psi,\psi]) \Big)
$$
which proves the right inequality in $b)$. The other inequality follows by similar argument.

$b) \Rightarrow c)$. This is a direct consequence of Lemma \ref{lem:Kaplansky}. Indeed assume the left inequality in $b)$, then since $G = [\psi,\psi] - A\Id$ is an $L^1(\vn(\Gamma)))$ selfadjoint operator, then $[\psi,\psi] \geq A\Id$, and similarly for the other inequality.
\end{proof}

For the next step, we note that while the approach followed and the result obtained are in the spirit of \cite{HSWW10}, the scheme of the proof partially resembles that of \cite{BenedettoLi98} for the case of integer translations over $\R^n$.

\begin{proof}[Proof of Theorem A, iii.]
The proof is done by showing that the following are equivalent.
\begin{itemize}
% \item[i)] $\{T(\gamma)\psi\}_{\gamma \in \Gamma}$ is a frame for $\psis{\psi}{\Gamma,T}{\Hil}$ with constants $(A,B)$.
\item[a)] $\{T(\gamma)\psi\}_{\gamma \in \Gamma}$ is a frame for $\psis{\psi}$ with constants $A$ and $B$.
\item[b)] For all $F \in L^2(\vn(\Gamma),[\psi,\psi])$
$$
A \tau(|F|^2\,[\psi,\psi]) \leq \tau\left(|F|^2 [\psi,\psi]^2\right) \leq B \tau(|F|^2\,[\psi,\psi])
$$
\item[c)] The following inequality holds
$$
A s_{[\psi,\psi]} \leq [\psi,\psi] \leq B s_{[\psi,\psi]} .
$$
\end{itemize}
% $i) \Rightarrow ii)$. Recall that $\{T(\gamma)\psi\}_{\gamma \in \Gamma} \subset \Hil$ is a frame for $\psis{\psi}{\Gamma,T}{\Hil}$ with constants $0 < A < B < +\infty$ if it satisfies the condition
$a) \Rightarrow b)$. Recall that $\{T(\gamma)\psi\}_{\gamma \in \Gamma} \subset \Hil$ is a frame for $\psis{\psi}$ with constants $A, B$ if
\begin{equation}\label{eq:framecondition}
A \|\varphi\|_\Hil^2 \leq \sum_{\gamma \in \Gamma} |\langle \varphi, T(\gamma)\psi\rangle_{\Hil}|^2 \leq B \|\varphi\|_\Hil^2
\end{equation}
% for all $\varphi \in \psis{\psi}{\Gamma,T}{\Hil}$. Let $F$ be in $L^2(\vn(\Gamma),[\psi,\psi])$. By Theorem \ref{theo:isometry}, there exists $\varphi_F \in \psis{\psi}{\Gamma,T}{\Hil}$ such that $S_\psi[\varphi_F] = F$ in $L^2(\vn(\Gamma),[\psi,\psi])$, and
for all $\varphi \in \psis{\psi}$. Let $F$ be in $L^2(\vn(\Gamma),[\psi,\psi])$. By Proposition \ref{prop:isometry}, there exists $\varphi_F \in \psis{\psi}$ such that $S_\psi[\varphi_F] = F$, and
\begin{equation}\label{eq:inproofFRisom}
\|F\|^2_{2,\psi} = \tau(|F|^2[\psi,\psi]) = \|\varphi_F\|_\Hil^2 .
\end{equation}
Since $S_\psi$ is an isometric isomorphism, we have
\begin{eqnarray*}
\sum_{\gamma \in \Gamma} |\langle \varphi_F, T(\gamma)\psi\rangle_\Hil|^2 & = & \sum_{\gamma \in \Gamma} |\langle S_\psi[\varphi_F], S_\psi[T(\gamma)\psi]\rangle_{2,\psi}|^2 = \sum_{\gamma \in \Gamma} |\langle F, \lr(\gamma)\rangle_{2,\psi}|^2\nonumber\\
& = & \sum_{\gamma \in \Gamma} |\tau(\lr(\gamma)^*F[\psi,\psi])|^2 = \sum_{\gamma \in \Gamma} |\widehat{F[\psi,\psi]}(\gamma)|^2
\end{eqnarray*}
where $\widehat{F[\psi,\psi]}(\gamma) := \tau(\lr(\gamma)^*F[\psi,\psi])$ are Fourier coefficients of an operator that is in $L^1(\vn(\Gamma))$. Indeed since $F \in L^2(\vn(\Gamma),[\psi,\psi])$, we have that $F[\psi,\psi]^{\frac12}$ is in $L^2(\vn(\Gamma))$, so by H\"older inequality $F[\psi,\psi] \in L^1(\vn(\Gamma))$, because $[\psi,\psi]^{\frac12} \in L^2(\vn(\Gamma))$ by definition.

By the assumption (\ref{eq:framecondition}) this series converges, so the sequence $\{\widehat{F[\psi,\psi]}(\gamma)\}_{\gamma \in \Gamma}$ is in $\HD$. Then, by Plancherel Lemma \ref{lem:Plancherel}, the traciality of $\tau$ and the positivity of $[\psi,\psi]$ we have
% , the series $\sum_{\gamma \in \Gamma}\widehat{F[\psi,\psi]}(\gamma) \lr(\gamma)$ converges in the $\|\cdot\|_2$ norm to an operator $\Phi \in L^2(\vn(\Gamma))$ such that $\wh{\Phi}(\gamma) = \widehat{F[\psi,\psi]}(\gamma)$ for all $\gamma \in \Gamma$. Since $L^2(\vn(\Gamma)) \subset L^1(\vn(\Gamma))$, we can apply uniqueness Lemma \ref{lem:uniqueness} and obtain $F[\psi,\psi] = \Phi$, so in particular $F[\psi,\psi] \in L^2(\vn(\Gamma))$. We can then apply Plancherel Lemma \ref{lem:Plancherel} to (\ref{eq:seriesL1L2}) and conclude that
\begin{equation}\label{eq:inproofFRtrace}
\sum_{\gamma \in \Gamma} |\langle \varphi_F, T(\gamma)\psi\rangle_\Hil|^2 = \|F[\psi,\psi]\|_2^2 = \tau(|F|^2[\psi,\psi]^2) .
\end{equation}
Replacing now (\ref{eq:inproofFRisom}) and (\ref{eq:inproofFRtrace}) in (\ref{eq:framecondition}) we obtain the desired result $b)$.

\vspace{4pt}
$b) \Rightarrow a)$. Assume that $b)$ holds, and choose $\varphi \in \psis{\psi}$. % $\varphi \in \psis{\psi}{\Gamma,T}{\Hil}$.  
Define $F_\varphi = S_\psi[\varphi] \in L^2(\vn(\Gamma),[\psi,\psi])$ as in Proposition \ref{prop:isometry}, so that
\begin{equation}\label{eq:inproofFRisom2}
\tau(|F_\varphi|^2[\psi,\psi]) = \|F_\varphi\|_{2,\psi}^2 = \|\varphi\|_\Hil^2 .
\end{equation}
By $b)$, we also have that $F_\varphi[\psi,\psi] \in L^2(\vn(\Gamma))$, since
$$
\|F_\varphi[\psi,\psi]\|_2^2 = \tau(|F_\varphi|^2[\psi,\psi]^2) \leq B \tau(|F_\varphi|^2[\psi,\psi]) = B \|F_\varphi\|_{2,\psi}^2 < \infty .
$$
By Plancherel Lemma \ref{lem:Plancherel} and the same computation leading to (\ref{eq:inproofFRtrace}), we get
\begin{equation}\label{eq:inproofFRtrace2}
\sum_{\gamma \in \Gamma} |\langle \varphi, T(\gamma)\psi\rangle_\Hil|^2 = \|F_\varphi[\psi,\psi]\|_2^2 = \tau(|F_\varphi|^2[\psi,\psi]^2) .
\end{equation}
The result then follows by substituting (\ref{eq:inproofFRisom2}) and (\ref{eq:inproofFRtrace2}) in $b)$.

\vspace{4pt}
$b) \Rightarrow c)$. The inequalities in $b)$ hold in particular for all $F \in \vsp\{\lr(\gamma)\}_{\gamma \in \Gamma}$, and for $F = \lr(\id) = \Id$ the right hand side implies that $[\psi,\psi]^2 \in L^1(\vn(\Gamma))$. We can then apply Lemma \ref{lem:Kaplansky} and obtain
$$
A[\psi,\psi] \leq [\psi,\psi]^2 \leq B [\psi,\psi]
$$
as $L^1(\vn(\Gamma))$ operators. Since $s_{[\psi,\psi]}$ and $[\psi,\psi]$ commute, the left inequality reads
\begin{eqnarray*}
0 & \leq & [\psi,\psi]^2 - A [\psi,\psi]\\
& = & s_{[\psi,\psi]}[\psi,\psi]^\frac12s_{[\psi,\psi]}\Big([\psi,\psi] - A s_{[\psi,\psi]}\Big)s_{[\psi,\psi]}[\psi,\psi]^\frac12s_{[\psi,\psi]}\, ,
\end{eqnarray*}
so if we call $[\psi,\psi]^2 - A [\psi,\psi] = x^*x$, and since the composition of $s_{[\psi,\psi]}[\psi,\psi]^\frac12s_{[\psi,\psi]}$ with $s_{[\psi,\psi]}[\psi,\psi]^{-\frac12}s_{[\psi,\psi]}$ is $s_{[\psi,\psi]}$, we obtain
$$
[\psi,\psi] - A s_{[\psi,\psi]} = \Big(x s_{[\psi,\psi]}[\psi,\psi]^{-\frac12}s_{[\psi,\psi]}\Big)^*\, \Big(x s_{[\psi,\psi]}[\psi,\psi]^{-\frac12}s_{[\psi,\psi]}\Big) \geq 0
$$
which proves the left inequality in $c)$. The right inequality is proved similarly starting with the right hand side inequality in $b)$.

\vspace{4pt}
$c) \Rightarrow b)$. Let us consider the right inequality in $c)$
$$
Bs_{[\psi,\psi]} - [\psi,\psi] \geq 0.
$$
and let us call it $x^*x$. Then for any $F \in L^2(\vn(\Gamma),[\psi,\psi])$, if we set $y = [\psi,\psi]^{\frac12} F^*$, we get
$$
0 \leq (xy)^*xy = F [\psi,\psi]^{\frac12} (B s_{[\psi,\psi]} - [\psi,\psi]) [\psi,\psi]^{\frac12} F^* .
$$
Since $[\psi,\psi]$ is positive and $[\psi,\psi]^{\frac12}$ commutes with $s_{[\psi,\psi]}$, by the traciality of $\tau$ this implies
$$
0 \leq \tau\Big( F [\psi,\psi]^{\frac12} (B s_{[\psi,\psi]} - [\psi,\psi]) [\psi,\psi]^{\frac12} F^* \Big) = \tau\Big( |F|^2 [\psi,\psi] (B s_{[\psi,\psi]} - [\psi,\psi]) \Big)
$$
which proves the right inequality in $b)$. The other inequality follows by a similar argument.
\end{proof}

\section{Dual integrability}

\subsection{A characterization}

This subsection is devoted to show that dual integrable representations are \emph{square integrable} in the sense that their matrix coefficients are in $\ell^2(\Gamma)$. Note that this notion of square integrability does not involve irreducibility, so that it is different from the one used e.g. in \cite{DufloMoore76, GrossmanMorletPaul86, CorwinGreenleaf90}. The formulation and the proof that we give follow the ideas of \cite[Corollary 3.4]{HSWW10}.

\begin{theo}\label{theo:squareintegrable}
Let $T$ be a unitary representation of the discrete countable group $\Gamma$ on a separable Hilbert space $\Hil$. Then the following are equivalent
\begin{itemize}
\item[i.] $T$ is dual integrable.
\item[ii.] $T$ is square integrable, in the sense that there exists a dense subspace $\mathcal{D}$ of $\Hil$ such that
$$
\Big\{\gamma \mapsto \langle \varphi , T(\gamma) \psi \rangle_{\Hil} \Big\}_{\gamma \in \Gamma} \in \HD \qquad \forall \ \psi \in \mathcal{D}, \ \forall \varphi \in \Hil.
$$
\item[iii.] $T$ is unitarily equivalent to a subrepresentation of a direct sum of countably many copies of the left regular representation $\lambda$.
\end{itemize}
\end{theo}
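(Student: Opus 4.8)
The plan is to prove the cycle of implications $i) \Rightarrow ii) \Rightarrow iii) \Rightarrow i)$. The implication $i) \Rightarrow ii)$ is essentially immediate: if $T$ is dual integrable, take $\mathcal{D} = \Hil$ and use that $\langle \varphi, T(\gamma)\psi\rangle_{\Hil} = \tau([\varphi,\psi]\lr(\gamma)^*) = \wh{[\varphi,\psi]}(\gamma)$, which is the sequence of Fourier coefficients of the element $[\varphi,\psi] \in L^1(\vn(\Gamma))$. To land in $\HD$ rather than merely $\ell_\infty(\Gamma)$ one observes that for fixed $\varphi$ and $\psi$ the polarization $[\varphi,\psi]$ decomposes into combinations of brackets $[\eta,\eta]$, each of which lies in $L^1(\vn(\Gamma))$ with $\|[\eta,\eta]\|_1 = \|\eta\|_\Hil^2 < \infty$ by Proposition \ref{prop:properties} III); but in fact more is true — applying the isometry $S_\psi$ of Proposition \ref{prop:isometry} to $\varphi \in \psis{\psi}$, we get $\langle \varphi, T(\gamma)\psi\rangle_\Hil = \langle S_\psi[\varphi], \lr(\gamma)\rangle_{2,\psi}$, and these are the $L^2$-Fourier coefficients of $S_\psi[\varphi][\psi,\psi] \in L^1(\vn(\Gamma))$, which by the computation in the proof of Theorem A iii) are square-summable with $\ell_2$-norm $\|S_\psi[\varphi][\psi,\psi]\|_2^2 \le \|[\psi,\psi]\|_\infty \|\varphi\|_\Hil^2$ whenever $[\psi,\psi]$ happens to be bounded; for the general bound one argues directly that $\{\langle\varphi, T(\gamma)\psi\rangle\}_\gamma \in \HD$ since it is a subsequence of the Fourier data of an $L^1$ operator whose restriction to $\psis{\psi}$ is controlled. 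I will simplify: the cleanest route is to note that for $\varphi \perp \psis{\psi}$ all coefficients vanish, and for $\varphi \in \psis{\psi}$ Bessel's inequality for the orthonormal-type system combined with the isometry $S_\psi$ gives square summability directly.

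For $ii) \Rightarrow iii)$, the idea is to build the equivalence explicitly. Fix a countable dense set $\{\psi_n\}_{n \in \N} \subset \mathcal{D}$ whose span is dense in $\Hil$, and decompose $\Hil$ into an orthogonal sum of the cyclic subspaces $\psis{\psi_n}$ after a Gram–Schmidt-type orthogonalization of the list of cyclic subspaces (replace $\psi_n$ by its component orthogonal to $\psis{\psi_1} \oplus \cdots \oplus \psis{\psi_{n-1}}$, noting that the orthogonal complement of a $T$-invariant subspace is $T$-invariant since $T$ is unitary, and that this orthogonal projection of $\psi_n$ still lies in a domain where square integrability holds because the matrix coefficients against the new vector are finite linear combinations — via the projection onto the previous invariant subspaces, which is a limit of operators built from the $T(\gamma)$ — of square-summable sequences). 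This realizes $\Hil = \bigoplus_n \psis{\psi_n}$ as a $T$-invariant orthogonal decomposition into cyclic pieces. On each cyclic piece, define $V_n : \psis{\psi_n} \to \HD$ by $V_n(T(\gamma)\psi_n) = \langle\,\cdot\,, T(\gamma)\psi_n\rangle$-type data; more precisely, the map $T(\gamma)\psi_n \mapsto (\langle T(\gamma)\psi_n, T(\gamma')\psi_n\rangle)_{\gamma'}$ extends to a bounded $T$-equivariant map into $\HD$ intertwining $T$ with $\lambda$, because $\langle T(\gamma)\psi_n, T(\gamma')\psi_n\rangle = \langle \psi_n, T(\gamma^{-1}\gamma')\psi_n\rangle$ depends only on $\gamma^{-1}\gamma'$ and is square-summable in $\gamma'$ by hypothesis $ii)$; equivariance is the identity $\lambda(\gamma_0) V_n = V_n T(\gamma_0)$. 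This $V_n$ need not be isometric, but it is injective on a dense subspace with dense-ish range, and composing with the positive square root of its "Gramian" $V_n^* V_n$ (a positive operator in $\B(\psis{\psi_n})$ commuting with $T$) produces an isometry onto a closed $T$-invariant, hence $\lambda$-invariant, subspace of $\HD$; the $\lambda$-invariant closed subspaces of $\HD$ are exactly the subrepresentations of $\lambda$. Summing over $n$ gives $iii)$.

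For $iii) \Rightarrow i)$, suppose $T$ is unitarily equivalent, via $U: \Hil \to K \subseteq \bigoplus_{j} \HD$ (a $\lambda$-invariant closed subspace of a countable direct sum), to the corresponding subrepresentation of $\bigoplus_j \lambda$. On each summand $\HD$ the left regular representation is dual integrable by Lemma \ref{lem:dualint}, with bracket $[\psi_1,\psi_2] = \Psi_1\Psi_2^*$ where $\wh{\Psi_i} = \psi_i$. For the direct sum, set $[\varphi,\psi]_{\bigoplus} := \sum_j [\varphi_j,\psi_j]$ where $\varphi = (\varphi_j)_j$, $\psi = (\psi_j)_j$; this converges in $L^1(\vn(\Gamma))$ since $\sum_j \|[\varphi_j,\psi_j]\|_1 \le \sum_j \|\varphi_j\|\,\|\psi_j\| \le \|\varphi\|\,\|\psi\|$ by Proposition \ref{prop:properties} III) and Cauchy–Schwarz, and it satisfies the defining relation \eqref{eq:bracketproperty} summand by summand. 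Finally, pull the bracket back through $U$: define $[\varphi,\psi]_T := [U\varphi, U\psi]_{\bigoplus}$; since $U$ intertwines $T$ with $\bigoplus_j\lambda$ and is unitary, \eqref{eq:bracketproperty} for $T$ follows from that for $\bigoplus_j\lambda$. Restriction to the subrepresentation $K$ causes no trouble because the bracket formula involves only inner products, which are preserved. Hence $T$ is dual integrable.

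The main obstacle is the $ii) \Rightarrow iii)$ step, specifically the passage from the a priori only bounded equivariant map $V_n$ to an honest unitary onto a $\lambda$-invariant subspace, and the care needed to keep the orthogonalized cyclic vectors inside the square-integrability domain $\mathcal{D}$; handling that the orthogonal projections onto the previously chosen invariant subspaces send $\mathcal{D}$-vectors to vectors whose matrix coefficients against $T(\gamma)$-orbits remain in $\HD$ requires knowing these projections lie in the commutant of $T$ and arguing that the square integrability condition survives under them, which is the one genuinely technical point. Everything else is formal bookkeeping with noncommutative $L^p$ spaces and the results already established in Sections \ref{sec:duality} and \ref{sec:bracket}.
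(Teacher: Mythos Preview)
Your arguments for $ii)\Rightarrow iii)$ and $iii)\Rightarrow i)$ are essentially the paper's: the analysis operator $\mathcal{A}_\psi\varphi(\gamma)=\langle\varphi,T(\gamma)\psi\rangle$ followed by its polar decomposition to extract an intertwining partial isometry, and the pull-back of the summed bracket $\sum_j[\varphi_j,\psi_j]$ through the unitary equivalence. Your worry about keeping the orthogonalized cyclic vectors in $\mathcal D$ is legitimate and your fix (the projections lie in the commutant of $T$, so $\langle\varphi,T(\gamma)P\psi\rangle=\langle P\varphi,T(\gamma)\psi\rangle$ stays in $\HD$) is exactly right; the paper simply asserts one can choose $\{\psi_i\}\subset\mathcal D$.

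The genuine gap is in $i)\Rightarrow ii)$. Taking $\mathcal D=\Hil$ is \emph{false}: already for $T=\lambda$ on $\HD$ one has $\langle\varphi,\lambda(\gamma)\psi\rangle=\wh{\Phi\Psi^*}(\gamma)$ with $\Phi,\Psi\in L^2(\vn(\Gamma))$, and $\Phi\Psi^*$ is only guaranteed to lie in $L^1(\vn(\Gamma))$, whose Fourier coefficients are merely in $\ell_\infty(\Gamma)$, not $\HD$. (Concretely, for $\Gamma=\Z$ this is asking that $\Phi\ol\Psi\in L^2(\T)$ for all $\Phi\in L^2(\T)$, which forces $\Psi\in L^\infty$.) Your attempted repairs do not work either: $\{\lambda(\gamma)\}$ is orthonormal in $L^2(\vn(\Gamma))$ but \emph{not} in the weighted space $L^2(\vn(\Gamma),[\psi,\psi])$, so no Bessel inequality is available there; and the bound $\|S_\psi[\varphi][\psi,\psi]\|_2^2\le\|[\psi,\psi]\|_\infty\|\varphi\|_\Hil^2$ you write down is precisely the obstruction, since $[\psi,\psi]$ need not be bounded.

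The paper's route for $i)\Rightarrow ii)$ is quite different from a direct coefficient estimate: it uses the isometry $S_{\psi_i}:\psis{\psi_i}\to L^2(\vn(\Gamma),[\psi_i,\psi_i])$ of Proposition~\ref{prop:isometry} together with the second isometry $B_{\psi_i}:F\mapsto F[\psi_i,\psi_i]^{1/2}$ onto $L^2(\vn(\Gamma))s_{[\psi_i,\psi_i]}\subset L^2(\vn(\Gamma))\cong\HD$ to exhibit each cyclic piece as unitarily equivalent to a subrepresentation of $\lambda$ (so in effect it proves $i)\Rightarrow iii)$ first), and then invokes that subrepresentations of $\lambda$ are square integrable. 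If you want a direct argument, the natural candidate for $\mathcal D$ is $\{\psi:[\psi,\psi]\in\vn(\Gamma)\}$, but showing this is dense already requires essentially the same machinery.
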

\begin{proof}
In order to start the proof we recall a basic construction for a unitary representation $T$ on a separable Hilbert space $\Hil$. By unitarity and the homomorphism property, we can construct a family $\Psi = \{\psi_i\}_{i \in \ind}$ for a countable set of indices $\ind$ such that
\begin{equation}\label{eq:GramSchmidt}
% \Hil = \bigoplus_{i \in \ind} \psis{\psi_i}{\Gamma,T}{\Hil}
\Hil = \bigoplus_{i \in \ind} \psis{\psi_i}
\end{equation}
% in the following way: choose $0 \neq \psi_1 \in \Hil$, set $V_1 := (\psis{\psi_1}{\Gamma,T}{\Hil})^\bot$ and choose $\psi_2 \in V_1$. Then $\psis{\psi_2}{\Gamma,T}{\Hil} \subset V_1$ because
in the following way: choose $0 \neq \psi_1 \in \Hil$, set $V_1 := (\psis{\psi_1})^\bot$ and choose $\psi_2 \in V_1$. Then $\psis{\psi_2} \subset V_1$ because
$$
\psi_2 \in V_1 \ \Rightarrow \ \langle T(\gamma)\psi_2 , T(\gamma') \psi_1 \rangle_{\Hil} = \langle \psi_2 , T(\gamma^{-1}\gamma') \psi_1 \rangle_{\Hil} = 0 \quad \forall \gamma,\gamma' \in \Gamma \, ,
$$
so we can iterate this procedure and construct successive orthogonal subspaces.

In terms of such decomposition, for each $\varphi \in \Hil$ we can write
$$
% \varphi = \sum_{i \in \ind} \varphi^i \ , \quad \varphi^i \in \psis{\psi_i}{\Gamma,T}{\Hil} \quad \forall \ i \in \ind
\varphi = \sum_{i \in \ind} \varphi^i \ , \quad \varphi^i \in \psis{\psi_i} \quad \forall \ i \in \ind
$$
noting that $\|\varphi\|_\Hil^2 = \sum_{i \in \ind} \|\varphi^i\|_{\Hil}^2$.

$i. \Rightarrow ii.$ If $T$ is dual integrable, for the familily $\Psi = \{\psi_i\}_{i \in \ind}$ constructed above, we can define a map $S_\Psi$ on $\Hil$ as
$$
S_{\Psi}[\varphi] = \Big\{S_{\psi_i}[\varphi^i]\Big\}_{i \in \ind}
$$
% where $S_{\psi_i}[\varphi^i]$ is the isometric isomorphism from $\psis{\psi_i}{\Gamma,T}{\Hil}$ to $L^2(\vn(\Gamma),[\psi_i,\psi_i])$ as in Theorem \ref{theo:isometry}.
where $S_{\psi_i}$ is the isometric isomorphism from $\psis{\psi_i}$ onto $L^2(\vn(\Gamma),[\psi_i,\psi_i])$ as in Proposition \ref{prop:isometry}. This implies that $S_{\Psi}[\varphi]$ is also an isometric isomorphism from $\Hil$ onto the Hilbert space $\mathcal{K}^T_\Psi$ given by\footnote{Observe that $\mathcal{K}^T_\Psi$ may depend on $T$ since its definition is based on the bracket map associated with $T$.}
$$
\mathcal{K}^T_\Psi = \bigoplus_{i \in \ind} L^2(\vn(\Gamma),[\psi_i,\psi_i])
$$
endowed with the natural norm. Indeed
$$
\|S_{\Psi}[\varphi]\|_{\mathcal{K}_\Psi}^2 = \sum_{i \in \ind} \|S_{\psi_i}[\varphi^i]\|_{L^2(\vn(\Gamma),[\psi_i,\psi_i])}^2 = \sum_{i \in \ind} \|\varphi^i\|_{\Hil}^2 = \|\varphi\|_\Hil^2 .
$$
Now let $\lr^i$ be the unitary representation on $L^2(\vn(\Gamma),[\psi_i,\psi_i])$ given by
$$
U(L^2(\vn(\Gamma),[\psi_i,\psi_i])) \ni \lr^i(\gamma) : F \mapsto \lr(\gamma)F \quad \forall \ \gamma \in \Gamma
$$
and denote with $\lr^{\Psi}$ the direct sum of these representations
$$
\lr^{\Psi} := \bigoplus_{i \in \ind} \lr^i : \Gamma \to U(\mathcal{K}^T_\Psi) .
$$
So, since
$$
S_{\Psi}[T(\gamma)\varphi] = \Big\{\lr^i(\gamma)S_{\psi_i}[\varphi^i]\Big\}_{i \in \ind} = \lr^{\Psi}(\gamma) S_{\Psi}[\varphi]
$$
the representation $T$ is unitarily equivalent to the representation $\lr^\Psi$.

Observe that each space $L^2(\vn(\Gamma),[\psi_i,\psi_i])$ is isometrically isomorphic to the Hilbert space of $L^2(\vn(\Gamma))$ operators supported by the bracket of $[\psi_i,\psi_i]$ and endowed with the $L^2(\vn(\Gamma))$ norm
$$
L^2(\vn(\Gamma))s_{[\psi_i,\psi_i]} = \{F \in L^2(\vn(\Gamma)) \, |\, \exists G \in L^2(\vn(\Gamma)) : F = Gs_{[\psi_i,\psi_i]}\}
$$
via the map
$$
\begin{array}{rccl}
B_{\psi_i} : & L^2(\vn(\Gamma),[\psi_i,\psi_i]) & \to & L^2(\vn(\Gamma))s_{[\psi_i,\psi_i]}\\
& F & \mapsto & F [\psi_i,\psi_i]^\frac12 .
\end{array}
$$
In the notation of Lemma \ref{lem:weighted} we have that $L^2(\vn(\Gamma))s_{[\psi_i,\psi_i]} = \ol{\,\mathfrak{h}\ }^{\|\cdot\|_{2}}$, so it is a closed subspace of $L^2(\vn(\Gamma))$. By Plancherel Lemma \ref{lem:Plancherel}, the Fourier transform maps $L^2(\vn(\Gamma))$ isometrically onto $\ell_2(\Gamma)$, so it also maps $L^2(\vn(\Gamma))s_{[\psi_i,\psi_i]}$ isometrically onto a closed subspace of $\ell_2(\Gamma)$. We have then that each $\lambda^i$ is unitarily equivalent, via the Fourier transform, to a subrepresentation of the left regular representation and, as such, it is square integrable (for a proof\footnote{In this type of statements one usually encounters the hypothesis of irreducibility of the representation. Note however that this is not required to prove the considered implication.} see e.g. \cite[Corollary 12.2.4]{DeitmarEchterhoff09}). The dual integrable representation $T$ is then unitarily equivalent to a direct sum of square integrable representations, and hence it is square integrable.

$ii. \Rightarrow iii.$ If $T$ is square integrable, then the operator $\mathcal{A}_\psi$ defined by
$$
\mathcal{A}_\psi \varphi(\gamma) = \langle \varphi, T(\gamma)\psi \rangle_{\Hil}
$$
and sometimes called the \emph{analysis operator}, is a well defined operator from $\Hil$ into $\HD$ for all $\psi \in \mathcal{D}$. This is a condition generally referred to as the fact that $\{T(\gamma)\psi\}_{\gamma \in \Gamma}$ is a \emph{Bessel sequence} (see e.g. \cite{HW96}), which implies that the operator $\Frame_\psi := \mathcal{A}_\psi^*\mathcal{A}_\psi$ is well defined and bounded from $\Hil$ into $\psis{\psi}$. %$\psis{\psi}{\Gamma,T}{\Hil}$.  
This is sometimes called the \emph{frame operator}, and its action reads explicitly
$$
\Frame_\psi \varphi = \sum_{\gamma \in \Gamma} \mathcal{A}_\psi \varphi (\gamma) \, T(\gamma)\psi .
$$
This allows to write the polar decomposition of $\mathcal{A}_\psi$ as
$$
\mathcal{A}_\psi = u_{\mathcal{A}_\psi} |\mathcal{A}_\psi| = u_{\mathcal{A}_\psi} \Frame_\psi^\frac12
$$
% where $u_{\mathcal{A}_\psi}$ is a partial isometry from $\Ker(\mathcal{A}_\psi)^\bot = \psis{\psi}{\Gamma,T}{\Hil}$ to $\HD$ satisfying $u_{\mathcal{A}_\psi}^* u_{\mathcal{A}_\psi}^{\phantom{*}} = r_{\mathcal{A}_\psi}^{\phantom{*}} = \Proj_{\psis{\psi}{\Gamma,T}{\Hil}}$.
where $u_{\mathcal{A}_\psi}$ is a partial isometry from $\Ker(\mathcal{A}_\psi)^\bot = \psis{\psi}$ to $\HD$ satisfying $u_{\mathcal{A}_\psi}^* u_{\mathcal{A}_\psi}^{\phantom{*}} = r_{\mathcal{A}_\psi}^{\phantom{*}} = \Proj_{\psis{\psi}}$.

\newpage
Since $T$ is a unitary representation, $\mathcal{A}_\psi$ satisfies $\mathcal{A}_\psi T(\gamma) = \lr(\gamma)\mathcal{A}_\psi$, which implies that $\Frame_\psi$ commutes with $T(\gamma)$ for all $\gamma \in \Gamma$, because
$$
\Frame_\psi T(\gamma)\varphi = \sum_{\gamma' \in \Gamma} (\mathcal{A}_\psi T(\gamma)\varphi) (\gamma') \, T(\gamma')\psi = \sum_{\gamma' \in \Gamma} \mathcal{A}_\psi \varphi (\gamma^{-1}\gamma') \, T(\gamma')\psi = T(\gamma) \Frame_\psi \varphi .
$$
From such commutation relations and the polar decomposition of $\mathcal{A}_\psi$ we get
$$
\lr(\gamma) u_{\mathcal{A}_\psi} \Frame_\psi^\frac12 = u_{\mathcal{A}_\psi} \Frame_\psi^\frac12 T(\gamma) = u_{\mathcal{A}_\psi}  T(\gamma) \Frame_\psi^\frac12 \, ,
$$
so $(\lr(\gamma) u_{\mathcal{A}_\psi} - u_{\mathcal{A}_\psi}  T(\gamma))\Frame_\psi^\frac12 = 0$, which implies $(\lr(\gamma) u_{\mathcal{A}_\psi} - u_{\mathcal{A}_\psi}  T(\gamma))s_{|\mathcal{A}_\psi|} = 0$, where $s_{|\mathcal{A}_\psi|}$ is the support of $|\mathcal{A}_\psi| = \Frame_\psi^\frac12$. Since $s_{|\mathcal{A}_\psi|}$ coincides with $u_{\mathcal{A}_\psi}^* u_{\mathcal{A}_\psi}^{\phantom{*}} = r_{\mathcal{A}_\psi}^{\phantom{*}}$, the right support of $\mathcal{A}_\psi$, and since $T$ commutes with $s_{|\mathcal{A}_\psi|}$ because it commutes with $\Frame_\psi$, we then have that
\begin{equation}\label{eq:commutation}
\lr(\gamma) u_{\mathcal{A}_\psi} = u_{\mathcal{A}_\psi}  T(\gamma) \quad \forall \ \gamma \in \Gamma .
\end{equation}
Let now $\Psi = \{\psi_i\}_{i \in \ind} \subset \mathcal{D}$ be a countable family such that the decomposition (\ref{eq:GramSchmidt}) holds, and define the map
$U_\Psi : \Hil \to \ell_2(\ind,\HD)$ by
$$
U_\Psi\varphi = \{u_{\mathcal{A}_{\psi_i}}\varphi^i\}_{i \in \ind} .
$$
% Since each $u_{\mathcal{A}_{\psi_i}}$ is a partial isometry from $\psis{\psi_i}{\Gamma,T}{\Hil}$ to $\HD$, then $U_\Psi$ is an isometry, and by (\ref{eq:commutation}) it intertwines $T$ with a direct sum of countably many copies of the regular representation.
Since each $u_{\mathcal{A}_{\psi_i}}$ is a partial isometry from $\psis{\psi_i}$ into $\HD$, then $U_\Psi$ is an isometry, and by (\ref{eq:commutation}), $T$ is unitarily equivalent to a subrepresentation of a direct sum of countably many copies of the left regular representation.

% $iii. \Rightarrow i. \, \& \, ii.$ The left regular representation is dual integrable by Lemma \ref{lem:dualint}, and it is also square integrable, since
% \begin{eqnarray*}
% \langle \varphi, \lr(\gamma)\psi\rangle_{\HD} & = & \sum_{\gamma' \in \Gamma} \varphi(\gamma') \ol{\psi(\gamma^{-1}\gamma')} = \sum_{\gamma' \in \Gamma} \ol{\psi(\gamma')} \varphi(\gamma\gamma')\\
% & = & \bigg(\sum_{\gamma' \in \Gamma} \ol{\psi(\gamma')} \rho(\gamma')\bigg) \varphi(\gamma)
% \end{eqnarray*}
% where $\rho$ is the \emph{right} regular representation. By the same argument in (\ref{eq:Pontryagin}) one has that $\sum_{\gamma' \in \Gamma} \ol{\psi(\gamma')} \rho(\gamma')$ is bounded in $\HD$ for all $\psi \in \ell_1(\Gamma)$, which is dense in $\HD$. So the same holds for a direct sum of copies of the left regular representation.

$iii. \Rightarrow i.$ First of all, since by Lemma \ref{lem:dualint} the left regular representation is dual integrable, then a direct sum $\lambda^\ind = \displaystyle \bigoplus_{i \in \ind} \lambda$ of countably many copies of $\lambda$, where $\ind$ is a countable set, is also dual integrable. Indeed, for $\varphi = \{\varphi^i\}_{i \in \ind}$ and $\psi = \{\psi^i\}_{i \in \ind}$ in $\ell_2(\ind,\HD)$ we have
\begin{align*}
\langle \varphi, \lambda^\ind(\gamma)\psi \rangle_{\ell_2(\ind,\HD)} & = \sum_{i \in \ind} \langle \varphi^i, \lambda(\gamma) \psi^i\rangle_{\HD} = \sum_{i \in \ind} \tau\left(\lambda^*(\gamma)\big[\varphi^i, \psi^i\big]\right)\\
& = \tau\left(\lambda^*(\gamma)\sum_{i \in \ind} \big[\varphi^i, \psi^i\big]\right)
\end{align*}
where the last identity is due to Fubini's theorem. Since, again by Lemma \ref{lem:dualint}, $[\varphi^i, \psi^i\big] = \Phi_i^{\phantom{*}} \Psi_i^*$, with $\wh{\Phi_i} = \varphi^i$ and $\wh{\Psi_i} = \psi^i$ for all $i \in \ind$, then
$$
\sum_{i \in \ind} \big[\varphi^i, \psi^i\big] = \sum_{i \in \ind} \Phi_i^{\phantom{*}} \Psi_i^* .
$$
When $\varphi = \psi$, we can see that this belongs to $L^1(\vn(\Gamma))$ by Plancherel theorem because $\tau\left(\left|\sum_{i \in \ind} \Phi_i^{\phantom{*}} \Phi_i^*\right|\right) = \|\varphi\|^2_{\ell_2(\ind,\HD)}$, while the general case can be obtained by polarization, so this sum defines a bracket map.

Secondly, observe that if a unitary representation is dual integrable then all its subrepresentations are dual integrable, with the same bracket map restricted to the closed Hilbert subspace. Finally, if a unitary representation $T_1$ over the Hilbert space $\Hil_1$ is unitarily equivalent to a dual integrable representation $T_2$ over the Hilbert space $\Hil_2$ via an isometric isomorphism $U : \Hil_1 \to \Hil_2$, then $T_1$ is dual integrable with bracket $[\varphi, \psi]_{T_1} = [U\varphi, U\psi]_{T_2}$ where $\varphi, \psi \in \Hil_1$.
% Let $\ind$ be a countable set and let $U : \Hil \to \ell_2(\ind,\HD)$ be an isometric isomorphism that intertwines $T$ with a direct sum over $\ind$ of copies of the left regular representation $\lambda$ over $\HD$. Since by Lemma \ref{lem:dualint} the left regular representation is dual integrable, for $\varphi, \psi \in \Hil$ we have
% \begin{align*}
% \langle \varphi, T(\gamma)\psi \rangle_{\Hil} & = \langle U\varphi, U T(\gamma)\psi\rangle_{\ell_2(\ind,\HD)} = \sum_{i \in \ind} \langle (U\varphi)^i, \lambda(\gamma) (U\psi)^i\rangle_{\HD}\\
% & = \sum_{i \in \ind} \tau\left(\lambda^*(\gamma)\big[(U\varphi)^i, (U\psi)^i\big]\right) = \tau\left(\lambda^*(\gamma)\sum_{i \in \ind} \big[(U\varphi)^i, (U\psi)^i\big]\right)
% \end{align*}
% where $U\varphi = \{(U\varphi)^i\}_{i \in \ind}$ and $U\psi = \{(U\psi)^i\}_{i \in \ind}$, and the last identity is due to Fubini's theorem. Now let $\Phi, \Psi \in \ell_2(\ind,L^2(\vn(\Gamma)))$ be such that $\wh{\Phi_i} = (U\varphi)^i$ and $\wh{\Psi_i} = (U\psi)^i$ for all $i \in \ind$. Then by Lemma \ref{lem:dualint} we have $\big[(U\varphi)^i, (U\psi)^i\big] = \Phi_i \Psi_i^*$ for all $i \in \ind$, so we have obtained that the representation $T$ is dual integrable with bracket
% \begin{displaymath}
% [\varphi, \psi] = \sum_{i \in \ind} \Phi_i \Psi_i^* . \qedhere
% \end{displaymath}
\end{proof}

\begin{rem}
If $T$ is a square/dual integrable representation of a discrete countable group $\Gamma$ on the separable Hilbert space $\Hil$, then for all $\varphi \in \Hil$ and all $\psi \in \mathcal{D}$ as in Theorem \ref{theo:squareintegrable} the operator
$$
B_{\varphi,\psi} := \sum_{\gamma \in \Gamma} \langle \varphi , T(\gamma) \psi \rangle_{\Hil} \lr(\gamma)
$$
belongs to $L^2(\vn(\Gamma))$, and by Plancherel Lemma \ref{lem:Plancherel}
$$
\langle \varphi , T(\gamma) \psi \rangle_{\Hil} = \tau(\lr(\gamma)^*B_{\varphi,\psi})
$$
so that it coincides with the bracket map $[\varphi,\psi]$ for all $\varphi \in \Hil$ and all $\psi \in \mathcal{D}$.
% A consequence of Theorem \ref{theo:squareintegrable} is that $B_{\varphi,\psi}$ can be extended to an $L^1(\vn(\Gamma))$ operator for all $\varphi \in \Hil$, hence providing a direct way to compute bracket maps.
\end{rem}

\subsection{A noncommutative Zak transform} \label{sec:NCZaktransform}

Following the definition of Zak transform given for LCA groups in \cite{HSWW10}, we can obtain dual integrability for unitary representations obtained from certain actions of discrete countable groups on measure spaces in terms of a properly defined noncommutative Zak transform.

Let $(\meas,\mu)$ be a $\sigma$-finite measure space. A measurable action of a group $\Gamma$ on $\meas$ is a map $\act: \Gamma  \times \meas \to \meas$ satisfying
\begin{itemize}
\item[i.] for each $\gamma \in \Gamma$ the map $\act_\gamma: \meas \to \meas$  given by $\act_\gamma (x) :=\act(\gamma,x)$ is $\mu$-measurable;
\item[ii.] $\act_\gamma (\act_{\gamma'} (x)) = \act_{\gamma \gamma'}(x)$ for all $\gamma, \gamma' \in \Gamma$ and all $x \in \meas$;
\item[iii.] $\act_\id (x) = x$ for all $x \in \meas$.
\end{itemize}
For each $\gamma \in \Gamma$, let us denote with $\mu_\gamma$ the measure defined by
$$
\mu_\gamma(E) := \mu(\act_\gamma (E))
$$
for all Borel sets $E \subset \meas$. We say that the action $\act$ is \emph{quasi-$\Gamma$-invariant} if $\mu_\gamma \ll \mu$ with positive Radon-Nikodym derivative for all $\gamma \in \Gamma$, i.e. if there exists a measurable function $J_\act: \Gamma \times \meas \to \R^+$, called Jacobian of $\act$, such that
\begin{equation}\label{Jacobian}
d\mu(\act_\gamma (x)) = J_\sigma (\gamma, x)\, d\mu (x)\,.
\end{equation}
Using the properties of $\act$ and the definition of the Jacobian given in (\ref{Jacobian}) we deduce:
\begin{equation}\label{Chain-rule}
J_\act (\gamma_1\gamma_2 ,x) = J_\act(\gamma_1, \act_{\gamma_2} (x))\, J_\act(\gamma_2, x) \quad \forall \ \gamma_1,\gamma_2 \in \Gamma \, , \
\forall \ x \in \meas \,.
\end{equation}
For a quasi-$\Gamma$-invariant action $\sigma$ we can associate a unitary representation $T_\act$ of $\Gamma$ on $L^2(\meas,\mu)$ given by
\begin{equation}\label{eq:repZak}
T_\act(\gamma)f(x) := J_\act (\gamma^{-1}, x)^\frac12 f(\act_{\gamma^{-1}}(x)) .
\end{equation}
The fact that $T$ is unitary follows from (\ref{Jacobian}). To prove that it is a representation, choose $\gamma_1, \gamma_2 \in \Gamma,
f \in L^2(\meas,\mu)$ and $x \in \meas$, use definition (\ref{eq:repZak}), equality (\ref{Chain-rule}), and the properties of the
action $\sigma$ to obtain:
\begin{align*}
 (T_\act (\gamma_1) T_\act (\gamma_2) f) (x) & = J_\act (\gamma_1^{-1}, x)^{\frac12}(T_\act (\gamma_2) f)(\act_{\gamma_1^{-1}} (x)) \\
& = J_\act (\gamma_1^{-1}, x)^{\frac12} J_\act (\gamma_2^{-1}, \act_{\gamma_1^{-1}}(x))^{\frac12} f(\act_{\gamma_2^{-1}}(\act_{\gamma_1^{-1}}(x)) \\
& = J_\act (\gamma_2^{-1} \gamma_1^{-1}, x)^{\frac12} f(\act_{\gamma_2^{-1} \gamma_1^{-1}}(x)) \\
& = J_\act ((\gamma_1 \gamma_2)^{-1}, x)^{\frac12} f(\act_{(\gamma_1 \gamma_2)^{-1}}(x)) = (T_\act(\gamma_1\gamma_2)f) (x)\,.
\end{align*}

The \emph{Zak transform} of $\psi \in L^2(\meas,\mu)$ associated to the action $\act$ is the field over $\meas$ whose values are operators given by
\begin{equation}\label{eq:generalizedZak}
Z_\act[\psi](x) := \sum_{\gamma \in \Gamma} \Big(\big(T_\act(\gamma)\psi\big)(x)\Big) \lr(\gamma)^*, \quad x \in \meas .
\end{equation}

We will say that the action $\act$ has the \emph{tiling property} if there exists a $\mu$-measurable subset $C \subset \meas$ such that the family
$\{\act_\gamma (C)\}_{\gamma \in \Gamma}$ is a disjoint covering of $\mu$-almost all $\meas$, i.e.
$\mu\big(\act_{\gamma_1}(C) \cap \act_{\gamma_2} (C)\big) = 0$ for $\gamma_1 \neq \gamma_2$ and
$$
\mu\bigg(\meas \setminus \bigcup_{\gamma \in \Gamma}\,\act_\gamma (C)\bigg) = 0 .
$$

The Zak transform given by (\ref{eq:generalizedZak}) is defined on $\meas$. We now show that if the action $\act$ satisfies
the tiling property it is enough to know its values in the tiling set $C$ to know its values in $\meas.$
This follows from the the following property:

\begin{lem}\label{lem:Zak1}
Given a quasi-$\Gamma$-invariant action $\sigma$ we have
$$ Z_\act [\psi] (\act_\gamma (x)) = J_\act (\gamma, x)^{-\frac12} Z_\act [\psi](x) \lr(\gamma)^*$$
for $\gamma\in \Gamma, x \in \meas$, and $\psi \in L^2(\meas,\mu)$.
\end{lem}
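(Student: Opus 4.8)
The plan is to unwind the definition of $Z_\act$ at the point $\act_\gamma(x)$, transfer the group element $\gamma$ from the argument of the field to the argument of $\psi$ and to the Jacobian by means of the cocycle identity (\ref{Chain-rule}), and then re-index the resulting series so that it reassembles into $Z_\act[\psi](x)$ acted upon on the right by $\lr(\gamma)^*$. No new ideas are needed beyond the homomorphism property of $\act$ and of $\lr$ together with (\ref{Chain-rule}).

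Concretely, starting from (\ref{eq:generalizedZak}) and (\ref{eq:repZak}) one writes
$$
Z_\act[\psi](\act_\gamma(x)) = \sum_{\beta \in \Gamma} J_\act\big(\beta^{-1},\act_\gamma(x)\big)^{\frac12}\,\psi\big(\act_{\beta^{-1}}(\act_\gamma(x))\big)\,\lr(\beta)^* .
$$
By property ii.\ of a measurable action, $\act_{\beta^{-1}}(\act_\gamma(x)) = \act_{\beta^{-1}\gamma}(x)$, while (\ref{Chain-rule}) with $\gamma_1 = \beta^{-1}$ and $\gamma_2 = \gamma$ gives $J_\act(\beta^{-1},\act_\gamma(x)) = J_\act(\beta^{-1}\gamma,x)\,J_\act(\gamma,x)^{-1}$. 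Substituting these and then re-indexing the sum by $\beta = \gamma\beta'$ — so that $\beta^{-1}\gamma = \beta'^{-1}$ and $\lr(\beta)^* = \lr(\beta')^*\lr(\gamma)^*$ — one obtains
\begin{align*}
Z_\act[\psi](\act_\gamma(x)) &= J_\act(\gamma,x)^{-\frac12}\sum_{\beta' \in \Gamma} J_\act\big(\beta'^{-1},x\big)^{\frac12}\,\psi\big(\act_{\beta'^{-1}}(x)\big)\,\lr(\beta')^*\,\lr(\gamma)^* \\
&= J_\act(\gamma,x)^{-\frac12}\,Z_\act[\psi](x)\,\lr(\gamma)^* ,
\end{align*}
which is exactly the claimed identity.

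The arithmetic is routine; the only point deserving care is the manipulation of the operator-valued series, i.e.\ replacing the summation index and pulling the scalar $J_\act(\gamma,x)^{-\frac12}$ and the bounded operator $\lr(\gamma)^*$ outside the sum. I would handle this by first performing the three substitutions on the partial sums over finite subsets $\Omega \subset \Gamma$ — where everything is a finite sum of trigonometric polynomials and the identities above are immediate — and then passing to the limit. This is legitimate because for $\mu$-a.e.\ $x \in \meas$ the sequence $\{(T_\act(\beta)\psi)(x)\}_{\beta \in \Gamma}$ belongs to $\HD$, so by the Plancherel Lemma \ref{lem:Plancherel} the series defining $Z_\act[\psi](x)$ converges unconditionally in $L^2(\vn(\Gamma))$, and right multiplication by $\lr(\gamma)^* \in \vn(\Gamma)$ is continuous on $L^2(\vn(\Gamma))$. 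Hence there is no genuine obstacle beyond this bookkeeping.
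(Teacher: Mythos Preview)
Your proof is correct and follows essentially the same route as the paper: expand the definition, apply the cocycle identity (\ref{Chain-rule}), and re-index. The only cosmetic difference is that you apply (\ref{Chain-rule}) in the form $J_\act(\beta^{-1},\act_\gamma(x)) = J_\act(\beta^{-1}\gamma,x)\,J_\act(\gamma,x)^{-1}$ directly, whereas the paper first changes variables, then applies the chain rule to obtain the factor $J_\act(\gamma^{-1},\act_\gamma(x))^{1/2}$, and finally checks separately that $J_\act(\gamma^{-1},\act_\gamma(x)) = J_\act(\gamma,x)^{-1}$; your version is a touch more economical. Your added paragraph on unconditional $L^2(\vn(\Gamma))$-convergence and continuity of right multiplication is a welcome clarification that the paper leaves implicit.
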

\begin{proof}
Using the definitions of $Z_\act$ and $T_\act$, and a change of variables we obtain:
\begin{align*}
Z_\act [\psi] (\act_\gamma (x)) & = \sum_{\gamma_1 \in \Gamma} J_\act (\gamma_1^{-1}, \act_\gamma(x))^{\frac12} \,\psi(\act_{\gamma_1^{-1}\gamma}(x))\, \lr(\gamma_1)^* \\
& = \sum_{\gamma_2 \in \Gamma} J_\act ((\gamma \gamma_2)^{-1} , \act_\gamma (x))^{\frac12} \, \psi (\act_{\gamma_2^{-1}}(x)) \, \lr(\gamma \gamma_2)^* \,.
\end{align*}
Using the Chain rule (\ref{Chain-rule}) we deduce:
\begin{align*}
 Z_\act [\psi] (\act_\gamma (x)) & = \sum_{\gamma_2 \in \Gamma} J_\act (\gamma_2^{-1}, x)^{\frac12} J_\act (\gamma^{-1}, \act_{\gamma}(x))^{\frac12}
 \,\psi(\act_{\gamma_2^{-1}}(x))\, \lr(\gamma_2)^*\lr(\gamma)^* \\
& = J_\act (\gamma^{-1}, \act_{\gamma}(x))^{\frac12} \Big( \sum_{\gamma_2 \in \Gamma} J_\act (\gamma_2^{-1}, x)^{\frac12}
 \,\psi(\act_{\gamma_2^{-1}}(x))\, \lr(\gamma_2)^*\Big) \lr(\gamma)^* \\
& = J_\act (\gamma^{-1}, \act_{\gamma}(x))^{\frac12}\, Z_\act [\psi](x) \,\lr(\gamma)^*\,.
\end{align*}
It remains to show that $J_\act (\gamma^{-1}, \act_{\gamma}(x))= J_\act (\gamma, x)^{-1}.$ This follows from the definition of
the Jacobian (\ref{Jacobian}) since
$$
J_\act(\gamma^{-1}, \act_\gamma(x)) d\mu (\act_\gamma(x)) = d\mu(\act_{\gamma^{-1}}(\act_{\gamma}(x))) = d\mu (x)
= J_\act(\gamma, x)^{-1} d\mu (\act_\gamma(x))\,.
$$
\end{proof}

We are now ready to prove Theorem B stated in the Introduction, showing that when a quasi-$\Gamma$-invariant action has the tiling property, then the associated unitary representation (\ref{eq:repZak}) is dual integrable with a bracket map which can be expressed in terms of the Zak transform (\ref{eq:generalizedZak}).

% \begin{theo}\label{theo:Zak}
% Let $\act$ be a quasi-$\Gamma$-invariant action of the discrete countable group $\Gamma$ on the measure space $(\meas,\mu)$, and let $T_\act$ be the associated unitary representation (\ref{eq:repZak}) on $L^2(\meas,\mu)$. If $\act$ has the tiling property with tiling set $C$, then
% \begin{itemize}
% \item[i.] the Zak transform (\ref{eq:generalizedZak}) defines an isometry
% $$
% Z_\act : L^2(\meas,\mu) \to L^2((C,\mu),L^2(\vn(\Gamma)))
% $$
% and satisfies the quasi-periodicity condition $Z_\act[T_\act(\gamma)\psi] = \lr(\gamma) Z_\act[\psi]\,.$
% \item[ii.] the representation $T$ is dual integrable with bracket
% \begin{equation}\label{eq:ZakBracket}
% [\psi_1, \psi_2] = \int_C Z_\act[\psi_1](x) Z_\act[\psi_2](x)^* d\mu(x) .
% \end{equation}
% \end{itemize}
% \end{theo}
\begin{proof}[Proof of Theorem B]
The quasi-periodicity is obtained from (\ref{eq:generalizedZak}) and the fact that $T_\act$ is a representation of $\Gamma$ since
\begin{align*}
Z_\act[T_\act(\gamma)\psi](x) & = \sum_{\gamma' \in \Gamma} \Big(\big(T_\act(\gamma'\gamma)\psi\big)(x)\Big) \lr(\gamma'^{-1})\\
& = \sum_{\gamma'' \in \Gamma} \Big(\big(T_\act(\gamma'')\psi\big)(x)\Big) \lr(\gamma)\lr(\gamma''^{-1}) = \lr(\gamma) \,Z_\act [\psi](x)\,.
\end{align*}
The isometry property of $Z_\act$ and the dual integrability of $T_\act$ will follow from the computations below. For $\gamma \in \Gamma$ and $\psi_1, \psi_2 \in L^2(\meas,\mu)$, using Fubini's theorem, the traciality of $\tau$ and the quasi-periodicity of $Z_\act$, we get
\begin{align*}
\tau&\bigg(\lr(\gamma)^* \!\int_C\!\! Z_\act[\psi_1](x) Z_\act[\psi_2](x)^* d\mu(x)\bigg)
\!=\! \int_C \!\!\tau\Big(\lr(\gamma)^* Z_\act[\psi_1](x)Z_\act[\psi_2](x)^* \Big) d\mu(x)\\
& = \int_C \tau\Big(\,\big(\lr(\gamma)Z_\act[\psi_2](x)\big)^* Z_\act[\psi_1](x) \Big) d\mu(x)\\
& = \int_C \tau\Big(Z_\act[T_\act(\gamma)\psi_2](x)^* Z_\act[\psi_1](x) \Big) d\mu(x)\\
& = \int_C \langle Z_\act[\psi_1](x) \delta_\id, Z_\act[T_\act(\gamma)\psi_2](x) \delta_\id\rangle_{\HD}d\mu(x)\,,
\end{align*}
so by the definition of $Z_\act$, the orthonormality of $\{\delta_\gamma\}_{\gamma \in \Gamma}$ and the tiling property
\begin{align*}
\phantom{\tau} & = \int_C\! \langle \sum_{\gamma_1 \in \Gamma} \Big(\big(T_\act(\gamma_1)\psi_1\big)(x)\Big) \delta_{\gamma_1^{-1}}, \sum_{\gamma_2 \in \Gamma} \Big(\big(T_\act(\gamma_2)(T_\act(\gamma)\psi_2)\big)(x)\Big) \delta_{\gamma_2^{-1}}\rangle_{\HD}d\mu(x)\\
% & = \int_C \langle Z_T[\psi_1](x), \lr(\gamma)Z_T[\psi_2](x)\rangle_2 d\mu(x) = \int_C \langle Z_T[\psi_1](x), Z_T[T(\gamma)\psi_2](x)\rangle_2 d\mu(x)\\
% & = \int_C \sum_{\gamma' \in \Gamma} \sum_{\gamma'' \in \Gamma} \Big(\big(T(\gamma')\psi_1\big)(x)\Big) \ol{\Big(\big(T(\gamma'')(T(\gamma)\psi_2)\big)(x)\Big)}\, \langle \lr^*(\gamma'), \lr^*(\gamma'')\rangle_2 \,d\mu(x)\\
& = \int_C \sum_{\gamma_1 \in \Gamma} \Big(\big(T_\act(\gamma_1)\psi_1\big)(x)\Big) \ol{\Big(\big(T_\act(\gamma_1)(T_\act(\gamma)\psi_2)\big)(x)\Big)} d\mu(x)\\
% & = \sum_{\gamma' \in \Gamma} \int_{C} (T(\gamma')\psi_1)(x) \ol{\big(T(\gamma')(T(\gamma)\psi_2)\big)(x)} d\mu(x)\\
& = \sum_{\gamma_1\in \Gamma} \int_C J_\act (\gamma_1^{-1}, x) \,\psi_1 (\act_{\gamma_1^{-1}}(x)) \overline{(T_\act(\gamma)\psi_2) (\act_{\gamma_1^{-1}}(x))}\, d\mu(x)\,.
\end{align*}
Letting $y= \act_{\gamma_1^{-1}}(x)$ we deduce from (\ref{Jacobian}) the equalities
$$
d\mu(y)= d\mu (\act_{\gamma_1^{-1}}(x) ) = J_\act (\gamma_1^{-1}, x) d\mu(x)\,,
$$
so that
\begin{align*}
\tau\bigg(\lr(\gamma)^* & \int_C Z_\act[\psi_1](x) Z_\act[\psi_2](x)^* d\mu(x)\bigg) \\
& = \sum_{\gamma_1 \in \Gamma} \int_{\act_{\gamma_1^{-1}}(C)} \psi_1(y) \, \overline{T_\act(\gamma) \psi_2(y)} \, d\mu(y)
 =\langle \psi_1, T_\act(\gamma)\psi_2\rangle_{L^2(\meas,\mu)} .
\end{align*}
The isometry property can be obtained choosing $\psi_1 = \psi_2 = \psi \in L^2(\meas,\mu)$ and $\gamma = \id$, using the traciality of $\tau$, since
$$
\int_C \|Z_\act[\psi](x)\|_2^2 d\mu(x) = \int_C \tau\Big(Z_\act[\psi](x) Z_\act[\psi](x)^*\Big)d\mu(x) = \|\psi\|^2_{L^2(\meas,\mu)}\, ,
$$
while the fact that the bracket (\ref{eq:ZakBracket}) is in $L^1(\vn(\Gamma))$ is a consequence of H\"older inequality on $L^2(\vn{\Gamma})$ and on $L^2(C, d\mu(x))$:
\begin{align*}
\bigg\| \int_C Z_\act[\psi_1](x) & Z_\act[\psi_2](x)^* d\mu(x) \bigg\|_{L^1(\vn{\Gamma})} \leq \int_C \tau\left(|Z_\act[\psi_1](x) Z_\act[\psi_2](x)^*|\right)d\mu(x)\\
& \leq \int_C \|Z_\act[\psi_1](x)\|_2 \|Z_\act[\psi_2](x)\|_2 d\mu(x)\\
& \leq \left(\int_C \|Z_\act[\psi_1](x)\|_2^2 d\mu(x)\right)^\frac12 \left(\int_C \|Z_\act[\psi_2](x)\|_2^2 d\mu(x)\right)^\frac12\\
& = \|\psi_1\|_{L^2(\meas,\mu)} \|\psi_2\|_{L^2(\meas,\mu)} .\qedhere
\end{align*}
\end{proof}

Conditions on $(\meas,\mu)$, $\Gamma$ and $\sigma$ that are equivalent to the tiling property can be stated in terms of the existence of a Borel transversal (set of representatives) for the space of orbits of $\Gamma$ in $\meas$, and can be found e.g. in \cite{Baker65, Effros65}.

\section{Examples} \label{Sec:Examples}

In this section we will start by showing that for LCA groups our results imply the ones obtained in \cite{HSWW10}. Then we will consider
the case of the left action of a discrete subgroup, and that of semidirect products of groups with associated unitary representations, including the ones used in the theory of composite wavelets (see e.g. \cite{WeissWilson01, LaugersenWeaverWeissWilson02, LabateWeiss10}).

\subsection{LCA groups} \label{Subsec:LCAgroups}

In this paper we have introduced a bracket map whose values are operators belonging to $L^1(\vn (\Gamma))$, where $\Gamma$ is any
discrete group. On the other hand, when $\Gamma$ is a LCA group, a bracket map has been defined
in \cite{HSWW10} that takes values on $L^1(\widehat \Gamma)$, where $\widehat \Gamma$ denotes the Pontryagin dual of $\Gamma.$
For a  discrete LCA group we, therefore, have two definitions of a bracket map of different nature: the one defined in this
paper, which is operator valued, denoted by $[ \ , \ ]^{\textnormal{op}}$ in this subsection, and the one defined in \cite{HSWW10}, whose values are functions, and denoted by $[ \ , \ ]$ in this subsection.
% We now clarify the relation between the operator bracket $[ \ , \ ]^{op}$ and the function bracket $[ \ , \ ].$ 

The relation between the operator bracket $[ \ , \ ]^{\textnormal{op}}$ and the function bracket $[ \ , \ ]$, as well as the equivalence of the corresponding results when $\Gamma$ is a countable abelian group, are due to the following standard construction which corresponds to a special case of the Gelfand-Naimark-Segal (GNS) theorem (see e.g. \cite[\S 7]{Conway00}, \cite[Vol.1, Chapt. 4]{KadisonRingrose83}).
For $\gamma \in \Gamma$, let $\chi_\gamma : \wh{\Gamma} \to \T$ be the associated character, and define the multiplication operator $\Lambda_{\chi_\gamma}$ on $L^2(\wh{\Gamma})$ as $\Lambda_{\chi_\gamma} \varphi = \chi_\gamma \varphi$ for $\varphi \in L^2(\wh{\Gamma})$. Then the map
$$
\Lambda : L^\infty(\wh{\Gamma}) \ni \chi_\gamma \mapsto \Lambda_{\chi_\gamma} \in \B(L^2(\wh{\Gamma}))
$$
is a $*$-homomorphism, i.e. a Banach algebra isomorphism that preserves the involution given by complex conjugation. This means that we may represent the von Neumann algebra $L^\infty(\wh{\Gamma})$ as the Banach subalgebra of $\B(L^2(\wh{\Gamma}))$ generated by the family of operators $\{\Lambda_{\chi_\gamma}\}_{\gamma \in \Gamma}$. We note that this implies in particular that the operator analog of characteristic functions of measurable sets are orthogonal projections. On the other hand, if 
$$
\F : \ell_2(\Gamma) \ni \delta_\gamma \mapsto \chi_\gamma \in L^2(\wh{\Gamma})
$$
denotes the classical Fourier transform, it is easily checked that $\lambda(\gamma) = \F^{-1}\Lambda_{\chi_\gamma}\F$.
Since $\F$ is a unitary isomorphism, we can deduce the well known fact that $L^\infty(\wh{\Gamma}) = \vn(\Gamma)$ in the sense that they are $*$-homomorphic via the map $\Lambda$ and the conjugation with the Fourier transform. This provides the relation
$$
[\varphi,\psi]^\textnormal{op} = \F^{-1} \Lambda_{[\varphi,\psi]} \F
$$
and allows to obtain the results of \cite{HSWW10} on bounds of the function $[\psi,\psi]$ in terms of the operator bounds of Theorem A. 

We refer to \cite{KadisonRingrose83, PisierXu03} for a more detailed exposition of abelian von Neumann algebras and their $L^p$ spaces in the terminology of the noncommutative theory.

\subsection{Left action of a discrete subgroup} \label{Subssec:leftaction}

Let $\G$ be a locally compact group, and $\Gamma$ a discrete countable subgroup of $\G$ acting from the left on $\G$ by
$$
L_\gamma (g) := \gamma g .
$$
The map $L: \Gamma \times \G \to \G$ given by $L(\gamma,g) = L_\gamma (g)$ is an action as defined in Section \ref{sec:NCZaktransform},
with Jacobian identically 1, considering in $\G$ its left-invariant Haar measure $\mu$. Therefore, we can consider the operator
$T_L$ as in (\ref{eq:repZak}) given by
\begin{equation} \label{eq:4.1.1}
T_L(\gamma)f (g) := f(\gamma^{-1} g)\,, \quad g\in \G\,, \ f\in L^2(\G, d\mu)\,.
\end{equation}
We can also consider the Zak transform associated to $L$, as in (\ref{eq:generalizedZak}), which is given by
\begin{equation}  \label{eq:4.1.2}
Z_L[\psi](g) := \sum_{\gamma\in \Gamma} \psi(\gamma^{-1}g) \lr(\gamma)^*\,, \quad g\in \G\,, \ \psi\in L^2(\G, d\mu)\,.
\end{equation}
In order to be able to apply Theorem B in this situation we need to make sure that $L$ has a measurable tiling
set.

\

Given a group $\G$ and a subgroup $M$ of $\G$, a \emph{transversal} of $\G$ by the right cosets of $M \backslash \G=
\{Mg : g \in \G\}$ is a subset $C$ of $\G$ such that $C\cap Mg$ is a set with only one element for each $g\in \G$ (i.e $C$ has
one and only one representative of each class). It is clear that to each transversal $C$ there corresponds a unique \emph{cross-section}
$q: M \backslash \G \rightarrow C$ such that  $\pi \circ q = Id_{M\backslash \G},$ where $\pi : \G \rightarrow M\backslash \G$ is the
canonical quotient map, $\pi(g)=Mg.$

Several papers in the 1960's were devoted to find conditions on $\G$ and $M$, so that a transversal $C$ can be chosen to be
measurable. It is proven in \cite{FelGree68} that if $\G$ is a locally compact group and $M$ is a closed metrizable subgroup, then
there is a measurable transversal $C$ for the left cosets of $M$. Since the proof also works for right cosets, in our situation we have
existence of a measurable transversal $C$. It turns out that this transversal $C$ is a tiling set, that is
$$
\G = \bigcup_{\gamma \in \Gamma} \gamma C\,, \quad \gamma_1 C \cap \gamma_2 C = \emptyset\, \ \mbox{if} \ \gamma_1 \neq \gamma_2\,.
$$
It is clear that $\bigcup_{\gamma \in \Gamma} \gamma C\ \subset \G$. It is worthwhile to write the proof of
$\G\subset \bigcup_{\gamma \in \Gamma} \gamma C $ to exhibit the reason why we need to take a transversal of the right cosets of $\G$ by
$\Gamma$ and not the left cosets. If $g \in \G$, consider $\Gamma g \in \Gamma \backslash \G$ and let $c = q(\Gamma g) \in C,$ where
$q$ is the cross-section associated to $C$. Then $\Gamma g = \pi \circ q (\Gamma g) = \pi(c) = \Gamma c.$ Thus, $g\in \Gamma c$, which
implies $g = \gamma c$ for some $\gamma \in \Gamma.$ Thus $g\in \gamma C\,.$ The disjointness property follows from the fact that
$C$ is a transversal of $\G$ by the right cosets of $\Gamma$.

Thus, we have the following corollary to Theorem B.

\begin{cor}\label{cor:dualintG}
Let $\G$ be a locally compact group with the left Haar measure $\mu$ and
$\Gamma$ a discrete countable subgroup of $\G$. Then the unitary representation $T_L$ on $L^2(\G)$ associated to the left action
 $L$ of $\Gamma$ on $\G$ given by (\ref{eq:4.1.1}) is dual integrable and the bracket map is given by
\begin{equation}\label{eq:bracketG}
[\psi_1, \psi_2] = \int_{C} Z_L[\psi_1](g)Z_L[\psi_2](g)^* d\mu(g)\,, \quad \psi_1, \psi_2 \in L^2(\G)\,.
\end{equation}
where $Z_L$ is the Zak transform (\ref{eq:4.1.2}) and $C$ is a measurable transversal.
\end{cor}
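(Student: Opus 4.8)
The plan is to obtain Corollary \ref{cor:dualintG} as a direct specialization of Theorem B to the action $\act = L$. Accordingly, the whole proof reduces to checking that $L$ satisfies the two hypotheses of Theorem B — quasi-$\Gamma$-invariance and the tiling property — and then reading off its conclusion in the present notation.

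First I would record that $L$ is a measurable action of $\Gamma$ on $(\G,\mu)$ in the sense of Section \ref{sec:NCZaktransform}: each $L_\gamma$ is a homeomorphism (hence Borel) of $\G$, one has $L_\gamma \circ L_{\gamma'} = L_{\gamma\gamma'}$, and $L_\id$ is the identity. Since $\mu$ is a left Haar measure, $\mu(L_\gamma(E)) = \mu(\gamma E) = \mu(E)$ for every Borel $E \subset \G$, so $L$ is quasi-$\Gamma$-invariant with Jacobian $J_L(\gamma,\cdot) \equiv 1$, and the chain rule (\ref{Chain-rule}) holds trivially. Plugging $J_L \equiv 1$ into (\ref{eq:repZak}) and (\ref{eq:generalizedZak}) recovers exactly the operator $T_L$ of (\ref{eq:4.1.1}) and the Zak transform $Z_L$ of (\ref{eq:4.1.2}), so no translation between notations is needed.

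Next I would verify the tiling property with tiling set $C$. Since $\Gamma$ is discrete and countable it is a closed metrizable subgroup of $\G$, so by \cite{FelGree68} there is a $\mu$-measurable transversal $C$ for the right cosets $\Gamma \backslash \G$ (the cited argument, stated for left cosets, applies verbatim to right cosets, as the discussion preceding the corollary explains). Using the associated cross-section $q$, any $g \in \G$ can be written $g = \gamma c$ with $c = q(\Gamma g) \in C$ and $\gamma \in \Gamma$, so $\G = \bigcup_{\gamma \in \Gamma} L_\gamma(C)$; disjointness of the sets $L_\gamma(C) = \gamma C$ is immediate from the fact that $C$ meets each right coset $\Gamma g$ in a single point. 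Hence $\{L_\gamma(C)\}_{\gamma \in \Gamma}$ is a disjoint covering of $\G$ (here even exactly, not merely up to a null set), so $L$ has the tiling property.

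With both hypotheses verified, Theorem B applies directly: part i) gives that $Z_L$ is an isometry with the stated quasi-periodicity, and part ii) gives that $T_L$ is dual integrable with bracket $[\psi_1,\psi_2] = \int_C Z_L[\psi_1](g)\, Z_L[\psi_2](g)^*\, d\mu(g)$, which is precisely (\ref{eq:bracketG}). The only non-formal ingredient is the existence of a measurable transversal, and that is exactly what the selection theorem of \cite{FelGree68} supplies; everything else is bookkeeping and an instance of Theorem B, so I do not expect any serious obstacle here.
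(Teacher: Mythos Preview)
Your proposal is correct and follows essentially the same approach as the paper: the discussion preceding the corollary verifies that $L$ is a quasi-$\Gamma$-invariant action with Jacobian $1$, invokes \cite{FelGree68} to obtain a measurable transversal $C$ for the right cosets $\Gamma\backslash\G$, checks that $C$ is a tiling set, and then states the corollary as an immediate instance of Theorem B. Your write-up reproduces exactly these steps.
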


\begin{rem}
One could prove the dual integrability of the unitary representation $T_L$ given by (\ref{eq:4.1.1}) without using Theorem B.
It is a consequence of a classical result proved in \cite[Chapt. II, \S 9]{Weil65} (see also e.g. \cite[Th. 2.49]{Folland95}) that, in our situation, there exist a right quasi-invariant Radon measure $\nu$ on $\Gamma \backslash \G$ satisfying
$$
\nu(E g) = \Delta(g) \nu(E) \quad \forall \ E \in \mathcal{B}(\Gamma \backslash \G)
$$
where $\Delta: \G \to \R^+$ is the modular function of $(\G,\mu)$, such that for all $\psi \in \mathcal{C}_c(\G)$
the following periodization formula holds:
\begin{equation}\label{eq:WeilPeriodization}
\int_\G \psi(g) d\mu(g) =  \int_{\Gamma \backslash \G} \sum_{\gamma \in \Gamma} \psi(\gamma^{-1} g) d\nu(\Gamma g) .
\end{equation}
This periodization formula is all we need to prove that the unitary representation $T_L$ on $L^2(\G)$ given by (\ref{eq:4.1.1}) is
dual integrable. Moreover, formula (\ref{eq:bracketG}) holds with $C$ replaced by $\Gamma \backslash \G$ and $d\mu(g)$ replaced
by $d\nu(\Gamma g).$
\end{rem}

For integer translations $\Z^n$ over $\Rn$, this approach of a periodization in the spatial variables instead of the Fourier variables provides a different way of writing the bracket with respect to (\ref{eq:integertranslates}). The previous computation indeed reads
\begin{eqnarray*}
\langle \psi_1, T_k \psi_2 \rangle_{L^2(\Rn)} & = & \int_\Rn \psi_1(x) \ol{\psi_2(x - k)} dx\\
& = & \int_\Tn \sum_{m \in \Zn} \psi_1(m + h) \ol{\psi_2(m + h - k)} dh\\
& = & \int_\Tn \sum_{m \in \Zn} \psi_1^h(m) \ol{\psi_2^h(m - k)} dh = \int_\Tn \langle \psi_1^h, T_k \psi_2^h\rangle_{\ell_2(\Zn)} dh
\end{eqnarray*}
where $\psi^h(x) = \psi(x + h)$. Since by Plancherel theorem on $\Zn$
$$
\langle \psi_1^h, T_k \psi_2^h\rangle_{\ell_2(\Zn)} = \int_\Tn \F_\Zn\psi_1^h(\alpha) \ol{\F_\Zn\psi_2^h(\alpha)} e^{2\pi i \alpha k} d\alpha
$$
where
$$
\F_\Zn\psi^h(\alpha) := \sum_{k_1 \in \Zn} \psi(k_1+h) e^{- 2 \pi i k_1 \alpha} = Z\psi(h,\alpha)
$$
and $Z\psi$ is the ordinary Zak transform, we get
\begin{eqnarray*}
\langle \psi_1, T_k \psi_2 \rangle_{L^2(\Rn)} & = & \int_\Tn \int_\Tn Z\psi_1(h,\alpha) \ol{Z\psi_2(h,\alpha)} e^{2\pi i \alpha k}   d\alpha dh\\
& := & \int_\Tn \, [\psi_1, \psi_2](\alpha) e^{2\pi i \alpha k} \,d\alpha\,.
\end{eqnarray*}
An explicit expression obtained with this approach for the bracket map is then
\begin{equation}\label{eq:integertranslates2}
[\psi_1 , \psi_2](\alpha) = \int_\Tn \, Z\psi_1(h,\alpha) \, \ol{Z\psi_2(h,\alpha)} \, dh
\end{equation}
which coincides with (\ref{eq:integertranslates}) by $L^1$ uniqueness theorem of Fourier coefficients.

\subsection{Semidirect products} \label{Subsec:semidirecproducts}

Let $A$ be a locally compact group with left Haar measure $d\varsigma(x)$, $\G$ a locally compact group with left Haar measure $d\mu(g)$, and
$\act: \G \to \textnormal{Aut}(A)$ a measurable action of $\G$ on $A$. The semidirect product $S := A \ract \G$ is the topological group
endowed with the product topology and characterized by the composition law
\begin{equation}\label{eq:semidirectcomp}
s \bullet s' = (x,g)\bullet(x',g') = (x\,\act_g(x'), gg')
\end{equation}
Let us denote with $\varsigma_g$ the measure on $A$ given by
$$
\varsigma_g(E) = \varsigma(\act_g(E)) \ \ \forall \ E \in \mathcal{B}(A)
$$
and assume this measure is absolutely continuous with respect to $\varsigma$ with positive Radon-Nikodym derivative
$$
J_\act(g) = \frac{d\varsigma_g}{d\varsigma} .
$$
Observe that $J_\act$ is a homomorphism $J_\act : \G \to \R^+$, and the left Haar measure on $A \ract G$ is then given by $ds = J_\act(g)^{-1}d\varsigma(x)d\mu(g)$.

Two natural representations on $L^2(A)$ of $A$ and $\G$ are given respectively by the left regular representation of $A$ and the unitary representation associated with the action $\act$: for $x, x' \in A$, $g \in \G$ and $\psi \in L^2(A)$
\begin{equation}\label{eq:operators}
\lr_A(x)\psi(x') := \psi(x^{-1} x') \ , \quad T_\act(g)\psi(x) := J_\act(g)^{-\frac12} \psi(\act_{g^{-1}}(x)) .
\end{equation}

\begin{lem} \label{lem:Conmuting}
For the representations given in (\ref{eq:operators}) we have the following relation:
$$ T_\act (g) \lr_A(x) = \lr_A(\act_g(x)) T_\act (g)\,, \qquad g\in \G , x\in A\,.$$
\end{lem}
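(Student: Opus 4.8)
The plan is to verify the identity by a direct pointwise computation, evaluating both operators on an arbitrary $\psi \in L^2(A)$ at an arbitrary point $x' \in A$. The only structural facts needed are that each $\act_g$ is a group automorphism of $A$ — so it distributes over the multiplication and the inversion of $A$ — and that $g \mapsto \act_g$ is a homomorphism of $\G$ into $\textnormal{Aut}(A)$, which is exactly what makes $\act$ an action of $\G$ on $A$; in particular $\act_{g^{-1}} = \act_g^{-1}$ and $\act_{g^{-1}g}$ is the identity automorphism of $A$. (This homomorphism property, together with the multiplicativity of $J_\act$, is also what forces $T_\act$ in (\ref{eq:operators}) to be a genuine representation, as recorded above.)

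First I would unwind the left-hand side using the definitions in (\ref{eq:operators}):
$$
\big(T_\act(g)\lr_A(x)\psi\big)(x') = J_\act(g)^{-\frac12}\,\big(\lr_A(x)\psi\big)\big(\act_{g^{-1}}(x')\big) = J_\act(g)^{-\frac12}\,\psi\big(x^{-1}\act_{g^{-1}}(x')\big).
$$
Then I would unwind the right-hand side in the same manner:
$$
\big(\lr_A(\act_g(x))\,T_\act(g)\psi\big)(x') = \big(T_\act(g)\psi\big)\big(\act_g(x)^{-1}x'\big) = J_\act(g)^{-\frac12}\,\psi\big(\act_{g^{-1}}\big(\act_g(x)^{-1}x'\big)\big).
$$
The last step is to simplify the argument of $\psi$ on the right: since $\act_{g^{-1}}$ is a group automorphism of $A$,
$$
\act_{g^{-1}}\big(\act_g(x)^{-1}x'\big) = \big(\act_{g^{-1}}(\act_g(x))\big)^{-1}\,\act_{g^{-1}}(x') = \big(\act_{g^{-1}g}(x)\big)^{-1}\,\act_{g^{-1}}(x') = x^{-1}\act_{g^{-1}}(x').
$$
Hence the two expressions agree for every $\psi$ and every $x'$, which is precisely the claimed operator identity.

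There is no genuine obstacle here; the computation is routine. The one point deserving a little care is the order of operations in the last display: one must first apply $\act_{g^{-1}}$ to the whole product $\act_g(x)^{-1}x'$ and only afterwards split it, invoking the automorphism property of $\act_{g^{-1}}$ on $A$ together with the homomorphism property $\act_{g^{-1}}\act_g = \act_{g^{-1}g}$ coming from the action of $\G$ on $A$. Note also that the Jacobian factor $J_\act(g)^{-\frac12}$ occurs symmetrically on both sides and plays no role beyond being carried along unchanged.
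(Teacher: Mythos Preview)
Your proof is correct and follows essentially the same approach as the paper's own proof: both compute $(T_\act(g)\lr_A(x)\psi)(x')$ and $(\lr_A(\act_g(x))T_\act(g)\psi)(x')$ directly from the definitions in (\ref{eq:operators}) and then identify the arguments of $\psi$ using that $\act_{g^{-1}}$ is an automorphism of $A$ with $\act_{g^{-1}}\circ\act_g$ the identity. The only difference is that you spell out the simplification $\act_{g^{-1}}(\act_g(x)^{-1}x') = x^{-1}\act_{g^{-1}}(x')$ in detail, whereas the paper records it in a single line.
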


\begin{proof}
For $\psi \in L^2(A)$ and $x' \in A$ we have,
\begin{align*}
(T_\act (g) \lr_A(x) \psi)(x') & = J_\act (g)^{-\frac12} (\lr_A (x) \psi)(\act_{g^{-1}}(x')) \\
& = J_\act (g)^{-\frac12} \psi(x^{-1}\act_{g^{-1}}(x'))\,.
\end{align*}
On the other hand,
\begin{align*}
(\lr_A (\act_g(x)) T_\act (g) \psi)(x') & = (T_\act(g)\psi) (\act_g(x)^{-1} x') \\
& = J_\act (g)^{-\frac12} \psi(\act_{g^{-1}} (\act_g(x)^{-1} x'))\\
& = J_\act (g)^{-\frac12} \psi(x^{-1}\act_{g^{-1}}(x'))\,.\qedhere
\end{align*}

\end{proof}

\subsubsection{The quasiregular representation} \label{Subsec:quasiregular}

In terms of the representations (\ref{eq:operators}), one can define the so-called quasiregular representation $R(x,g) := \lr_A(x)T_\act(g) : S \to U(L^2(A))$, acting on $\psi \in L^2(A)$ as
\begin{equation}\label{eq:quasiregular}
R(x,g)\psi(x') = J_\act(g)^{-\frac12}\psi(\act_{g^{-1}}(x^{-1} x'))
\end{equation}
noting that, by Lemma \ref{lem:Conmuting}, $R(x,g)$ is a unitary representation of $S$
on $L^2(A).$

Consider $\widetilde \act$ the action of $S := A \ract \G$ on $A$ given by
\begin{equation}\label{eq:quairegularaction}
\widetilde \act_{(x,g)} (x') = x \act_g (x')\,, \qquad  x, x' \in A, \, g \in \G\,.
\end{equation}
It is easy to prove that $\widetilde \act$ is a measurable action of $S$ on the locally compact group $A$. Moreover, the unitary representation
$T_{\widetilde \act}$ of $S$ on $L^2(A)$, as given in (\ref{eq:repZak}), coincides with the quasiregular representation $R(x,g)$ defined in
(\ref{eq:quasiregular}). To prove this result choose $(x, g)\in S, x' \in A, \psi \in L^2(A)$ and apply the definition of $T_{\widetilde \act}$ to
write
$$
T_{\widetilde \act} (x,g) \psi (x') = J_\act (g)^{-\frac12} \psi(\widetilde\act_{(x,g)^{-1}}(x'))
$$
where we have used that $J_{\widetilde \act}(x,g) = J_\act(g)$.
The inverse of $(x,g)$ in the semidirect product $S$ is $(\act_{g^{-1}}(x^{-1}), g^{-1})$, and hence $\widetilde \act_{(x,g)^{-1}}(x') =
\act_{g^{-1}}(x^{-1})\, \act_{g^{-1}}(x') = \act_{g^{-1}}(x^{-1}x'). $ This shows the result, since
\begin{equation}\label{eq:quaasireg}
T_{\widetilde \act} (x,g) \psi (x') = J_\act (g)^{-\frac12} \psi(\act_{g^{-1}}(x^{-1}x'))= R(x,g)\psi (x')\,.
\end{equation}
In order to be able to apply Theorem B, we choose $B$ a discrete countable subgroup of $A$ and $\Gamma$ a discrete
countable subgroup of $\G$ in such a way that the action $\act$ when restricted to $\Gamma$ is an action on $B$. Then $\Sigma := B \ract \Gamma$
is a subgroup of $S = A \ract \G$. Assume that $\widetilde \act : \Sigma \to \textnormal{Aut}(A)$ given by $\widetilde \act_{(b,\gamma)}(y) =
b \,\act_\gamma(y)$ has the tiling property on $A$. That is, there exists a measurable set $C \subset A$ such that
$$
\Big\{ \widetilde\act_{(b,\gamma)} (C)\Big\}_{(b,\gamma)\in \Sigma} = \Big\{b\,\act_\gamma (C)\Big\}_{b\in B, \gamma \in \Gamma}
$$
is an almost everywhere covering partition of $A.$ In this situation, the quasiregular representation
$$
R(b,\gamma) \psi (x) = J_\act (\gamma)^{-\frac12} \psi (\act_{\gamma^{-1}}(b^{-1}x))
$$
is dual integrable with bracket
$$
[\psi_1 , \psi_2] = \int_C Z_R[\psi_1] (x)\, Z_R[\psi_2] (x)^*\,d\varsigma(x)\,, \quad  \psi_1, \psi_2 \in L^2(A)\,,
$$
where
$$
Z_R[\psi](x) = \sum_{\gamma \in \Gamma} \sum_{b\in B} R(b, \gamma) \psi(x)\, \lr(b,\gamma)^*
$$
is the Zak transform from $L^2(A, \varsigma)$ to $L^2(C, L^2(\vn(B \ract \Gamma))$.

\paragraph{Euclidean motion groups $E(n) = \R^n \ract O(n)$}\ \\
\noindent
For $x \in \R^n = A$ and $r \in O(n) = \G$ we can define the composition law on $\R^n \times O(n)$ by
$$
(x,r)\bullet(x',r') = (x + rx', rr') .
$$
This is a semidirect product with $\act: O(n) \to \textnormal{Aut}(\R^n)$ given by $\act(r)(x) = rx := \act_r (x)$, and $J_\act = 1$. Let $B = L \Z^n$
be a full rank lattice of $\R^n$, that is $L$ is a non-singular $n\times n$ matrix,
and $\Gamma$ be a finite subgroup of $O(n)$ such that $\act_\gamma(B) = B$ for all $\gamma \in \Gamma$.
Groups of the type $L\Z^n \ract \Gamma$ are called crystallographic groups.
It is proved in  \cite{Manning12} (Proposition 5) that there exists a compact set $C \subset \R^n$ such that $\{b+rC\}_{b\in B, r\in \Gamma}$ is
an almost everywhere partition of $\R^n$, in such a way that
$$\Big\{b + \bigcup_{r\in \Gamma} rC\Big\}_{b \in B}$$
is also an almost everywhere partition of $\R^n$.

Our Theorem B gives that the quasiregular representation $R(b,r)$
of the semidirect product $L\Z^n \ract \Gamma$ on $L^2(\R^n)$ given by
$$
R(b,r)\psi (x) = \psi(r^{-1}(x-b))\, \qquad  x\in \R^n,\, b\in B, \, r\in \Gamma,
$$
is dual integrable, and the bracket is given in terms of the Zak transform
$$
Z_R[\psi](x) = \sum_{r\in \Gamma} \sum_{b\in L\Z^n} \psi(r^{-1} (x-b)) \lr(b, r)^*\,.
$$
A different approach for crystallographic groups, leading to an alternative notion of bracket map, but also operator valued, has
been developed in \cite{Manning12}.

\

As a simple, but instructive, example consider $B = \Z^2 \subset \R^2$ and $\Gamma$ the group of symmetries of a square. The group $\Gamma$
is generated by the rotation $r$ and the reflection $s$ given in matrix notation by
$$
r=\left(\begin{array}{cc}
    0 & -1\\
    1 & 0
\end{array}\right), \qquad
s=\left(\begin{array}{cc}
    0 & 1\\
    1 & 0
\end{array}\right).
$$
Therefore, $\Gamma = \{I, r, r^2, r^3, s, rs, r^2s, r^3s\}$ with $r^4=I, s^2=I,$ and $rs=sr^3.$ In this case a compact set $C \subset \R^2$
is
$$
C =\{ (x,y)\in \R^2 : 0 \leq x \leq \frac12\,, 0 \leq y \leq x\}\,.
$$
Observe that $\{\gamma C\}_{\gamma \in \Gamma}$ tiles the square $K = [-1/2, 1/2]^2$, and $K$ is a tiling set of $\R^2$ by integer translations 
(see Figure 1).

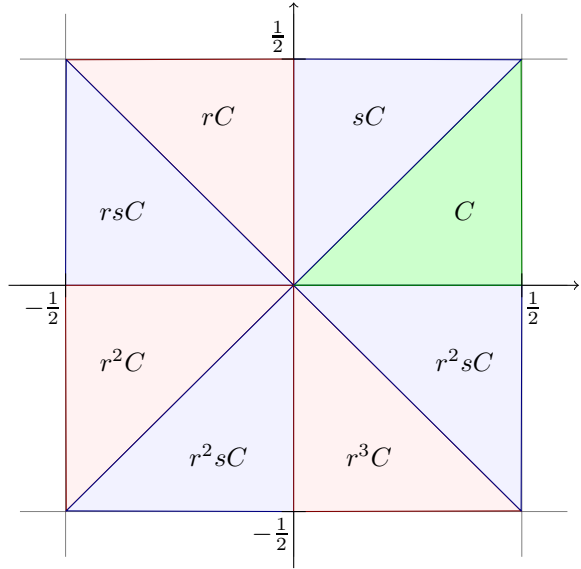
\begin{figure} \label{figure1}
\centering
\begin{tikzpicture}[scale=3]
 \draw[step=1cm, gray, very thin] (-1.2,-1.2) grid (1.2,1.2);
 \draw[->] (-1.25,0) -- (1.25,0) coordinate (x axis);
 \draw[->] (0,-1.25) -- (0,1.25) coordinate (y axis);

 \draw[green!50!black, fill=green!20] (0,0) -- (1,0) -- (45:1.41cm) -- (0,0);
 \node at (0.75,0.33) {$C$};

 \draw[blue!50!black, fill=blue!5] (0,0) -- (0,1) -- (45:1.41cm) -- (0,0);
 \node at (0.33,0.75) {$sC$};

 \draw[red!50!black, fill=red!5] (0,0) -- (0,1) -- (135:1.41cm) -- (0,0);
 \node at (-0.33,0.75) {$rC$};

 \draw[blue!50!black, fill=blue!5] (0,0) -- (-1,0) -- (135:1.41cm) -- (0,0);
 \node at (-0.75,0.33) {$rsC$};

 \draw[red!50!black, fill=red!5] (0,0) -- (-1,0) -- (225:1.41cm) -- (0,0);
 \node at (-0.75,-0.33) {$r^2C$};

 \draw[blue!50!black, fill=blue!5] (0,0) -- (0,-1) -- (225:1.41cm) -- (0,0);
 \node at (-0.33,-0.75) {$r^2sC$};

 \draw[red!50!black, fill=red!5] (0,0) -- (0,-1) -- (-45:1.41cm) -- (0,0);
 \node at (0.33,-0.75) {$r^3C$};

 \draw[blue!50!black, fill=blue!5] (0,0) -- (1,0) -- (-45:1.41cm) -- (0,0);
 \node at (0.75,-0.33) {$r^2sC$};

\draw[green!50!black] (0,0) -- (1,0);

 \draw (-1cm,1.5pt) -- (-1cm,-1.5pt);
 \node at (-1.1,-0.1) {$-\frac12$};
 \draw (1cm,1.5pt) -- (1cm,-1.5pt);
 \node at (1.05,-0.1) {$\frac12$};
 \draw (1.5pt,-1cm) -- (-1.5pt,-1cm);
 \node at (-0.1,-1.1) {$-\frac12$};
 \draw (1.5pt,1cm) -- (-1.5pt,1cm);
 \node at (-0.07,1.1) {$\frac12$};
\end{tikzpicture}
\caption{Tiling with rotations and reflections.}
\end{figure}

\paragraph{Heisenberg motion groups $M(n) = \Heis_n \ract U(n)$}\ \\
\noindent
The Heisenberg group $\Heis_n$ is the group $(\C^n \times \R, \circ)$ with composition law
%\footnote{Note that $\Heis^{\textnormal{pol}}_n$ is homomorphic to $\Heis_n$ via the map $h : \R^n\times\R \times \R^n \to \C^n \times \R$ given by $h(q,t,p) = (q + ip, t + \frac12 pq)$.}
$$
(z,t)\circ(z',t') = (z + z', t + t' - \frac12 \Im(z'.z))
$$
where $z.w$ stands for the product in $\C^n$. For $(z,t) \in \C^n \times \R$ and $u \in U(n)$ we can then define the composition law on $\Heis_n \times U(n)$
$$
(z,t,u)\bullet(z',t',u') = (z + uz', t + t' - \frac12 \Im(uz'.z), uu') .
$$
This is a semidirect product with $\act: U(n) \to \textnormal{Aut}(\Heis^n)$ given by $\act_u (z,t) = (uz,t)$. The previous analysis can be applied in this case considering a discrete subgroup $B$ of $\Heis_n$ such as $B = (\Z^{2n} \times \frac12 \Z, \circ)$ and $\Gamma$ a discrete subgroup of $U(n)$ such that $\sigma_\gamma$ are automorphisms of $B$ for $\gamma \in \Gamma$.

\subsubsection{The affine representation} \label{Subsec:affine-representation}
Given two locally compact groups $A$ and $\G$, one can construct another representation, that we will call the \emph{affine} representation, which is used in the theory of composite wavelets (see e.g. \cite{WeissWilson01, LaugersenWeaverWeissWilson02, LabateWeiss10}). For $\lr_A$ and $T_\act$ as in (\ref{eq:operators}), such representation is given by $\Pi(x,g):= T_\act(g)\lr_A(x)$ and acts on $\psi \in L^2(A)$ as
\begin{equation}\label{eq:affine}
\Pi(x,g)\psi(x') = J_\act(g)^{-\frac12}\psi(x^{-1}\act_{g^{-1}}(x')).
\end{equation}
Using Lemma \ref{lem:Conmuting} we deduce
\begin{equation}\label{eq:affine-representation}
\Pi(x,g)\, \Pi(x',g') = \Pi (\act_{g'^{-1}}(x)\, x' , gg').
\end{equation}
An alternative way to prove (\ref{eq:affine-representation}) is to observe that $\Pi(x,g) = R(\act_g(x), g)$ and use that the quasiregular
representation $R$ is a representation of $S = A \ract \G$ on $L^2(A).$ We note, in passing, that $\Pi(x,g) = R(x^{-1}, g^{-1})^*$, whose proof is left to the reader.

Therefore, $\Pi$ is the representation on $L^2(A)$ of a group $W$ which coincides with $A \times \G$ as a topological space,
and is endowed with the composition law
\begin{equation}\label{eq:affinecomp}
(x,g)\odot (x',g') = (\act_{g'^{-1}}(x)\,x', gg') .
\end{equation}
We note that even if the law (\ref{eq:affinecomp}) is different from the law (\ref{eq:semidirectcomp}), there is a
topological group homomorphism $h : (W,\odot) \to (S,\bullet)$ given by
$$
h(x,g) = (\act_{g}(x), g) .
$$
Indeed
\begin{align*}
h(x,g)&\bullet h(x',g') = (\act_{g}(x), g) \bullet (\act_{g'}(x'), g') = (\act_{g}(x)\,\act_{g}(\act_{g'}(x')), gg')\\
& = (\act_{gg'}( (\act_{g'^{-1}}(x) \,x'), gg') = h(\act_{g'^{-1}}(x)\,x', gg') = h\big((x,g)\odot (x',g')\big) .
\end{align*}
Consider now the action $\widetilde \act$ of $W$ on $A$ given by
\begin{equation} \label{eq:action-new}
\widetilde \act_{(x,g)} (y) = \act_g (xy)\,, \qquad  x, y \in A, \, g\in \G\,.
\end{equation}
Since the inverse of $(x,g)$ with the operation $\odot$ is $(\act_g(x^{-1}), g^{-1}),$ the representation $T_{\widetilde \act}$
associated to $\widetilde \act$ as in (\ref{eq:repZak}) reads
\begin{align*}
(T_{\widetilde \act} (x,g) \psi)(y)  & = J_\act(g)^{-\frac12} \psi(\widetilde \act_{(x,g)^{-1}}(y)) =
     J_\act(g)^{-\frac12} \psi(\act_{g^{-1}} (\act_g(x^{-1})\,y) \\
& = J_\act(g)^{-\frac12}\psi(x^{-1}\act_{g^{-1}}(y)) = \Pi(x,g)\psi(y).
\end{align*}
Therefore, $\Pi(x,g)$ coincides with the unitary representation $T_{\widetilde \act}$ (as in \ref{eq:repZak}) of the group
$W$ on $L^2(A)$ induced by the action $\widetilde \act$ given by (\ref{eq:action-new}).

Choose now $B$ a discrete countable subgroup of $A$ and $\Gamma$ a discrete countable subgroup of $\G$, in such a way that
the action $\act$ when restricted to $\Gamma$ is an action on $B$. Then $B\times \Gamma$, endowed with the composition law
$\odot$ given in (\ref{eq:affinecomp}), is a subgroup of $(W, \odot).$  If we assume that the action $\widetilde \act$ given in
(\ref{eq:action-new}) has the tiling property we could apply Theorem B to deduce that $\Pi(x,g)$ is dual integrable
and the bracket map is given in terms of the Zak transform, which in this case reads
$$
Z_\Pi [\psi](x) = \sum_{b \in B}\sum_{\gamma \in \Gamma} J_\act (\gamma)^{-\frac12} \psi (b^{-1} \act_{\gamma^{-1}}(x)) \lr(b,\gamma)^*\,.
$$
The tiling condition is satisfy if there exist a measurable set $C \subset A$ such that $\{\widetilde \act_{(b,\gamma)} (C)\}_{b\in B, \gamma\in \Gamma}$
is an almost everywhere partition of $A$. Since
$$
\widetilde \act_{(b,\gamma)}(C) = \act_\gamma (b\,C) = \act_\gamma(b)\,\act_\gamma(C)
$$
and $\act_\gamma \in \textnormal{Aut}(B),$ this is equivalent to require that $\{b\,\act_\gamma (C)\}_{(b,\gamma) \in B \times \Gamma}$
is an almost everywhere partition of $A$, as in the case of the quasiregular representation.

%Again the representation (\ref{eq:affine}) has the same structure as (\ref{eq:repZak}), so whenever there exists a countable discrete closed subgroup such that the underlying action has the tiling property, then $\Pi$ is dual integrable and its bracket map can be written in the form (\ref{eq:ZakBracket}).
%
%\paragraph{Euclidean translations and shears $SH = (\R^2 \times \R, \odot)$}
%\noindent
%For $x \in \R^2$ and $s \in \R$ we can define the composition law on $\R^2 \times \R$
%$$
%(x,s)\odot(x',s') = (S(s')^{-1}x + x', s + s') .
%$$
%where
%$$
%S(s) = \binom{\,1 \ \ s\,}{\,0 \ \ 1\,}.
%$$
% This is a semidirect product with $\sigma: \R \to \textnormal{Aut}(\R^2)$ given by $\sigma(s)\cdot x = S(s)x$.

\vspace{20pt}
\noindent
\textsc{Davide Barbieri}:
Departamento de Matem\'aticas, Universidad Aut\'onoma de Madrid, 28049 Madrid, Spain, \emph{davide.barbieri@uam.es}
% CAMS, \'Ecole des Hautes \'Etudes en Sciences Sociales, 75244 Paris, France, \emph{davide.barbieri8@gmail.com}

\vspace{4pt}
\noindent
\textsc{Eugenio Hern\'andez}:
Departamento de Matem\'aticas, Universidad Aut\'onoma de Madrid, 28049 Madrid, Spain, \emph{eugenio.hernandez@uam.es}

\vspace{4pt}
\noindent
\textsc{Javier Parcet}:
Instituto de Ciencias Matem\'aticas, CSIC-UAM-UC3M-UCM, 28049 Madrid, Spain, \emph{javier.parcet@icmat.es}

\end{document}